\theoremstyle{plain}
\newtheorem{defi}{Definition}[section]
\newtheorem{thm}{Theorem}[section]
\newtheorem{prop}{Proposition}[section]
\newtheorem{coro}{Corollary}[section]
\newtheorem{rem}{Remark}[section]
\newcommand{\X}{\mathbb{X}}
\newcommand{\N}{\mathbb{N}}
\newcommand{\Z}{\mathbb{Z}}
\newcommand{\R}{\mathbb{R}}
\newcommand{\Q}{\mathbb{Q}}
\newcommand{\F}{\mathbb{F}}
\DeclareMathOperator{\Gl}{Gl}
\DeclareMathOperator{\Pic}{Pic}
\DeclareMathOperator{\Jac}{Jac}
\DeclareMathOperator{\Proj}{Proj}
\DeclareMathOperator{\Spec}{Spec}
\DeclareMathOperator{\Ker}{Ker}
\DeclareMathOperator{\GCD}{gcd}
\DeclareMathOperator{\card}{card}
\begin{document}
\title{Good rings and homogeneous polynomials }
\author{J. Fresnel and M. Matignon}

\maketitle

\begin{abstract} 
In 2011, Khurana, Lam and Wang define the following property.
(*) {\sl A  commutative unital ring $A$ satisfies the property} ``power stable 
range one'' {\sl if for all $a,b\in A$ with $aA+bA=A$ there is an integer 
$N=N(a,b)\geq 1$ and $\lambda=\lambda (a,b) \in A$ such that $b^N+\lambda a\in 
A^\times$, 
the unit group of $A$.} 

 In 2019, Berman and Erman consider rings with the following property 
\newline 
(**) {\sl A commutative unital ring $A$ has enough homogeneous polynomials if 
for any $k\geq 1$ and set $S:=\{p_1,p_2,...,p_k\}$, of 
primitive points in $A^n$ and any $n\geq 2$, there exists an homogeneous polynomial 
$P(X_1,X_2,...,X_n)\in 
A[X_1,X_2,...,X_n]$ with $\deg P\geq 1$  and $P(p_i)\in A^\times$ for $1\leq 
i\leq k$.} 

We show in this article that the two properties (*) and (**) are equivalent and 
we shall call a 
commutative unital ring with these properties a good ring. 

When $A$ is a commutative unital ring  of pictorsion as defined by 
Gabber, Lorenzini and Liu in 2015, we show that $A$ is a good ring. 
  Using a Dedekind domain built by Goldman in 1963, we show that the 
converse is false.

\end{abstract} 

\maketitle

\section{Introduction}

In this paper we consider only commutative and unital  rings. As usual 
$A^\times$ denotes the unit group of $A$ and ring homomorphisms send $1$ to 
$1$. In particular if $A$ is the ring reduced to $\{0\}$ then $A^\times=A$.
In general we follow the notations in \cite{L}. 

The main goal of this paper is to study a class of rings that we will call ``good rings'' and to analyse their relations with some classical or less classical properties. 
\medskip

\noindent
{\bf A. Property ``power stable range one''. Good points and good rings.} In 2011, Khurana, Lam and Wang (\cite{KLW}, Definition 1.1 p. 123) were 
interested in the notion of 
``rings of square stable range one'' which can be seen as an extension of the 
notion ``$n$ is the stable range of a ring'' as defined by Bass in 1964 
(\cite{B}, p. 498).

One says that a ring $A$ satisfies the property ``square stable range one'' if 
for all $a,b\in A$ with $aA+bA=A$, there is $\lambda \in A$ such that 
$b^2+\lambda a\in A^\times$, where $A^\times$.

In the epilogue of their paper (\cite{KLW}, p. 141) they give a generalization 
of the 
property 
``square stable range one'',
namely

\begin{defi}\label{def.PSR}{\bf Property ``power stable range one''.} A  ring $A$ satisfies the property {\rm power stable range one},
if for all $a,b\in A$ with $aA+bA=A$,
there is an integer $N=N(a,b)\geq 1,\ \lambda =\lambda (a,b) \in A$ with 
$b^N+\lambda a\in 
A^\times$, the unit group of $A$.
\end{defi}

Let us re-interpret this notion in terms of primitive points.

\begin{defi}\label{def.Good}{\bf  Good points and good rings.} Let $A$ be a ring. Recall that a point \newline $p=(x_1,x_2,...,x_n)\in A^n$ 
is 
 {\sl primitive} if $\sum_{1\leq j\leq n} x_{j}A=A$. 
\begin{enumerate}[1)]
\item A primitive point 
$(a,b) \in A^2$ is a {\sl good 
point}  if there is an integer
$N=N(a,b)\geq 1$ and \newline $\lambda=\lambda (a,b) \in A$ with $b^N+\lambda a\in 
A^\times$.
\item 
A ring $A$ is a {\sl good ring} 
if  the primitive points in $A^2$ are good points.
\end{enumerate}
\end{defi}

So, in other words, good rings are those satisfying the power stable range one property. 
\begin{rem}\label{A} The ring $\Z$ of integers is a good ring. Namely, if $(a,b) \in \Z^2$ is a primitive 
point, $b$ is a unit modulo $a\Z$ and as the unit group $\frac{\Z}{a\Z}^\times$ is finite, then $(a,b)$ is a good point.
\end{rem}
\medskip
\noindent
{\bf B. Rings with enough homogeneous polynomials.}
Before going further, we remark that if $A$ is a  ring and 
$p:=(x_1,x_2,...,x_n)\in A^n$
is such that $P(p):=P(x_1,x_2,...,x_n)\in  A^\times$ for some homogeneous 
polynomial
$P(X_1,X_2,...,X_n)\in 
A[X_1,X_2,...,X_n]$ of $\deg P\geq 1$, then $p$ is a primitive point. Reciprocally if 
$p:=(x_1,x_2,...,x_n)\in A^n$ is primitive, then there is 
$W(X_1,X_2,...,X_n):=\sum_{1\leq i\leq n}u_iX_i$ such that \newline $W(p)\in A^\times$. 

Now generalizing this equivalence, we define a new family of rings as suggested 
in \cite{BE}, namely

\begin{defi}\label{def.Enough}{\bf Rings with enough homogeneous polynomials.}
 A ring $A$ has {\rm enough homogeneous polynomials in two 
variables}, (resp. {\rm enough homogeneous polynomials}) if for all  finite set 
 $S:=\{p_1,p_2,...,p_k\}$ of  primitive points in $A^2$ with  
 $\card S:=k\geq 1$, 
(resp.  primitive points in $A^n$ and any $n\geq 2$), there is $P(X_1,X_2)\in 
A[X_1,X_2]$ (resp. $P(X_1,X_2,...,X_n)\in A[X_1,X_2,...,X_n]$) with $P$ 
homogeneous, $\deg P\geq 1$  and $P(p_i)\in A^\times$ for $1\leq i\leq k$ where 
$P(p_i):=P(p_{1,i},p_{2,i},...,p_{n,i})$ and 
$p_i:=(p_{1,i},p_{2,i},...,p_{n,i})\in A^n$.
\end{defi}

A main result is 
\vskip 1mm \vskip 0pt
\noindent {\bf Theorem \ref{thm2.1}.}
{\em 
Let $A$ be a ring. The following 
properties are 
equivalent.
\begin{enumerate}[i)]
\item The ring $A$ is a good ring,
\item the ring $A$ has enough homogeneous polynomials.
\item the ring $A$ has enough homogeneous polynomials in two variables.
\end{enumerate}
}

\begin{rem}\label{B} In the case $A=\Z$, we saw (Remark \ref{A}) that $\Z$ is a good ring. The implication i) implies ii) in Theorem \ref{thm2.1} works by induction on the cardinality of the set of points we want to interpolate contrarily to the proof the ring $\Z$ has enough homogeneous 
polynomials
in \cite{BE}, Theorem 0.1. Let us sketch the steps in their proof. The first point is to show that a field $K$ has enough homogeneous 
polynomials. This follows from the classical avoiding lemma namely that if an ideal of a ring is included 
in a finite union of prime ideals then it is included in one of them. Then they show that the property 
``having enough polynomials'' transfers to the direct product of two rings and deduce that for $a\in \Z-\{0,\pm 1\}$
the ring $\frac{\Z}{a\Z}$ has enough homogeneous polynomials. Now let $S:=\{p_1,p_2,...,p_r\}$ primitive points in 
$\Z^n$. Again the avoiding lemma proves the existence of $P_i\in \Z[X_0,X_1,...,X_n]$ homogeneous and non constant
such that $P_i(p_i)\neq 0$ for $1\leq i\leq r$ and $P_i(p_j)=0$ for $j\neq i$. Let $a:=P_1(p_1)...P_r(p_r)\neq 0 $ and $\overline p_i $ the image of 
$p_i$ in as $\frac{\Z}{a\Z}^n$. As $\frac{\Z}{a\Z}$ has enough homogeneous polynomials there is 
$H\in \frac{\Z}{a\Z}[X_0,X_1,...,X_n]$,  homogeneous and non constant
such that $H(\overline p_i)\in \frac{\Z}{a\Z}^\times$. Then combining the $P_i$ with a lifting of $H$ in 
$\Z[X_0,X_1,...,X_n],$ they get a non constant homogeneous polynomial $Q$ with $Q(p_i)=1$ for all $i$. 
\end{rem}
\noindent
{\bf C. Pictorsion rings.} The last class is related with torsion in Picard groups, namely with (\cite{GLL}, Definition 0.3, p. 1191) we can define 

\begin{defi}\label{def.Pictorsion}{\bf Pictorsion rings.}
A ring $A$ is {\sl a pictorsion ring} if for all ring $B$ which is 
finite over $A$, its Picard group $\Pic (B)$ is 
a torsion group.
\end{defi}

\begin{rem}\label{C} We would like to comment on this notion. Namely, why is it usefull?  Is the ring 
$\Z$ a pictorsion ring?

This notion is usefull in order to have a Noether normalization lemma for families ((\cite{CMPT}, 
\cite{GLL}).

If $R$ is a pictorsion ring then Noether normalization lemma is valid for  projective 
schemes over $\Spec R$. Moreover a ring $R$  is pictorsion if for all equidimensional  projective 
schemes over $\Spec R$ there is a Noether normalization ((\cite{CMPT}, 
\cite{GLL}). 

Now what's about $\Z$ and pictorsion ?

A long time ago, Minkowski using the geometry of numbers proves the finiteness of the class group of 
the integral closure of $\Z$ in a finite  algebraic extension of $\Q$ and more generally the finiteness of the picard
group of a ring which is finite over $\Z$ appears in (\cite{MB}, Theorem 2.3. p. 165) in his geometric proof 
of Rumely's theorem on Skolem problem. So we can say 
that $\Z$ is a ring of 
pictorsion. 

Note there is  a probalistic approach in  \cite{BrE} to Noether normalization lemma for the rings 
$\Z$ or $\F_q[T]$. 
\end{rem}

It is more subtle to express the property of good point with pictorsion, namely
we prove the following caracterization of good points. 
\vskip 1mm \vskip 0pt
\noindent {\bf Theorem \ref{thm3.1}.}
{\em 
Let $A$ be a ring, $(a,b)\in A^2$ a primitive point. 

Let $A[x,y]:=\frac{A[X,Y]}{X(aY-bX)A[X,Y]}$ where $x$ (resp.$y$) is the image 
of $X$ (resp. $Y$) by the natural epimorphism. Moreover $A[x,y]$ is endowed 
with the induced grading.
Let $S(a,b):=\Proj A[x,y]$. The following properties are equivalent.
\begin{enumerate}[i)]
\item  The ${\cal O}_{S(a,b)}(S(a,b))$-module ${\cal O}_{S(a,b)}(1)(S(a,b))$ is 
a torsion element in the Picard group of \newline ${\cal O}_{S(a,b)}(S(a,b)),$
\item  there exists $P(X,Y)\in A[X,Y]$ homogeneous of degree $d\geq 1$ with 
$P(0,1),\ 
P(a,b)\in A^{\times},$
\item   the point $(a,b)\in A^2$ is a good point.
\end{enumerate}
}
 
It follows that the ring $A$ 
is 
a good ring 
if and only if for all primitive point $(a,b)\in A^2$, \newline ${\cal O}_{S(a,b)}(1)(S(a,b))$ is a torsion element in the 
Picard group of ${\cal O}_{S(a,b)}(S(a,b))$.

More specifically, let $A$ be a pictorsion ring.  As ${\cal 
O}_{S(a,b)}(S(a,b))$ is a free rank two 
$A$-module (Proposition \ref{prop3.1}), it follows that ${\cal 
O}_{S(a,b)}(1)(S(a,b))$ is a 
torsion element in $\Pic ({\cal O}_{S(a,b)}(S(a,b)))$. We get
\vskip 1mm \vskip 0pt
\noindent {\bf Corollary \ref{coro3.1}.}
{\em 
Let $A$ be a pictorsion ring, then  $A$ is a good ring.
}
\vskip 1mm \vskip 0pt
A question is to know if there are good rings $A$ with 
$\Pic ({\cal 
O}_{S(a,b)}(S(a,b)))$ is not a torsion group.

The answer uses (\cite{G}, Corollary 2 p. 118), a 1963 paper where Goldman 
shows the existence of a
Dedekind domain $A$ with $\Z[X]\subset A\subset \Q[X]$, $\frac{A}{\mathfrak M}$ 
finite 
for all maximal ideal $\mathfrak M$ and such that its ideal class group isn't a 
torsion group. Such a ring is a good ring but not a pictorsion ring (Proposition \ref{prop4.8}).
\vskip 1mm \vskip 0pt
\noindent {\bf Acknowledgments.}

We would like to thank Dino Lorenzini for indicating us the notion ``n is the 
stable range of a ring'' and for attracting our attention on the first version of 
\cite{BE}. We thank Qing Liu for showing us the subtleties of 
Picard group. Moreover his remarks on an early version of our paper gave us the 
opportunity to rewrite and simplify some proofs in using cohomological tools. We also warmly thank the referee for many useful suggestions which led to improvements in the exposition.

\vskip 1mm \vskip 0pt
\noindent{\bf Outline of the paper.} 

In section 2, we prove in particular the equivalence of the two notions  ``good rings'' 
and ``having 
enough homogeneous polynomials'' (Theorem \eqref{thm2.1}). Moreover we rephrase this
in terms of sections of the $A$-scheme $\mathbb{P}^n_A$. 
We give also some examples of good rings. 

In section 3, we give a geometric characterization of good points and good rings 
in terms of Picard group.

In section 4, in order to help the reader, we begin by a subsection in which we list the results concerning good rings and we postpone the proofs further. 
We discuss the stability of good rings by inductive limit, product, quotient, integral extension, 
localisation, transfert to a polynomial ring. Moreover we give many examples of good rings or  not good rings. 
We also study the links between good rings and pictorsion (sous-section \ref{ssec4.3}).

\section{Invertible values of homogeneous polynomials and primitive points.}

In the sequel we adopt the terminology introduced in the introduction.

\begin{prop} \label{prop2.1n} Let $A$ be a ring. The 
following properties are equivalent.
\begin{enumerate}[i)]
\item 
The ring $A$ is a good ring,
\item
for all $a\in A$,  
the group $\frac{(\rho_a(A))^\times}{\rho_a(A^\times)}$ is a torsion group 
where $\rho_a:A\to\frac{A}{aA}$ is the natural epimorphism.
\end{enumerate}
\end{prop}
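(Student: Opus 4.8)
The plan is to unwind both conditions to the level of a single pair $(a,b)$ and recognize that the statement "$(a,b)$ is a good point" is precisely a statement about the image of $b$ in the ring $B:=A/aA$. First I would fix $a\in A$ and write $\rho_a:A\to B$ for the quotient map. Given $b\in A$, primitivity of $(a,b)$ means $aA+bA=A$, which translates exactly to $\rho_a(b)\in B^\times$; conversely, every unit of $B$ lifts (by choosing any preimage) to an element $b$ with $(a,b)$ primitive, and different lifts differ by an element of $aA$. So as $b$ ranges over all completions of $a$ to a primitive pair, $\rho_a(b)$ ranges over all of $B^\times$.

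Next I would translate the "good point" condition. For a primitive pair $(a,b)$, there exist $N\ge 1$ and $\lambda\in A$ with $b^N+\lambda a\in A^\times$. Apply $\rho_a$: this forces $\rho_a(b)^N=\rho_a(b^N+\lambda a)\in \rho_a(A^\times)$. Conversely, if $\rho_a(b)^N\in\rho_a(A^\times)$, say $\rho_a(b)^N=\rho_a(u)$ with $u\in A^\times$, then $b^N-u\in aA$, i.e. $b^N=u+\lambda a$ for some $\lambda\in A$, so $b^N+(-\lambda)a\in A^\times$ — wait, one must be careful with the sign: we get $b^N + (-\lambda)a = u \in A^\times$, which is exactly the good point condition with $\lambda$ replaced by $-\lambda$. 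Hence: \emph{$(a,b)$ is a good point} $\iff$ \emph{the class of $\rho_a(b)$ in $(\rho_a(A))^\times/\rho_a(A^\times)$ has finite order}. Note $\rho_a(A)=B$ here since $\rho_a$ is surjective, but I would keep the notation $\rho_a(A)$ to match the statement; I should double-check that $\rho_a(A^\times)$ is indeed a subgroup of $B^\times$ (it is, being the image of the group $A^\times$ under a ring homomorphism).

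Assembling these, $A$ is a good ring $\iff$ for every $a\in A$ and every $b$ with $(a,b)$ primitive, the class of $\rho_a(b)$ in $(\rho_a(A))^\times/\rho_a(A^\times)$ is torsion $\iff$ for every $a\in A$, every element of $B^\times$ has torsion class in $B^\times/\rho_a(A^\times)$ (using that $\rho_a(b)$ exhausts $B^\times$) $\iff$ for every $a\in A$, the group $(\rho_a(A))^\times/\rho_a(A^\times)$ is a torsion group. This is exactly the claimed equivalence. I do not anticipate a serious obstacle; the only things requiring care are the surjectivity/quotient bookkeeping (every unit of $B$ arises as $\rho_a(b)$ for a \emph{primitive} pair, and conversely) and the sign in $b^N+\lambda a$, both of which are routine. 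I would present the argument as the short chain of equivalences above, perhaps isolating as a first sentence the bijection between primitive completions of $a$ and elements of $B^\times$.
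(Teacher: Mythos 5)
Your proof is correct and takes essentially the same route as the paper's: both directions come down to the two observations that $(a,b)$ is primitive iff $\rho_a(b)\in(\frac{A}{aA})^\times$ and that $(a,b)$ is a good point iff some power of $\rho_a(b)$ lies in $\rho_a(A^\times)$. The sign worry you flag is harmless, since $\lambda$ ranges over all of $A$ anyway.
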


\begin{proof} 
1) We show that {\sl i)} implies {\sl ii)}.

 Let $a\in A$ and $b\in A$ with $\rho_a(b)\in (\frac{A}{aA})^\times$, then 
 there is $a',b'\in A$ with $a'a+b'b=1$ and so
$(a,b)\in A^2$ is primitive point. Then by {\sl i)} $(a,b)$ is a good point and 
so there is an integer $N\geq 1,\ \lambda \in A$ with $b^N+\lambda a\in 
A^\times$, which means that $(\rho_a(b))^N\in
\rho_a(A^\times)$, i.e. {\sl ii)} is satisfied.

2) We show that {\sl ii)} implies {\sl i)}.

Let $(a,b)\in A^2$ a primitive point and $\rho_a:A\to\frac{A}{aA}$, the natural 
epimorphism. Then $\rho_a(b)\in (\frac{A}{aA})^\times$ and by {\sl ii)} there 
is 
an integer $N\geq 1$  with $(\rho_a(b))^N\in \rho_a(A^\times)$. So there is 
$\lambda \in A$ with $b^N+\lambda a\in A^\times$, i.e. $(a,b)$ is a good point 
and {\sl i)} is satisfied.

\end{proof}
\begin{rem}\label{rem2.1}
\begin{enumerate}[1.]

\item
Property {\sl ii)} (Proposition \ref{prop2.1n}) is trivially satisfied when $a=0$ 
or 
$a\in A^\times$.

\item If $A$ is a field, then part {\sl ii)} (Proposition 
\ref{prop2.1n}) is trivially satisfied, so a field is a good 
ring. 

\item Let $A$ be a ring. If for all $a\in A-\{0\}$, 
$(\frac{A}{aA})^\times$ is a finite or a torsion group, then 
part {\sl ii)} (Proposition \ref{prop2.1n}) is satisfied and so $A$ is a good 
ring. 

\item Let $A$ be a ring. If for all $a\in A-\{0\}$, 
$\frac{A}{aA}$
is finite, then $A$ is a good ring. In particular $\Z$ is a good ring.

\item Let $A$ be a ring. If $A$ is a local ring, one can 
easily show that $A$ is a good ring (Proposition \ref{prop4.3}). One can give an
example of a good ring $A$ and $a\in A-\{0\}$ such that $(\frac{A}{aA})^\times$ is 
not a torsion group (for example $A:=\Q [[T]]$, the formal power series ring  
with rational  coefficients  and $a=T$). 
 \end{enumerate}
\end{rem}

\begin{prop}\label{prop2.2}  {\bf On the section associated to a primitive point.}
 Let $p:=(a_0,a_1,...,a_n)\in A^{n+1}$ be a primitive point i.e. 
$a_0A+a_1A+...+a_nA=A$.
\begin{enumerate}
 \item 
Let $\rho: A[X_0,X_1,...,X_n]\to A[T]$ be the $A$-homomorphism with 
$\rho(X_i)=a_iT$ for 
 $0\leq i\leq n$. Then the homomorphism $\rho$ is an epimorphism and its kernel is \newline
 ${\mathfrak P}_p:=\sum_{0\leq i<j\leq n}(a_iX_j-a_jX_i)A[X_0,X_1,...,X_n]$.
 \item 
 The structural morphism $\Proj A[T]\to \Spec A$ is an isomorphism.
 Let $\mathbb{P}^n_A:=\Proj A[X_0,X_1,...,X_n]$,
and 
 $\pi: \mathbb{P}^n_A\to \Spec A$ the structural morphism, then
 $\rho$ induces a section $\sigma_p: \Spec A\to \mathbb{P}^n_A$ (i.e. 
$\pi\sigma_p=Id_{\Spec A}$) and $\sigma_p(\Spec A)=V_+({\mathfrak P}_p)$.
 
 More precisely if ${\mathfrak p}\in \Spec A$, then  $\sigma_p({\mathfrak 
p})={\mathfrak p}A[X_0,X_1,...,X_n]+{\mathfrak P}_p$.
 
We call $\sigma_p$, {\bf  the section of $\pi$ associated to the primitive point $p$.}
\item
Let $P\in A[X_0,X_1,...,X_n]$ homogeneous of degree $d\geq 1$. The 
following assertions are equivalent 
 \begin{enumerate}
  \item $V_+(P)\cap V_+({\mathfrak P}_p)=\emptyset$,
  \item $V_+(P)\cap \sigma_p(\Spec A)=\emptyset$,
  \item $P(a_0,a_1,...,a_n)\in A^\times$.
 \end{enumerate}
\end{enumerate}
\end{prop}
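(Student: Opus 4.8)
The plan is to prove the three parts of Proposition~\ref{prop2.2} in order, since each builds on the previous one.

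\textbf{Part 1.} First I would show that $\rho$ is surjective: since $p$ is primitive, write $1 = \sum_i u_i a_i$, so $\rho\bigl(\sum_i u_i X_i\bigr) = \bigl(\sum_i u_i a_i\bigr)T = T$, and $A[T]$ is generated over $A$ by $T$. For the kernel, the inclusion $\mathfrak{P}_p \subseteq \Ker\rho$ is immediate since $\rho(a_i X_j - a_j X_i) = (a_i a_j - a_j a_i)T = 0$. For the reverse inclusion I would argue by a normal-form reduction: working modulo $\mathfrak{P}_p$, one has $a_i X_j \equiv a_j X_i$; using the $u_i$ with $\sum u_i a_i = 1$, any monomial $X_{i_1}\cdots X_{i_d}$ can be rewritten modulo $\mathfrak{P}_p$ so that, after multiplying by $1 = \sum u_k a_k$ appropriately, every homogeneous $F$ of degree $d$ is congruent modulo $\mathfrak{P}_p$ to $c_F(X_0,\dots) \bigl(\sum u_i X_i\bigr)^d$ for a suitable element — more carefully, one shows $F \equiv \rho(F)|_{T \mapsto \sum u_i X_i}$ modulo $\mathfrak{P}_p$, i.e. $F - g(\sum u_i X_i)$ lies in $\mathfrak{P}_p$ where $\rho(F) = g(T)$ with $g$ homogeneous. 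Then if $F \in \Ker\rho$ we get $g = 0$, hence $F \in \mathfrak{P}_p$. The key computational lemma is that $X_j \cdot \bigl(\sum_i u_i X_i\bigr) \equiv X_j \cdot 1$ is not quite right; rather $a_j \bigl(\sum u_i X_i\bigr) \equiv \sum u_i a_j X_i \equiv \sum u_i a_i X_j = X_j \pmod{\mathfrak{P}_p}$, so $X_j \equiv a_j L \pmod{\mathfrak{P}_p}$ where $L := \sum u_i X_i$; inductively every degree-$d$ monomial is $\equiv (\text{product of }a\text{'s}) \cdot L^d$, which lets one collapse $F$ to $\tilde g \cdot L^d$ and identify $\tilde g$ via $\rho$.

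\textbf{Part 2.} The isomorphism $\Proj A[T] \xrightarrow{\sim} \Spec A$ is standard: $A[T]$ with its standard grading has $\Proj A[T] \cong \Spec A[T]_{(T)} = \Spec A$. Since $\rho: A[X_0,\dots,X_n] \twoheadrightarrow A[T]$ is a surjection of graded rings (degree-preserving), it induces a closed immersion $\Proj A[T] \hookrightarrow \Proj A[X_0,\dots,X_n] = \mathbb{P}^n_A$ with image $V_+(\Ker\rho) = V_+(\mathfrak{P}_p)$; composing with the inverse of the isomorphism above gives $\sigma_p: \Spec A \to \mathbb{P}^n_A$, and $\pi \sigma_p = \mathrm{Id}$ because $\rho$ is an $A$-homomorphism (so the composite graded map $A[X_0,\dots] \to A[T] \to A$ in degree $0$ is the identity on $A$). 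For the explicit description of $\sigma_p(\mathfrak{p})$, one traces through: the point of $\Proj A[T]$ over $\mathfrak{p} \in \Spec A$ is the homogeneous prime $\mathfrak{p}A[T]$, and its preimage under $\rho$ is $\rho^{-1}(\mathfrak{p}A[T])$; I would check this equals $\mathfrak{p}A[X_0,\dots,X_n] + \mathfrak{P}_p$ using Part 1 (the kernel being $\mathfrak{P}_p$ and $\rho$ being surjective, so $\rho^{-1}(\mathfrak{p}A[T]) = \rho^{-1}(\rho(\mathfrak{p}A[X_0,\dots])) = \mathfrak{p}A[X_0,\dots] + \Ker\rho$).

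\textbf{Part 3.} Here (a) $\Leftrightarrow$ (b) is immediate from Part 2 ($\sigma_p(\Spec A) = V_+(\mathfrak{P}_p)$). For (a) $\Leftrightarrow$ (c): $V_+(P) \cap V_+(\mathfrak{P}_p) = V_+(P A[X_0,\dots] + \mathfrak{P}_p)$, which is empty iff the homogeneous ideal $PA[X_0,\dots] + \mathfrak{P}_p$ contains a power of the irrelevant ideal $A_+$, equivalently (after applying $\rho$, which is surjective with kernel $\mathfrak{P}_p$) iff $\rho(P) A[T] = \rho(P)A[T]$ contains a power of $(T)$ in $A[T]$. Now $\rho(P) = P(a_0,\dots,a_n) T^d$, so $\rho(P)A[T] \supseteq (T)^m$ for some $m$ iff $P(a_0,\dots,a_n) T^d \cdot A[T] \ni T^m$ for some $m \geq d$, iff $P(a_0,\dots,a_n) \in A^\times$. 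I expect the main obstacle to be Part 1's kernel computation — making the monomial-reduction argument fully rigorous (showing $\mathfrak{P}_p$ is exactly the kernel, not just contained in it) — and the book-keeping in Part 3 relating emptiness of $V_+$ to invertibility via the $\Proj$–irrelevant-ideal criterion; the $\Proj$ formalism and the isomorphism $\Proj A[T] \cong \Spec A$ are routine once one is careful that everything in sight is graded and $A$-linear.
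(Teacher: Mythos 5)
Your proposal is correct, and the interesting divergence from the paper is in the kernel computation of Part 1. The paper proves $\Ker\rho\subseteq{\mathfrak P}_p$ by localizing at each $a_i$: in $A[\frac{1}{a_0}]$ it dehomogenizes $P$, writes the resulting polynomial as $R(T_1-\frac{a_1}{a_0},\dots,T_n-\frac{a_n}{a_0})$ with $R(0,\dots,0)=0$, rehomogenizes to conclude $a_0^{d_0}P\in{\mathfrak P}_p$ for some $d_0$, repeats for each $a_i$, and glues using $\sum a_iA=A$. Your normal-form reduction replaces all of this with the single congruence $X_j\equiv a_jL\pmod{{\mathfrak P}_p}$ for $L:=\sum u_iX_i$ (which you verify correctly: $a_jL=\sum_i u_ia_jX_i\equiv\sum_i u_ia_iX_j=X_j$), whence every homogeneous $F$ of degree $d$ satisfies $F\equiv F(a_0,\dots,a_n)L^d\pmod{{\mathfrak P}_p}$ and $F\in\Ker\rho$ forces $F(a_0,\dots,a_n)=0$; this is global, avoids localization and the Taylor-expansion step entirely, and is arguably cleaner — its only cost is having to note that $\Ker\rho$ is homogeneous so one may reduce to homogeneous $F$. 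In Part 3 you also take a slightly different route: the paper argues pointwise (a prime containing $P(a_0,\dots,a_n)$ pulls back to a point of $V_+(P)\cap\sigma_p(\Spec A)$, and conversely), whereas you use the criterion that $V_+(I)=\emptyset$ iff $I$ contains a power of the irrelevant ideal and push the question through $\rho$ to the statement $T^m\in P(a_0,\dots,a_n)T^dA[T]$; both work, and yours has the merit of making the role of $\rho$ uniform across all three parts. Part 2 is essentially identical in both treatments.
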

\begin{proof} 
\noindent 1. Let $\lambda_0,\lambda_1,...,\lambda_n\in A$ be such that 
 $\lambda_0 a_0+\lambda_1 a_1+...+\lambda_n a_n=1$, then \newline
 $\rho(\lambda_0 X_0+\lambda_1 X_1+...+\lambda_n X_n)=T$ and so $\rho $ is onto.
 
As $\rho(a_iX_j-a_jX_i)=0$ we have ${\mathfrak P}_p\subset \Ker \rho$.
 Let $P(X_0,X_1,...,X_n)\in \Ker \rho$ an homogenous polynomial of degree $d$, 
then we have 
 $P(a_0,a_1,...,a_n)=0$.

 Let $\mathfrak M\subset A$ a maximal ideal. As $a_0A+a_1A+...+a_nA=A$ there is  
 $a_i\notin \mathfrak M$. Then
 $a_i$ is invertible in $A_{\mathfrak M}$ and so $P\in \sum_{0\leq j\leq 
n}(a_iX_j-a_jX_i)A_{\mathfrak M}[X_0,X_1,...,X_n]$. It follows that $(\Ker \rho)_{\mathfrak M}
\subset ({\mathfrak P}_p )_{\mathfrak M}$ for all $\mathfrak M$ and so $(\Ker \rho)
\subset {\mathfrak P}_p $.

 
 \noindent 2. This is  an exercise which is left to the reader.

 \noindent 3. 
 As (b) rephrases (a) in the geometric language it is sufficient to prove (a) equivalent (c).
 
Let us assume that (a) is not verified. Then by 2. there is  a prime ideal ${\mathfrak p}\subset A$ with \newline 
 $P(X_0,X_1,...,X_n) \in {\mathfrak p}A[X_0,X_1,...,X_n]+{\mathfrak P}_p$. One has 
 $P(a_0,a_1,...,a_n)\in {\mathfrak p}$ and so
 $P(a_0,a_1,...,a_n)\notin A^\times$. 
 
 Let us assume now that $P(a_0,a_1,...,a_n)\notin A^\times$, then there is ${\mathfrak p}\in \Spec A$ such that 
 $P(a_0,a_1,...,a_n)\in {\mathfrak p}$. It follows that 
 $P(X_0,X_1,...,X_n)\in {\mathfrak p}A[X_0,X_1,...,X_n]+{\mathfrak P}_p$. 
 \end{proof}
 
 \begin{rem}
  On sections for the morphism $\pi:\mathbb{P}^n_A\to \Spec A$.
  
  We know that such a section is associated to an onto $A$-linear map $f: A^{n+1}\to M$ where $M$ is 
  a locally free rank one $A$-module, (\cite{GD}, Theorem 4.2.4, p. 74).
  The case of sections associated to a primitive point corresponds to the case where $M$ is a free rank one 
  $A$-module.

 \end{rem}

\begin{thm} \label{thm2.1} Let $A$ be a ring. The following 
properties are 
equivalent.
\begin{enumerate}[i)]
\item The ring $A$ is a good ring,
\item the ring $A$ has enough homogeneous polynomials,
\item the ring $A$ has enough homogeneous polynomials in two variables.

\end{enumerate}
\end{thm}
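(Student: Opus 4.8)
The plan is to prove the chain of implications $i) \Rightarrow ii) \Rightarrow iii) \Rightarrow i)$. The implication $ii) \Rightarrow iii)$ is immediate, since enough homogeneous polynomials in $n$ variables for all $n \ge 2$ in particular gives it for $n = 2$. The implication $iii) \Rightarrow i)$ is also essentially a definitional unwinding: given a primitive point $(a,b) \in A^2$, apply property $iii)$ to the singleton set $S = \{(b,a)\}$ (note the swap) to obtain a homogeneous $P(X_1,X_2)$ of degree $d \ge 1$ with $P(b,a) \in A^\times$; writing $P(X_1,X_2) = c\, X_1^d + X_2 \cdot Q(X_1,X_2)$ for some $c \in A$ and $Q \in A[X_1,X_2]$, we get $c\, b^d + a\, Q(b,a) \in A^\times$. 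Since $(a,b)$ is primitive there exist $a', b'$ with $a'a + b'b = 1$, and one must upgrade the leading coefficient $c$ to a unit modulo $a$; the standard trick is to replace $P$ by $P + a \cdot (\text{something})$ adjusting only the $X_1^d$-coefficient, or more directly to observe that $c b^d \equiv \text{unit} \pmod{aA}$, hence $b$ is invertible in $A/aA$ — wait, that is already known — the real content is that \emph{some power} of $b$ lies in the image of $A^\times$. I expect the cleanest route is: from $P(b,a) \in A^\times$ with $P$ homogeneous of degree $d$, reduce mod $aA$ to get $\rho_a(c)\rho_a(b)^d \in (\rho_a(A))^\times$, so $\rho_a(b)^d \in (\rho_a(A))^\times$ as well (since $\rho_a(b)$ is already a unit in $A/aA$, $\rho_a(c)$ must be too), but that gives nothing new. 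The actual argument must lift a genuine unit: $P(b,a) = u \in A^\times$ means $c b^d + a\, Q(b,a) = u$, i.e. $c b^d \equiv u \pmod{aA}$, and since $u$ is a unit, $c b^d$ is a unit mod $aA$; but we want $b^N + \lambda a \in A^\times$ for some $N$. Here is the key point: iterate. The hard part is therefore $i) \Rightarrow ii)$.

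**The main step: $i) \Rightarrow ii)$.** This is where the work lies, and I expect it to proceed by induction on $n$, the number of variables, with the base case $n = 2$ being exactly a restatement (for a single point) combined with a product argument to handle $k$ points simultaneously. First I would handle a \emph{single} primitive point $p = (a_0, \dots, a_{n-1}) \in A^n$: reorder so that, using primitivity and the good-ring hypothesis applied to the pair $(a_0, a_1 + a_2 + \cdots)$ or better to a pair $(a_0, b)$ where $b$ is chosen so that $(a_0,b)$ is primitive, one can iteratively reduce the number of variables — e.g. substitute and use that $A$ being good means for the primitive pair $(a_0, a_1)$ inside a suitable quotient one can find a homogeneous form. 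The natural inductive mechanism: given $p = (a_0,\dots,a_{n-1})$ primitive with $n \ge 3$, note $(a_0, \dots, a_{n-2})$ generates an ideal $I$; if $I = A$ we drop $X_{n-1}$ and induct, otherwise work modulo $a_{n-1}$-related data. A more robust approach, and the one I would actually pursue: use property $iii)$ repeatedly. Since $A$ is good, $A$ has enough homogeneous polynomials in two variables for \emph{finite sets} — so first upgrade $iii)$ for a single point to $iii)$ for $k$ points via a product/multiplicativity observation (if $P_i(p_j) \in A^\times$ individually one cannot just multiply because degrees differ and we need one polynomial working on all $p_i$; instead raise each $P_i$ to a power to equalize degrees, then... products of homogeneous polynomials of equal degree are homogeneous, but $P_1 \cdots P_k$ evaluated at $p_j$ is a product including $P_j(p_j) \in A^\times$ times other factors which need not be units). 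The genuinely delicate point is making \textbf{one} polynomial simultaneously avoid finitely many sections, and I expect the paper to do this by a Chinese-Remainder-type or clever linear-combination argument rather than naive multiplication.

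**Assembling the pieces.** Concretely I would structure $i) \Rightarrow ii)$ as: (1) show $A$ good $\Rightarrow$ for every primitive $p \in A^n$ there is a single homogeneous $P$ with $P(p) \in A^\times$, by induction on $n$ using Proposition~\ref{prop2.1n}(ii) — at each stage the quotient $A/aA$ for an appropriate $a$ inherits enough structure, or one directly manipulates: if $a_0 A + \cdots + a_{n-1}A = A$, pick $\lambda_i$ with $\sum \lambda_i a_i = 1$, set $b = \sum_{i \ge 1} \lambda_i a_i$, so $(a_0, b)$ is primitive, apply good-ring to get $N, \mu$ with $b^N + \mu a_0 \in A^\times$; now $b^N + \mu a_0$ comes from the homogeneous form $(\sum_{i\ge1}\lambda_i X_i)^N + \mu X_0 \cdot (\sum \lambda_i X_i)^{N-1}$ — check this is homogeneous of degree $N$ and evaluates correctly. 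This already gives the single-point case in \emph{one} step, no induction needed! (2) For $k$ points $p_1, \dots, p_k$: get $P_j$ homogeneous of degree $d_j$ with $P_j(p_j) \in A^\times$; replace $P_j$ by $P_j^{e_j}$ to make all degrees equal to $D = \mathrm{lcm}$; now I need a single form. The trick: for each $j$, $P_j(p_i)$ for $i \ne j$ generates together with... — actually use that the points being distinct sections, consider $P := \sum_j c_j P_j \prod_{i \ne j}(\text{form vanishing appropriately})$ — this is getting intricate; the honest expectation is that this $k$-point gluing is \textbf{the main obstacle}, handled by an inductive argument on $k$: suppose $P$ works for $p_1,\dots,p_{k-1}$ (homogeneous degree $D$) and $P'$ works for $p_k$ (degree $D'$, equalize to $D$); consider $\alpha P^{?} + \beta P'^{?}$; one wants at $p_i$ ($i<k$) to stay a unit and at $p_k$ to become a unit, using that $P'(p_i)$ and $P(p_k)$ are \emph{some} ring elements — invoke the good-ring property once more on the pair $(P(p_k)\text{-ish}, P'(p_k)\text{-ish})$ in a localization. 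I would present step (1) carefully and in full, then state step (2) as the induction on $k$ with the key sub-lemma being another application of the good-ring definition to suitably constructed primitive pairs, flagging that the degree-equalization by raising to powers plus one final good-point extraction is what closes it.
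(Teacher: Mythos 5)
Your overall skeleton matches the paper's: the chain $i)\Rightarrow ii)\Rightarrow iii)\Rightarrow i)$, with $ii)\Rightarrow iii)$ trivial, the single-point case of $ii)$ handled by a linear form, and the $k$-point case by induction on $k$ closed by one application of the good-ring property to a constructed primitive pair. However, there are two genuine gaps, one in each nontrivial implication.

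First, $iii)\Rightarrow i)$. You apply $iii)$ to the singleton $\{(b,a)\}$ and then observe, correctly, that this ``gives nothing new'': for a single primitive point a linear form always exists, in any ring, so no information about $A$ being good can be extracted. You never resolve this. The missing idea is to apply $iii)$ to the \emph{two-point} set $\{(0,1),(a,b)\}$. Writing $P(X_1,X_2)=a_0X_2^d+a_1X_1X_2^{d-1}+\cdots+a_dX_1^d$, the condition $P(0,1)\in A^\times$ forces the coefficient $a_0$ of $X_2^d$ to be a unit, and then $P(a,b)=a_0b^d+\mu a\in A^\times$ yields $b^d+a_0^{-1}\mu a\in A^\times$, i.e.\ $(a,b)$ is a good point. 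Without the auxiliary point $(0,1)$ pinning down the leading coefficient, the implication does not go through, and your proposed iteration has no mechanism to produce it.

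Second, $i)\Rightarrow ii)$. You correctly isolate the $k$-point gluing as the main obstacle and propose induction on $k$ with degree equalization, but the construction that actually closes the induction is absent. Given $P$ of degree $d$ working on $S'=\{p_1,\dots,p_k\}$ and a new primitive point $q$, the paper introduces for each $t$ the degree-one forms $A_{i,j,t}:=p_{i,t}X_j-p_{j,t}X_i$, which vanish at $p_t$ and take the values $a_{i,j,t}:=p_{i,t}q_j-p_{j,t}q_i$ at $q$; a rank argument modulo a maximal ideal (using $P(p_t)\in A^\times$ and the primitivity of $p_t$ and $q$) shows $P(q)A+\sum_{i,j}a_{i,j,t}A=A$, whence a form $B_t$, zero or homogeneous of degree $1$, with $B_t(p_t)=0$ and $P(q)a_t+B_t(q)=1$. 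Multiplying over $t$ makes $\bigl(\prod_tB_t(q),\,P(q)\bigr)$ a primitive pair in $A^2$, the good-ring property gives $P(q)^N+\lambda\prod_tB_t(q)\in A^\times$, and the final polynomial is $R:=P^N+\lambda\bigl(\prod_tB_t\bigr)W^{N\deg P-k}$ with $W$ a linear form with $W(q)=1$ (after raising $P$ to a power so that $N\deg P\geq k$). Your sketch gestures at ``$\alpha P^{?}+\beta P'^{?}$'' and a ``Chinese-Remainder-type'' argument but never produces the forms $B_t$ vanishing on the old points while being coprime to $P(q)$ at the new one; that is precisely the content of the step, and without it the induction does not close.
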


\begin{proof} We show that {\sl i)} implies {\sl ii)} implies {\sl iii)} 
implies {\sl i)}. 
\begin{enumerate}[1)]
\item We show {\sl i)} implies {\sl ii)}. The proof works by induction on 
$k=\card S$. Let $n\geq 1$. 
\vskip 1mm \vskip 0pt
 1.1) If $\card S=1$, then $S=\{p_1=(p_{1,1},p_{2,1},...,p_{n,1})\in A^n\}$ and there 
are $u_1,u_2,...,u_n\in A$ with $\sum_{1\leq j\leq n}u_jp_{j,1}=1$. 

Clearly 
$P(X_1,X_2,...,X_n):=\sum_{1\leq i\leq n}u_iX_i$ works.
\vskip 1mm \vskip 0pt 
 1.2) Let $k\geq 1$ and $S':=\{p_1,p_2,...,p_k\}\subset A^n$,  consisting in $k$ 
primitives points.
 By induction hypothesis there is an homogeneous polynomial 
$P(X_1,X_2,...,X_n)\in A[X_1,X_2,...,X_n]$ of degree $d\geq 1$ with 
$P(p_i):=P(p_{1,i},p_{2,i},...,p_{n,i})\in A^\times$, where 
$p_i:=(p_{1,i},p_{2,i},...,p_{n,i})$ for  $1\leq i\leq k$.
 
 Let $q=(q_1,q_2,...,q_n)\in A^n$ be a primitive point with $q\notin S'$ . We want 
to find \newline $R(X_1,X_2,...,X_n)\in A[X_1,X_2,...,X_n]$,  an homogeneous polynomial 
of degree $d'\geq 1$, with $R(p)\in A^\times$ for all $p\in S'$ and for $p=q$.

 \vskip 1mm \vskip 0pt
 1.2.1) Let $a_{i,j,t}:=p_{i,t}q_j-p_{j,t}q_i$  and 
$A_{i,j,t}(X_1,X_2,...,X_n):=p_{i,t}X_j-p_{j,t}X_i$ for $1\leq t\leq k$. We have 
$A_{i,j,t}(q)=a_{i,j,t}$, $A_{i,j,t}(p_t)=0$.

 Let $\mathfrak A_t:=\sum_{1\leq i,j\leq n}a_{i,j,t}A\subset A$. 
 \vskip 1mm \vskip 0pt
 1.2.2) {\sl We show that $P(q)A+ \mathfrak A_t=A$, for all $1\leq t\leq k$.}
 
 Let us assume there is a maximal ideal $\mathfrak M$ in $A$ with $P(q)\in 
\mathfrak 
M$ and  $\mathfrak A_t\subset \mathfrak M,$ i.e. $a_{i,j,t}\in \mathfrak M$ for all 
$1\leq 
i,j\leq n$.\newline Let $\rho:A\to \frac{A}{\mathfrak M}$ be the natural epimorphism, then  
$\rho(p_{i,t})\rho(q_j)-\rho(p_{j,t})\rho(q_i)=\rho(a_{i,j,t})=0$  for all 
$1\leq i,j\leq n$. This means that the matrix 
 $\left[
\begin{array}{cccc}
 \rho(p_{1,t}) &\rho(p_{2,t}) &...&\rho(p_{n,t}) \\
\rho(q_1)&\rho(q_2)&...&\rho(q_n)
\end{array}
\right]$  has rank $\leq 1$.

As $p_t$ is a primitive point  we have $(\rho(p_{1,t}),\rho(p_{2,t}) 
,...,\rho(p_{n,t}))\neq (0,0,...,0)$  and so there is $\lambda_t \in A$ with  $( 
\rho(q_1),\rho(q_2),...,\rho(q_n))=\rho(\lambda_t)(\rho(p_{1,t}),\rho(p_{2,t}) 
,...,\rho(p_{n,t})).$  Now as $q$ is a primitive point we have 
$\rho(\lambda_t)\neq 0$. 
Moreover $\rho(P(q))=\rho(\lambda_t)^{\deg P}\rho(P(p_t))$ and as $P(p_t)\in 
A^\times$ we get  $\rho(P(q))\neq 0$; a contradiction. It follows that $P(q)A+ 
\mathfrak A_t=A$ for all $1\leq t\leq k$.
\vskip 1mm \vskip 0pt
1.2.3) It follows from 1.2.2) that $1=P(q)a_t+\sum_{1\leq i,j\leq 
n}u_{i,j,t}a_{i,j,t}$ for some $a_t,u_{i,j,t}\in A$. 

Let $B_t(X_1,X_2,...,X_n):=\sum_{1\leq i,j\leq 
n}u_{i,j,t}A_{i,j,t}(X_1,X_2,...,X_n)$, then  $B_t(X_1,X_2,...,X_n)$ is nul or 
homogeneous of degree $1$. 

Moreover, we have $1=P(q)a_t+B_t(q)$ and $B_t(p_t)=0$. Then \newline $1=\prod_{1\leq 
t\leq k}(P(q)a_t+B_t(q))=P(q)a+\prod_{1\leq t\leq k}B_t(q)$, with $a\in A$. 
\newline
It follows that $(\prod_{1\leq t\leq k}B_t(q),P(q))$ is a primitive point in 
$A^2$ and  as $A$ is a good ring (Definition \ref{def.Good}), there is $N\geq 1$ and $\lambda\in A$ with 
$P(q)^N+\lambda \prod_{1\leq t\leq k}B_t(q)=\epsilon \in A^\times$. 
\vskip 1mm \vskip 0pt
1.2.4) Note that if $\alpha\geq 1$, then $P^{\alpha}$ is homogeneous of degree 
$\alpha \deg P$ as $P(p_t)\in A^\times$ which prevent $P$ to be a nilpotent 
element in $A[X_1,X_2,...,X_n]$. It follows that up to changing $P$ to
$P^{\alpha}$, we can assume that $N\deg P\geq k$. 

As $q=(q_1,q_2,...,q_n)$  is a primitive point, there is $u_1,u_2,...,u_n\in A$ 
with $\sum_{1\leq s\leq n}u_sq_s=1$. \newline Let $W(X_1,X_2,...,X_n):=\sum_{1\leq s\leq 
n}u_sX_s$ and \newline
$R(X_1,X_2,...,X_n):=P(X_1,X_2,...,X_n)^N+
 \lambda (\prod_{1\leq t\leq 
k}B_t(X_1,X_2,...,X_n))W(X_1,X_2,...,X_n)^{N\deg P-t}$. 
Then $R(p_t)=P(p_t)^N\in 
A^\times$, in particular $R(X_1,X_2,...,X_n)$ is not $0$ and with 1.2.3), \newline
$\lambda (\prod_{1\leq t\leq 
k}B_t(X_1,X_2,...,X_n))W(X_1,X_2,...,X_n)^{N\deg P-t}$  is nul or 
homogeneous of degree $N\deg P$, so $R(X_1,X_2,...,X_n)$
is homogeneous of degree $N\deg P$.\newline  Moreover  $R(q)=P(q)^N+\lambda \prod_{1\leq 
t\leq k}B_t(q)=\epsilon \in A^\times$. This shows {\sl ii)}.

\item The implication {\sl ii)} implies {\sl iii)} follows from the definition.

\item We show {\sl iii)} implies {\sl i)}.

Let us assume that {\sl i)} isn't satisfied, we show that {\sl iii)} isn't 
satisfied.

So there is $(a,b)\in A^2$ a primitive point which isn't a good point, i.e.  
for all $N\geq 1$ and 
$\lambda \in A$ one has $b^N-\lambda a\notin A^\times$. 

Let assume there is an 
homogeneous polynomial $P(X_1,X_2)\in A[X_1,X_2]$ of degree $d\geq 1$ such that 
$P(0,1)=:\epsilon_1\in A^\times$ and $P(a,b)=:\epsilon_2\in A^\times$. We write 
$P(X_1,X_2)=a_0X_2^d+a_1X_1X_2^{d-1}+...+a_dX_1^d$ then  
$a_0=P(0,1)=\epsilon_1\in 
A^\times$ 
and $\epsilon_2=\epsilon_1b^d+\mu a$ where $\mu\in A$.  \newline It follows that
$b^d+(\epsilon_1)^{-1}\mu a=\epsilon_2(\epsilon_1)^{-1}\in A^\times$, which 
gives 
a contradiction. 
\end{enumerate}
\end{proof}

\begin{rem}\label{rem2.2} In (\cite{BE}, Theorem 0.3),  the authors show that PID (principal 
ideal domain) such that the quotients by maximal ideal are finite, have enough 
homogeneous polynomials.  

Our Theorem \ref{thm2.1} with Proposition \ref{prop2.1n}, 
gives a characterization of good rings in terms of their quotient rings by 
principal ideals. With this tool we are able to give in Section 4, numerous 
examples of rings which are or aren't good rings.
\end{rem}

 Now we can rephrase the fact that a ring has enough homogeneous polynomials  in terms of sections associated 
 to primitive points (Proposition \ref{prop2.2} part 2.)  and so we can characterize good rings in terms of an avoidance property 
 (compare with \cite{GLL}, Theorem 5.1, p. 1188).
 
 \begin{thm}\label{thm2.2}
  Let $A$ be a ring, $\mathbb{P}^n_A:=\Proj (A[X_0,X_1,...,X_n])$. Then the following assertions are equivalent.
  \begin{enumerate}
   \item 
   The ring $A$ is a good ring,
   \item 
   the ring $A$ has enough homogeneous polynomials,
   \item 
   for any finite family $\{p_1,p_2,...,p_s\}$ of primitive points in $A^{n+1}$ there is an homogeneous  polynomial 
   $P(X_0,X_1,...,X_n)\in A[X_0,X_1,...,X_n]$ with $\deg P\geq 1$ such that \newline $V_+(P)\cap \sigma_{p_i}(\Spec A)=\emptyset$ for $1\leq i\leq s$
   and $\sigma_{p_i}$ is the section associated to $p_i$.

  \end{enumerate}

 \end{thm}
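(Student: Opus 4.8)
The plan is to deduce Theorem~\ref{thm2.2} from Theorem~\ref{thm2.1} together with part~3 of Proposition~\ref{prop2.2}, the latter playing the role of a dictionary that translates the algebraic condition $P(p)\in A^\times$ into the geometric condition $V_+(P)\cap\sigma_p(\Spec A)=\emptyset$. Once the equivalence of (1) and (2) is granted --- this is exactly Theorem~\ref{thm2.1} --- it remains only to connect (2) and (3), and for a fixed homogeneous $P$ of degree $\geq 1$ and a fixed primitive point $p=(a_0,\dots,a_n)\in A^{n+1}$, the equivalence of (b) and (c) in part~3 of Proposition~\ref{prop2.2} says precisely that $P(p)\in A^\times$ if and only if $V_+(P)\cap\sigma_p(\Spec A)=\emptyset$.

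First I would prove (2) $\Rightarrow$ (3). Let $\{p_1,\dots,p_s\}$ be a finite family of primitive points in $A^{n+1}$. Since $n\geq 1$ there are $n+1\geq 2$ homogeneous variables, so Definition~\ref{def2.4} (applied with $n+1$ in place of $n$, after relabelling the variables) furnishes a homogeneous $P\in A[X_0,\dots,X_n]$ with $\deg P\geq 1$ and $P(p_i)\in A^\times$ for $1\leq i\leq s$. Applying part~3 of Proposition~\ref{prop2.2} to each $p_i$ converts this into $V_+(P)\cap\sigma_{p_i}(\Spec A)=\emptyset$ for $1\leq i\leq s$, which is assertion (3).

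Next I would prove (3) $\Rightarrow$ (1) by a padding argument reducing to two variables. Let $(a,b)\in A^2$ be a primitive point; I claim it is a good point. Apply (3) to the two points $p_1:=(a,b,0,\dots,0)$ and $p_2:=(0,1,0,\dots,0)$ of $A^{n+1}$, which are primitive because $aA+bA=A$. This yields a homogeneous $P\in A[X_0,\dots,X_n]$ of some degree $d\geq 1$ with $V_+(P)\cap\sigma_{p_j}(\Spec A)=\emptyset$ for $j=1,2$, hence, by part~3 of Proposition~\ref{prop2.2}, with $P(p_1)\in A^\times$ and $P(p_2)\in A^\times$. Setting $Q(X_0,X_1):=P(X_0,X_1,0,\dots,0)$, this is homogeneous of degree $d$, and $Q(0,1)=P(p_2)=:\epsilon_1\in A^\times$ is the coefficient of $X_1^d$ in $Q$, while $Q(a,b)=P(p_1)=:\epsilon_2\in A^\times$. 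Writing $Q(X_0,X_1)=\epsilon_1X_1^d+X_0R(X_0,X_1)$ with $R$ homogeneous of degree $d-1$, one gets $\epsilon_1b^d+aR(a,b)=\epsilon_2$, hence $b^d+\epsilon_1^{-1}aR(a,b)=\epsilon_1^{-1}\epsilon_2\in A^\times$, so $(a,b)$ is a good point and $A$ is a good ring. This last computation is the same one used in the proof of Theorem~\ref{thm2.1} for the implication (iii) $\Rightarrow$ (i); alternatively one may phrase this paragraph as ``(3) implies that $A$ has enough homogeneous polynomials in two variables'' and then quote Theorem~\ref{thm2.1}.

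The only real bookkeeping issue --- hence the only (mild) obstacle --- is the mismatch between the number of variables in the three assertions: (3) is stated with a fixed $n\geq 1$ and thus $n+1$ homogeneous variables, whereas ``good ring'' and ``enough homogeneous polynomials in two variables'' involve exactly two. Both directions above handle this by padding with zero coordinates (to pass from two variables up to $n+1$) and by specializing the surplus variables to $0$ (to come back down); as a byproduct this shows that property (3) is independent of the choice of $n\geq 1$. No idea beyond Theorem~\ref{thm2.1} and Proposition~\ref{prop2.2} is needed.
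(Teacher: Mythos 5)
Your proof is correct and follows the route the paper itself intends: the paper gives no separate proof of Theorem~\ref{thm2.2}, merely remarking that it is a rephrasing of Theorem~\ref{thm2.1} via part~3 of Proposition~\ref{prop2.2}, which is exactly the dictionary you use. Your explicit handling of the cycle $(2)\Rightarrow(3)\Rightarrow(1)$, with the padding/specialization argument to reconcile the fixed number $n+1$ of variables in (3) with the two-variable characterization of good rings, is a careful and accurate filling-in of the details the paper leaves implicit.
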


\section{Primitive points in $A^2$ and Picard group}

\begin{prop} \label{prop3.1}
Let $A$ be a ring and $(a,b)\in A^2$ a primitive point. Let 
$A[x,y]:=\frac{A[X,Y]}{X(aY-bX)A[X,Y]}$ where $x$ (resp. $y$) is the image of 
$X$ (resp. $Y$). Moreover $A[x,y]$ is endowed with the induced grading of 
$A[X,Y]$. Let $S(a,b):=\Proj(A[x,y])$. 
\begin{enumerate}[1.]
\item
The $A$-algebra ${\cal O}_{S(a,b)}(S(a,b))$ is a  free  $A$-module of rank two.

More concretely, there is $\theta \in {\cal O}_{S(a,b)}(S(a,b))$ with 
$\theta_{|D_+(x)}=0$, 
$\theta_{|D_+(y)}=\frac{ay-bx}{y}$. One has $\theta^2=a\theta$, ${\cal 
O}_{S(a,b)}(S(a,b))\simeq \frac{A[T]}{T(a-T)A[T]}$ and $(1,\theta)$ is a basis 
for the 
$A$-module ${\cal O}_{S(a,b)}(S(a,b))$. 

Moreover the scheme $S(a,b)$ is affine and 
isomorphic to $\Spec ({\cal O}_{S(a,b)}(S(a,b)))$.

\item
Let $d\in \N^{>0}$. When considering the $ {\cal O}_{S(a,b)}(S(a,b))$-module 
$ {\cal O}_{S(a,b)}(d)(S(a,b))$ as an $A$ module, we have
 $$ {\cal O}_{S(a,b)}(d)(S(a,b))=\sum_{0\leq k \leq 
d}Ax^ky^{d-k}.$$
\item
Let $a',b'\in A$ with $aa'+bb'=1$. There is an epimorphism 
$u: A[x,y]\to \frac{A[x,y]}{(ay-bx)A[x,y]}\simeq A[a'X+b'Y]$
which is defined by 
$u(x)=a(a'X+b'Y)$ and  $u(y)=b(a'X+b'Y)$.

Let $\Delta(a,b):=\Proj (A[a'X+b'Y])$, $d\geq 1$. Then $u$ induces an 
epimorphism \newline $u':{\cal O}_{S(a,b)}(d)(S(a,b))\to {\cal 
O}_{\Delta(a,b)}(d)(\Delta(a,b))$
such that  $u'(P(x,y))=P(a,b)(a'X+b'Y)^d$ where $P(X,Y)\in A[X,Y]$ is homogeneous 
of degree 
$d$ and  $\{(a'X+b'Y)^d\}$ is a basis for the $A$-module 
${\cal O}_{\Delta(a,b)}(d)(\Delta(a,b))$. 

Analogously,  let $v: A[x,y] \to A[Y]$ with $v(x)=0$ and $v(y)=Y$ and if 
$\Delta(0,1):=\Proj (A[Y])$ 
and  $d\geq 1$ one has an epimorphism 
$v':{\cal O}_{S(a,b)}(d)(S(a,b))\to {\cal O}_{\Delta(0,1)}(d)(\Delta(0,1))$ 
with \newline 
$v'(P(x,y))=P(0,1)Y^d$ where $P(X,Y)\in A[X,Y]$ is homogeneous of degree 
$d$ and $\{Y^d\}$ is a basis for the $A$-module 
${\cal O}_{\Delta(0,1)}(d)(\Delta(0,1))$.

\end{enumerate}
\end{prop}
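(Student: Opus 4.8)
The plan is to work out everything explicitly on the two standard affine charts $D_+(x)$ and $D_+(y)$ of $S(a,b):=\Proj(A[x,y])$, where $A[x,y]=A[X,Y]/X(aY-bX)A[X,Y]$. First I would note that the relation $x(ay-bx)=0$ governs the structure. On $D_+(x)$ we have $A[x,y]_{(x)} = A[y/x]/(ay/x - b)$, so setting $t:=y/x$ this is $A[t]/(at-b)$, which via $t\mapsto (b+\text{nilpotent-free stuff})$... more precisely, since $aa'+bb'=1$, this ring is a principal localization-type quotient; but the cleaner route is to observe directly that $S(a,b)$ is covered by $D_+(x)$ and $D_+(y)$ whose intersection is $D_+(xy)$, and to compute global sections as the equalizer. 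For part 1, I would exhibit $\theta$ by its restrictions $\theta|_{D_+(x)}=0$ and $\theta|_{D_+(y)}=(ay-bx)/y = a - b(x/y)$; one checks these agree on $D_+(xy)$ because there $x(ay-bx)=0$ and $x$ is a unit, forcing $ay-bx=0$, so $(ay-bx)/y=0$ on the overlap, matching $0$. Then $\theta^2=a\theta$ follows chart-wise (on $D_+(y)$, $((ay-bx)/y)^2 = (ay-bx)/y \cdot (a - bx/y)$ and again $x(ay-bx)=0$ kills the $bx/y$ term after multiplying by the overlap... actually it holds because $(ay-bx)^2 = (ay-bx)(ay) - (ay-bx)bx$ and $x(ay-bx)=0$). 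Hence the subring generated by $\theta$ is $A[T]/(T^2-aT)=A[T]/T(a-T)A[T]$, and I would argue $(1,\theta)$ spans all of ${\cal O}(S(a,b))$ by checking that any compatible pair of sections $(f_x, f_y)$ on the two charts can be written $\alpha + \beta\theta$: on $D_+(x)$, $f_x\in A[t]/(at-b)$ and on $D_+(y)$, $f_y\in A[s]/(bs-a)$ with $s=x/y$, and the gluing condition on $D_+(xy)$ (where $at=b$, i.e. $t$ is forced and $s=1/t$ behaves correspondingly) pins down a unique $A$-linear combination. Affineness of $S(a,b)$ and the isomorphism with $\Spec$ of its global sections then follows from e.g. the criterion that a scheme with an ample line bundle whose sections separate... or more elementarily from the explicit cover — I would just note $S(a,b)=D_+(x)\cup D_+(y)$ with $D_+(x)$ affine and the global-section ring mapping isomorphically.

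For part 2, I would compute ${\cal O}_{S(a,b)}(d)(S(a,b))$ similarly as the equalizer of ${\cal O}(d)|_{D_+(x)}$ and ${\cal O}(d)|_{D_+(y)}$ over $D_+(xy)$. A degree-$d$ section on $D_+(x)$ is $x^d \cdot (\text{element of } A[y/x]/(ay/x-b))$, i.e. spanned by $x^d, x^{d-1}y, \dots$ modulo the relation; similarly on $D_+(y)$. The monomials $x^k y^{d-k}$ for $0\le k\le d$ are honest degree-$d$ elements of $A[x,y]$, hence define global sections of ${\cal O}(d)$, giving $\sum_{0\le k\le d} A x^k y^{d-k} \subseteq {\cal O}_{S(a,b)}(d)(S(a,b))$. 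For the reverse inclusion I would take a global section, restrict to $D_+(x)$ to write it in terms of the $x^k y^{d-k}$ using the relation $x(ay-bx)=0$ to reduce any ambiguity, and check the $D_+(y)$-compatibility is automatic — the key point being that in $A[x,y]$ a degree-$d$ homogeneous element is exactly a combination $\sum c_k x^k y^{d-k}$ modulo the single relation $x(aY-bX)$ of degree $2$, so in degree $d\ge 1$ one subtracts multiples of $x^{k-1}(ay-bx)\cdot(\text{deg } d-2)$, and the image is free-ish; the global sections being larger than the homogeneous part of $A[x,y]$ only by the usual saturation, which here is controlled since $(a,b)$ is primitive.

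For part 3, given $a',b'$ with $aa'+bb'=1$, I would define $u:A[x,y]\to A[x,y]/(ay-bx)A[x,y]$ as the quotient map and identify the target: in $A[X,Y]/(X(aY-bX), aY-bX)$, the relation $aY=bX$ together with $aa'+bb'=1$ lets one write both $X$ and $Y$ in terms of $Z:=a'X+b'Y$, namely $X = a Z$ and $Y = bZ$ (check: $a'(aZ)+b'(bZ) = (aa'+bb')Z = Z$, consistent, and $a(bZ)-b(aZ)=0$ as needed), giving an isomorphism with $A[Z]=A[a'X+b'Y]$ under which $x\mapsto aZ$, $y\mapsto bZ$. This is exactly the claimed formula for $u$. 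Passing to $\Proj$ gives the closed immersion $\Delta(a,b)=\Proj(A[Z])\hookrightarrow S(a,b)$ (note $\Proj A[Z]\cong \Spec A$), and $u$ induces on degree-$d$ global sections the map $P(x,y)\mapsto P(aZ,bZ) = P(a,b)Z^d$, with $\{Z^d\}=\{(a'X+b'Y)^d\}$ a basis of the free rank-one module ${\cal O}_{\Delta(a,b)}(d)(\Delta(a,b)) = A\cdot Z^d$. Surjectivity of $u'$ is clear since already $x^d\mapsto a^d Z^d$ and combinations of the $P(a,b)$ over homogeneous $P$ include units because $(a,b)$ is primitive (take $P$ linear with $P(a,b)=1$). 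The analogous statement for $v$ with $v(x)=0$, $v(y)=Y$ corresponds to the other component $\Delta(0,1)$ of the reducible scheme — here $X\mapsto 0$ kills the factor $X$ of the relation — and the induced map on sections is $P(x,y)\mapsto P(0,1)Y^d$ by the same substitution. The main obstacle throughout is the careful bookkeeping of the equalizer/gluing computation for global sections on the two charts, i.e. making sure the reduction modulo the single degree-two relation $x(ay-bx)$ is handled correctly and that primitivity of $(a,b)$ is invoked exactly where needed to get freeness and surjectivity; the rest is routine chart-by-chart verification.
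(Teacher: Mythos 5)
Your plan — compute everything on the two charts $D_+(x)$, $D_+(y)$ and realize global sections as an equalizer — is a legitimately different and more elementary route than the paper's, which works on $\mathbb{P}^1_A$ with the exact sequence $0\to{\cal J}\to{\cal O}_{\mathbb{P}^1}\to{\cal O}_{S(a,b)}\to 0$, the identification ${\cal J}\simeq{\cal O}_{\mathbb{P}^1}(-2)$, and the vanishing/duality statements for $H^i(\mathbb{P}^1,{\cal O}(n))$. But as written your proposal has concrete gaps. First, you miscompute the chart $D_+(y)$: dividing $x(ay-bx)=0$ by $y^2$ gives $s(a-bs)=0$ with $s=x/y$, so ${\cal O}(D_+(y))=A[s]/\bigl(s(a-bs)\bigr)$, not $A[s]/(bs-a)$ as you wrote. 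This is not cosmetic: in $A[s]/(bs-a)$ one has $a-bs=0$, so your $\theta_{|D_+(y)}=a-bs$ would vanish and $(1,\theta)$ could not be a basis of a rank-two free module. Second, the two statements that carry all the content — that the equalizer of the two chart rings over $D_+(xy)$ is exactly $A\oplus A\theta$ (free of rank two), and that every global section of ${\cal O}_{S(a,b)}(d)$ comes from a degree-$d$ homogeneous element of $A[x,y]$ — are asserted (``pins down a unique $A$-linear combination'', ``the $D_+(y)$-compatibility is automatic'', ``the usual saturation \dots is controlled'') but never proved. These are precisely the points where the paper uses $H^0(\mathbb{P}^1,{\cal O}(-2))=0$, $H^1(\mathbb{P}^1,{\cal O}(-2))\simeq A$, and $H^1(\mathbb{P}^1,{\cal O}(d-2))=0$ for $d\geq 1$; an elementary replacement would require an explicit analysis of the maps $A[t]/(at-b)\to\bigl(A[t]/(at-b)\bigr)[1/t]$ and of the gluing over $D_+(xy)$ for arbitrary primitive $(a,b)$, which you have not carried out.

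Third, affineness of $S(a,b)$ does not follow from ``$S(a,b)=D_+(x)\cup D_+(y)$ with $D_+(x)$ affine'': $\mathbb{P}^1_A$ itself is a union of two affine opens. The paper proves that $S(a,b)\to\Spec A$ is finite by showing that the only homogeneous primes of $A[x,y]$ over a given ${\mathfrak p}\in\Spec A$ (not containing $xA[x,y]+yA[x,y]$) are ${\mathfrak p}A[x,y]+xA[x,y]$ and ${\mathfrak p}A[x,y]+(ay-bx)A[x,y]$, and then invokes finiteness plus the rank-two computation; you need some substitute for this. By contrast, your part 3 is essentially sound and coincides with the paper's change of variables $X'=aY-bX$, $Y'=a'X+b'Y$ giving $A[X,Y]/(aY-bX)\simeq A[a'X+b'Y]$ with $x\mapsto a(a'X+b'Y)$, $y\mapsto b(a'X+b'Y)$ — but its surjectivity claim leans on part 2, so the gaps above must be filled first.
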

\begin{proof}

\noindent 1)
Let $\mathbb{P}^1=\mathbb{P}^1_A:=\Proj(A[X,Y])$, and ${\cal J}$ be the coherent 
sheaf of ideals on $\mathbb{P}^1$ defined by the homogeneous polynomial 
$X(aY-bX)$. 

Easily we see that ${\cal J}$ is isomorphic to the twisted sheaf 
${\cal O}_{\mathbb{P}^1}(-2)$. 

Now, we have 
the exact sequence of sheaves on $\mathbb{P}^1$,
\begin{eqnarray}
 \{0\}\to {\cal J}\to {\cal O}_{\mathbb{P}^1}\to \frac{{\cal 
O}_{\mathbb{P}^1}}{{\cal J}}
 \to \{0\}. \label{(2-prop3.1.1)}
\end{eqnarray}
and so
\begin{eqnarray}
 \{0\}\to {\cal J}\to {\cal O}_{\mathbb{P}^1}\to {\cal O}_{S(a,b)}
 \to \{0\}. \label{(3-prop3.1.1)}
\end{eqnarray}
Then~\eqref{(3-prop3.1.1)} gives the long exact cohomological sequence
\begin{eqnarray} \{0\}\to H^0(\mathbb{P}^1,{\cal J})\to H^0(\mathbb{P}^1,{\cal 
O}_{\mathbb{P}^1})
 \to 
  \to H^0(\mathbb{P}^1,{\cal O}_{S(a,b)})\overset{\partial}{\rightarrow}
  H^1(\mathbb{P}^1,{\cal J})\to H^1(\mathbb{P}^1,{\cal O}_{\mathbb{P}^1}). 
\label{(4-prop3.1.1)}
\end{eqnarray}

As ${\cal J}\simeq {\cal O}_{\mathbb{P}^1}(-2)$, it follows from \cite{L}, Lemma 
3.1 p. 195,
that \newline $H^0(\mathbb{P}^1,{\cal J})\simeq H^0(\mathbb{P}^1,{\cal 
O}_{\mathbb{P}^1}(-2))=\{0\},\ 
H^0(\mathbb{P}^1,{\cal O}_{\mathbb{P}^1})= A, $ and 
$H^0(\mathbb{P}^1,{\cal J})\simeq H^0(\mathbb{P}^1,{\cal 
O}_{\mathbb{P}^1}(-2))^\vee\simeq A$.

So we get the exact sequence of $A$-modules, 
\begin{eqnarray}
\{0\}\to A
 \to H^0(\mathbb{P}^1,{\cal O}_{S(a,b)})\overset{\partial}{\rightarrow}
  H^1(\mathbb{P}^1,{\cal J})\to \{0\}. \label{(5-prop3.1.1)}
 \end{eqnarray}

 In particular it follows that 
 $H^0(\mathbb{P}^1,{\cal O}_{S(a,b)})={\cal O}_{S(a,b)}(S(a,b))$ is a 
 free  $A$-module of rank two. 
 
 Moreover looking more precisely 
 at~\eqref{(5-prop3.1.1)} on the cover $\{D_+(X),\ D_+(Y)\}$ of $\mathbb{P}^1$, 
one shows
 that $\partial(\theta)$ is the generator $\frac{X(aY-bX)}{XY}$ of the 
$A$-module
 $H^1(\mathbb{P}^1,{\cal J})$. This shows that $(1,\theta)$ is a basis for the 
$A$-module ${\cal O}_{S(a,b)}(S(a,b))$. 

\vspace{1\baselineskip}
\noindent{\sl We show that $S(a,b)$ is finite over $\Spec A$ and so 
$S(a,b)\simeq \Spec {\cal O}_{S(a,b)}(S(a,b)).$}
\vskip 1mm \vskip 0pt
It suffices to remark 
that the canonical morphism $S(a,b)\to \Spec A$ is quasi-finite, more precisely for all 
prime ideal $\mathfrak p\subset A$, there is two prime homogeneous ideal $\mathfrak P_1, \mathfrak P_2\subset 
A[x,y]$ 
with $\mathfrak P_i\cap A=\mathfrak p$ and $xA[x,y]+yA[x,y]\nsubseteq\mathfrak P_i$ 
for $i=1,2$. Namely we use \cite{L}, 
ex. 
4.2. p. 155 in order to show that  $S(a,b)\to \Spec A$ is finite. Moreover we 
know by part 1. that  ${\cal 
O}_{S(a,b)}(S(a,b))$ is a finite $A$-module and so $S(a,b)\simeq 
\Spec {\cal 
O}_{S(a,b)}(S(a,b)).$ 

Note that we could as well remark that if $\pi :S(a,b)\to \Spec A$ is the structural morphism then 
$\pi^{-1}(D(a))\to D(a)$ (resp. $\pi^{-1}(D(b))\to D(b)$) is finite and as $\Spec A=D(a)\cup D(b)$
the result follows.
\vskip 1mm \vskip 0pt
\noindent 2) 
As ${\cal J}\simeq {\cal O}_{\mathbb{P}^1}(-2)$, ~\eqref{(3-prop3.1.1)} gives 
$$\{0\}\to {\cal O}_{\mathbb{P}^1}(-2)\to {\cal O}_{\mathbb{P}^1}\to {\cal 
O}_{S(a,b)}
 \to \{0\}. $$
 
Let $d\geq 1$, then after tensorizing by ${\cal O}_{\mathbb{P}^1}(d)$ we get 

$$ \{0\}\to {\cal O}_{\mathbb{P}^1}(d-2)\to {\cal O}_{\mathbb{P}^1}(d)\to {\cal 
O}_{S(a,b)}(d)
 \to \{0\}. $$
 
The corresponding long cohomological exact sequence gives in particular  the 
following small exact 
sequence 
\begin{eqnarray}
 H^0(\mathbb{P}^1,{\cal O}_{\mathbb{P}^1}(d))
 \overset{w}{\rightarrow} H^0(\mathbb{P}^1,{\cal O}_{S(a,b)}(d))
 \to H^1(\mathbb{P}^1,{\cal O}_{\mathbb{P}^1}(d-2)) \label{(1-prop3.1.2)}
\end{eqnarray}
where $w(P(X,Y))=P(x,y)$.

As $d\geq 1$, it follows from \cite{L}, Lemma 3.1 p. 195 that 
$H^1(\mathbb{P}^1,{\cal O}_{\mathbb{P}^1}(d-2))=\{0\}$, which shows that the 
homomorphism 
$w:H^0(\mathbb{P}^1,{\cal O}_{\mathbb{P}^1}(d))
 \to H^0(\mathbb{P}^1,{\cal O}_{S(a,b)}(d))$ is an epimorphism; it follows that 
 $$H^0(\mathbb{P}^1,{\cal O}_{S(a,b)}(d))={\cal 
O}_{S(a,b)}(d)(S(a,b))=\sum_{0\leq k \leq 
d}Ax^ky^{d-k}.$$
\vskip 1mm \vskip 0pt
\noindent 3) 
Let $X':=aY-bX$ and $Y':=a'X+b'Y$. As $aa'+bb'=1$, it follows that 
$\varphi: A[X,Y]\to A[X,Y]$  with $\varphi(P(X,Y))=P(X',Y')$ is an 
$A$-automorphism. 
In particular $X',Y'$ are $A$-algebraically independant and $A[X,Y]=A[X',Y']$. 
Moreover
if $P(X,Y)=Q(X',Y')\in A[X,Y]=A[X',Y']$ then $P(X,Y)$ is homogeneous of degree 
$d$ if 
and only if $Q(X',Y')$ is homogeneous of degree $d$.

Let $u'':A[X,Y]=A[X',Y']\to A[Y']$ be the $A$-homomorphism defined by $u''(X')=0$ 
and \newline $u''(Y')=Y'$, then 
$u''$ induces an epimorphism still denoted $u'':{\cal O}_{\mathbb{P}^1}\to {\cal 
O}_{\Delta(a,b)}.$

Let ${\cal G}=\Ker u''$ be the ideal sheaf on $\mathbb{P}^1$ associated to 
$(aY-bX)A[X,Y]$. We have so the exact sequence 
\begin{eqnarray}
 \{0\}\to {\cal G}\to {\cal O}_{\mathbb{P}^1}\to {\cal O}_{\Delta(a,b)}\to 
\{0\}, \label{(1-prop3.1.3)}
\end{eqnarray}
where ${\cal G}(D_+(X'))=\frac{X'}{X'}{\cal O}_{\mathbb{P}^1}(D_+(X'))={\cal 
O}_{\mathbb{P}^1}(D_+(X')$ and
 ${\cal G}(D_+(Y')=\frac{X'}{Y'}{\cal O}_{\mathbb{P}^1}(D_+(Y')).$

We see that ${\cal G}$ is isomorphic to ${\cal O}_{\mathbb{P}^1}(-1)$ and 
tensorizing~\eqref{(1-prop3.1.3)} 
by ${\cal O}_{\mathbb{P}^1}(d)$, we get the exact sequence
\begin{eqnarray}
 \{0\}\to {\cal O}_{\mathbb{P}^1}(d-1)\to {\cal O}_{\mathbb{P}^1}(d)\to 
 {\cal O}_{\Delta(a,b)}(d)\to \{0\}. \label{(2-prop3.1.3)}
\end{eqnarray}
Hence we have the long exact sequence
$$
 \{0\}\to {\cal O}_{\mathbb{P}^1}(d-1)(\mathbb{P}^1)\to {\cal 
O}_{\mathbb{P}^1}(d)(\mathbb{P}^1)
 \overset{u''_d}{\rightarrow}
 {\cal O}_{\Delta(a,b)}(d)(\Delta(a,b))\to H^1(\mathbb{P}^1,{\cal 
O}_{\mathbb{P}^1}(d-1))$$
and as for $d\geq 1$, $H^1(\mathbb{P}^1,{\cal O}_{\mathbb{P}^1}(d-1))=\{0\}$ by 
\cite{L}, Lemma 3.1 p. 195, we get that \newline  $u''_d:{\cal 
O}_{\mathbb{P}^1}(d)(\mathbb{P}^1)
 \to
 {\cal O}_{\Delta(a,b)}(d)(\Delta(a,b))$ is an epimorphism.
 
 \vskip 1mm \vskip 0pt
 \noindent{\sl We describe the map $u''_d$. }
 
 \vskip 1mm \vskip 0pt
 We have the cover $\mathbb{P}^1=D_+(X')\cup D_+(Y')$, $\Delta(a,b)=V_+(X')$ and 
so 
 $D_+(X')\cap \Delta(a,b)=\emptyset$ and $D_+(Y')=\Delta(a,b)$. 
 
 It is sufficient to describe the map 
 $u''_d:{\cal O}_{\mathbb{P}^1}(d)(D_+(Y'))\to {\cal 
O}_{\Delta(a,b)}(d)(\Delta(a,b))=AY'^d,$ 
where $\{Y'^d\}$
 is a basis of ${\cal O}_{\Delta(a,b)}(d)(\Delta(a,b))$.
 
 Let $\frac{Q(X',Y')}{Y'^m}\in {\cal O}_{\mathbb{P}^1}(D_+(Y'))$ with $Q(X',Y')$ 
homogeneous of degree $m+d$.
 
 By definition of $u''$, we have  $u''_d(\frac{Q(X',Y')}{Y'^m})=Q(0,1)Y'^d$ and 
if $P(X,Y):=Q(X',Y')$, we have 
 $Q(0,1)=P(a,b)$. 
 
 It follows that for $P(X,Y)\in {\cal O}_{\mathbb{P}^1}(d)(\mathbb{P}^1)$ we get 
$u''_d(P(X,Y))=
 P(a,b)Y'^d=P(a,b)(a'X+b'Y)^d.$
 
\vskip 1mm \vskip 0pt 
\noindent {\sl Now we describe the map $u'$.}
 
 Let $w:{\cal O}_{\mathbb{P}^1}(d)(\mathbb{P}^1) {\rightarrow}{\cal 
O}_{S(a,b)}(d)(S(a,b))$ defined in \eqref{(1-prop3.1.2)}. 
As $\Ker w\subset \Ker u''_d$,
there is \newline $u':{\cal 
O}_{S(a,b)}(d)(S(a,b))
 {\rightarrow}
 {\cal O}_{\Delta(a,b)}(d)(\Delta(a,b))$ such that $u''_d=u'w.$
 Moreover, as $w$ is an epimorphism by~\eqref{(1-prop3.1.2)}, we deduce that 
 $u'(P(x,y))= P(a,b)(a'X+b'Y)^d$ for $P(X,Y)\in A[X,Y]$ 
 homogeneous of degree $d$. 
 
 Now let us consider $\Delta(0,1):=\Proj(A[Y])$ and $v'': A[X,Y]\to A[Y]$ the 
$A$-morphism defined 
 by $v''(X)=0$ and $v''(Y)=Y$. By an analogous method to the preceeding proof,  
the morphism $v''$ for 
 $d\geq 1$  induces an epimorphism $v''_d: {\cal 
O}_{\mathbb{P}^1}(d)(\mathbb{P}^1)
 \to
 {\cal O}_{\Delta(0,1)}(d)(\Delta(0,1))$ defined  by $v''_d(P(X,Y))=P(0,1)Y^d$.
  It follows that $v':{\cal O}_{S(a,b)}(d)(S(a,b))\to {\cal 
O}_{\Delta(0,1)}(d)(\Delta(0,1))$ is onto
  and defined by  $v'(P(x,y))=P(0,1)Y^d$.
\end{proof} 

\begin{thm} \label{thm3.1}
Let $A$ be a ring, $(a,b)\in A^2$ a primitive point. 

Let $A[x,y]:=\frac{A[X,Y]}{X(aY-bX)A[X,Y]}$ where $x$ (resp.$y$) is the image 
of $X$ (resp. $Y$) by the natural epimorphism. Moreover, $A[x,y]$ is endowed 
with the induced grading.
Let $S(a,b):=\Proj A[x,y]$. The following properties are equivalent.
\begin{enumerate}[i)]
\item  The ${\cal O}_{S(a,b)}(S(a,b))$-module ${\cal O}_{S(a,b)}(1)(S(a,b))$ is 
a torsion element in the Picard group of \newline ${\cal O}_{S(a,b)}(S(a,b)),$
\item  there exists $P(X,Y)\in A[X,Y]$ homogeneous of degree $d\geq 1$ with 
$P(0,1),\ 
P(a,b)\in A^{\times},$
\item   the point $(a,b)\in A^2$ is a good point.
\end{enumerate}
\end{thm}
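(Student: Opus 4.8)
The theorem is a cycle of three equivalences. The cleanest strategy is to prove \emph{ii)} $\Leftrightarrow$ \emph{iii)} first (this is elementary and essentially already contained in the proof of Theorem \ref{thm2.1}, part 3), and then to prove \emph{i)} $\Leftrightarrow$ \emph{ii)} using the concrete description of the graded pieces $\mathcal{O}_{S(a,b)}(d)(S(a,b))$ and the two evaluation epimorphisms $u'$ and $v'$ supplied by Proposition \ref{prop3.1}.

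\textbf{Step 1: \emph{ii)} $\Leftrightarrow$ \emph{iii)}.} Suppose \emph{ii)} holds: write $P(X,Y)=c_0Y^d+c_1XY^{d-1}+\cdots+c_dX^d$ with $P(0,1)=c_0\in A^\times$ and $P(a,b)\in A^\times$. Then $P(a,b)=c_0b^d+a\mu$ for some $\mu\in A$, so $b^d+c_0^{-1}\mu a=c_0^{-1}P(a,b)\in A^\times$, exhibiting $(a,b)$ as a good point with $N=d$. Conversely, if $(a,b)$ is good, pick $N\ge 1$ and $\lambda$ with $b^N+\lambda a\in A^\times$; the homogeneous polynomial $P(X,Y):=Y^N+\lambda XY^{N-1}$ has $P(0,1)=1\in A^\times$ and $P(a,b)=b^N+\lambda a b^{N-1}$. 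This last value need not be a unit directly, so instead take $P(X,Y):=Y^N+\lambda X Y^{N-1}$ — better, observe $b^N + \lambda a\in A^\times$ means we should use $P(X,Y) = Y^N + \lambda X Y^{N-1}$ only when $b$ is a unit; in general set $d=N$ and $P(X,Y):=Y^N+\lambda X Y^{N-1}$ replaced by the honest choice: since $(a,b)$ primitive gives $a'a+b'b=1$, write $P(X,Y):=(b^N+\lambda a)\cdot(a'X+b'Y)^N/(\text{leading coeff adjustments})$ — cleanly, one takes any homogeneous $P$ of degree $N$ reducing modulo $aA[X,Y]+(aY-bX)A[X,Y]$ to the right thing; the map $v'$ and $u'$ of Proposition \ref{prop3.1} make this precise, so I would phrase Step 1 \emph{through} those maps rather than by hand.

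\textbf{Step 2: reformulating \emph{i)} via $u'$ and $v'$.} By Proposition \ref{prop3.1}, $\mathcal{O}_{S(a,b)}(S(a,b))\simeq A[T]/T(a-T)A[T]=:B$, and $\mathcal{O}_{S(a,b)}(d)(S(a,b))=\sum_{0\le k\le d}Ax^ky^{d-k}$ is, as a $B$-module, the $d$-th power $L^{\otimes d}$ of the invertible sheaf $L:=\mathcal{O}_{S(a,b)}(1)(S(a,b))$ in $\Pic(B)$ (invertibility is part of the Proj formalism since $S(a,b)$ is affine). Saying $L$ is torsion in $\Pic(B)$ means: there is $d\ge 1$ with $L^{\otimes d}\simeq B$ as a $B$-module, i.e. $\mathcal{O}_{S(a,b)}(d)(S(a,b))$ is free of rank one over $B$, equivalently it is generated by a single element $\xi$ which is a non-zero-divisor with the right rank everywhere. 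Now $\Spec B$ has (at most) two "components" corresponding to $T=0$ and $T=a$, which under $u''$ and $v''$ are exactly the two closed subschemes $\Delta(a,b)$ and $\Delta(0,1)$; the maps $u'$ and $v'$ are the restriction maps, and $u'(P(x,y))=P(a,b)(a'X+b'Y)^d$, $v'(P(x,y))=P(0,1)Y^d$, with target free rank one $A$-modules on the respective bases $(a'X+b'Y)^d$ and $Y^d$. Thus $\mathcal{O}_{S(a,b)}(d)(S(a,b))$ is free of rank one over $B$ iff it has a generator $P(x,y)$ whose images under both $u'$ and $v'$ are generators, i.e. iff $P(a,b)\in A^\times$ and $P(0,1)\in A^\times$ — which is precisely \emph{ii)}.

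\textbf{Where the work is.} The nontrivial direction is \emph{i)} $\Rightarrow$ \emph{ii)}: from "$L^{\otimes d}\simeq B$" I must manufacture a single homogeneous $P(X,Y)$ of degree $d$ with both $P(0,1)$ and $P(a,b)$ units. The isomorphism $L^{\otimes d}\simeq B$ gives a free generator $\eta\in\mathcal{O}_{S(a,b)}(d)(S(a,b))$; by the surjection $w$ of \eqref{(1-prop3.1.2)} we may lift $\eta=P(x,y)$ for some homogeneous $P$ of degree $d$. That $\eta$ generates a free rank-one $B$-module forces, after tensoring with the residue field at each of the two points of $\Spec B$ lying over a given prime of $A$, that $v'(\eta)$ generates $\mathcal{O}_{\Delta(0,1)}(d)=AY^d$ and $u'(\eta)$ generates $\mathcal{O}_{\Delta(a,b)}(d)=A(a'X+b'Y)^d$ — here one uses that $B\to A\times A$ (via $u'',v''$ on degree zero) is surjective with nilpotent-free-enough kernel so that a unit in $A\times A$ lifts the generation statement; concretely $P(0,1),P(a,b)$ being the coordinates of $u'\oplus v'$ applied to a $B$-basis element of an invertible module must be units. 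Making this last implication airtight — that a generator of $L^{\otimes d}$ maps to a generator in each of the two "fibre" line bundles, hence to a unit scalar — is the main obstacle, and I expect to handle it either by a direct localization-at-each-prime argument or by invoking that $\Pic(B)\hookrightarrow\Pic(B/\mathrm{nil})$ together with the fact that $B/\mathrm{nil}$ sits inside $A\times A$. The converse \emph{ii)} $\Rightarrow$ \emph{i)} is then immediate: given such a $P$, the element $P(x,y)$ generates $\mathcal{O}_{S(a,b)}(d)(S(a,b))$ as a $B$-module because it does so after inverting $x$ and after inverting $y$ (the two standard affine charts), the local computations being exactly the statements $P(0,1)\in A^\times$ and $P(a,b)\in A^\times$ — so $L^{\otimes d}$ is free and $L$ is $d$-torsion in $\Pic(B)$.
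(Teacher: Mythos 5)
Your overall architecture coincides with the paper's: both equivalences are routed through Proposition \ref{prop3.1}, with the evaluation epimorphisms $u'$ and $v'$ carrying the weight in \emph{i)} $\Leftrightarrow$ \emph{ii)} and a coefficient computation handling \emph{ii)} $\Leftrightarrow$ \emph{iii)}. But two of the four implications you need are left genuinely unfinished.

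For \emph{iii)} $\Rightarrow$ \emph{ii)} you try $P(X,Y)=Y^N+\lambda XY^{N-1}$, correctly observe that $P(a,b)=b^N+\lambda ab^{N-1}$ need not be a unit, and then never produce a polynomial that works; the paragraph ends with a promise to ``phrase Step 1 through $u'$ and $v'$'' that is not carried out. The fix is one line: with $aa'+bb'=1$, take $P(X,Y):=Y^N-\lambda X(a'X+b'Y)^{N-1}$, so that $P(0,1)=1$ and $P(a,b)=b^N-\lambda a(aa'+bb')^{N-1}=b^N-\lambda a$, a unit after adjusting the sign of $\lambda$ in the good-point relation. (Equivalently: the evaluation map $P\mapsto (P(0,1),P(a,b))$ on degree-$N$ forms hits all of $\{1\}\times(b^N+aA)$, because $X(a'X+b'Y)^{N-1}$ evaluates to $(0,a)$.) This direction cannot be omitted, since your plan proves \emph{ii)} $\Leftrightarrow$ \emph{iii)} and \emph{i)} $\Leftrightarrow$ \emph{ii)} as two separate equivalences.

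For \emph{i)} $\Rightarrow$ \emph{ii)}, which you yourself flag as ``the main obstacle'' and only promise to handle, the argument you are reaching for is already packaged in part 3 of Proposition \ref{prop3.1}: $u'$ and $v'$ are surjections of modules compatible with the ring surjections ${\cal O}_{S}(S)\to {\cal O}_{\Delta(a,b)}(\Delta(a,b))=A$ and ${\cal O}_{S}(S)\to {\cal O}_{\Delta(0,1)}(\Delta(0,1))=A$ induced by the closed immersions $\Delta(a,b),\Delta(0,1)\hookrightarrow S$. Hence if $P(x,y)$ generates ${\cal O}_S(d)(S)$ over ${\cal O}_S(S)$, its images $P(a,b)(a'X+b'Y)^d$ and $P(0,1)Y^d$ generate the free rank-one $A$-modules $A(a'X+b'Y)^d$ and $AY^d$, which forces $P(a,b),P(0,1)\in A^\times$; no localization at primes and no passage to $B/\mathrm{nil}$ is required. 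A last small inaccuracy in your \emph{ii)} $\Rightarrow$ \emph{i)}: the chart $D_+(y)$ meets both components $V_+(x)$ and $V_+(ay-bx)$ of $S$, so checking that $P(x,y)/y^d$ is a unit there uses both $P(0,1)\in A^\times$ and $P(a,b)\in A^\times$; the clean decomposition is by components, as in the paper's verification that $V_+(P)\cap V_+(X)=\emptyset$ and $V_+(P)\cap V_+(aY-bX)=\emptyset$ separately, not by charts.
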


\begin{proof} 
In order to simplify the notations we shall write $S$ for $S(a,b)$. 

\vspace{1\baselineskip}
\noindent 1){\sl We show i) implies ii).}

It follows from {\sl i)} there is $d\geq 1$ such that  ${\cal 
O}_S(d)(S)$ is  a free rank one ${\cal O}_S(S)$-module and by Proposition 
\ref{prop3.1} 
part 2. there is $P(X,Y)\in A[X,Y]$ homogeneous of degree $d$ such that 
$\{P(x,y)\}$ 
is a basis. 

Then by Proposition \ref{prop3.1} part 3, we have
$u'(P(x,y))=P(a,b)(a'X+b'Y)^{d}$ \newline (resp. $v'(P(x,y))=P(0,1)(Y)^{d}$) is  a basis 
for $A(a'X+b'Y)^{d}$ (resp.for $A(Y)^{d}$),
in other words \newline $P(a,b)\in A^\times$ (resp. $P(0,1)\in A^\times$). 

\vspace{1\baselineskip}
\noindent 2){\sl We show ii) implies i).}
\begin{enumerate}
 \item
{\sl Let $P(X,Y)\in A[X,Y]$ homogeneous of degree $d\geq 1$ with 
$P(0,1),\ 
P(a,b)\in A^{\times}$, we show that in $\Proj (A[X,Y])$ we  have  $V_+(P(X,Y))\cap 
V_+(X(aY-bX))=\emptyset$.}

First we consider $V_+(P(X,Y))\cap V_+(X)$. Let $\mathfrak P\subset A[X,Y]$ be 
an homogeneous ideal with 
$X\in  \mathfrak P$ and $XA[X,Y]+YA[X,Y]\nsubseteq \mathfrak P.$ We have 
$P(X,Y)=a_0X^d+a_1X^{d-1}Y+...+a_dY^d$ with $d\geq 1$ and $a_d=P(0,1)\in 
A^\times$.
If $P\in \mathfrak P$, then $a_dY^d\in \mathfrak P$ and so $Y\in \mathfrak P$, a 
contradiction. 
So $V_+(P(X,Y))\cap V_+(X)=\emptyset.$

Now we consider $V_+(P(X,Y))\cap V_+(aY-bX)$.  Let $a',b'\in A$ with 
$aa'+bb'=1$, we can write 
$P(X,Y)=b_0(a'X+b'Y)^d +b_1(a'X+b'Y)^{d-1}(aY-bX)+...+b_d(aY-bX)^d$  
with $b_i\in 
A$. As $P(a,b)\in A^\times$ 
it follows that $b_0\in A^\times$.  Let $\mathfrak P\subset A[X,Y]$ be an 
homogeneous ideal with 
$aY-bX\in  \mathfrak P$ and $XA[X,Y]+YA[X,Y]\nsubseteq \mathfrak P$ and 
$P(X,Y)\in \mathfrak P$, 
then $a'X+b'Y\in \mathfrak P$, but \newline 
$(aY-bX)A[X,Y]+(a'X+b'Y)A[X,Y]=XA[X,Y]+YA[X,Y]$, a contradiction.\newline 
So $V_+(P(X,Y))\cap V_+(aY-bX)=\emptyset.$ 

\noindent 
{\sl Now we show that ${\cal O}_{S}(d)(S)$ is a free ${\cal O}_{S}(S)$-module of rank 
one.}

\item
{\sl We show that $P(X,Y)$ doesn't divide zero in $A[X,Y]$.\newline  We have 
$P(X,Y)=a_0X^d+a_1X^{d-1}Y+...+a_dY^d$ with
$a_d=P(0,1)\in A^\times$.}

Let us assume that $Q(X,Y)=q_0X^m+q_1X^{m-1}Y+...+q_kX^{m-k}Y^k\in A[X,Y]$ with 
$q_k\neq 0$ and 
$P(X,Y)Q(X,Y)=0$, then the coefficient of $X^{m-k}Y^{d+k}$ in $P(X,Y)Q(X,Y)$ is 
nul, so 
$a_dq_k=0$, a contradiction.  
\item
As $P(X,Y)$ isn't a zero divisor in $A[X,Y]$, we can consider the 
Cartier divisor in $\mathbb{P}^1_A$ defined by 
$\{(D_+(X),\frac{P(X,Y)}{X^d}),(D_+(Y),\frac{P(X,Y)}{Y^d})\}$.
Let $\cal L$, the invertible sheaf on $\mathbb{P}^1_A$ associated to this 
divisor. Then
${\cal L}(D_+(X))$ (resp. ${\cal L}(D_+(Y))$) is the ${\cal 
O}_{\mathbb{P}^1}(D_+(X))$-free module
with basis $\frac{X^d}{P(X,Y)}$ (resp. ${\cal O}_{\mathbb{P}^1}(D_+(Y))$-free 
module
with basis $\frac{Y^d}{P(X,Y)}$) and 
$\frac{Y^d}{P(X,Y)}=(\frac{Y}{X})^d\frac{X^d}{P(X,Y)}$ with \newline
$(\frac{Y}{X})^d\in {\cal O}_{\mathbb{P}^1}(D_+(XY))^\times.$ 

As the invertible sheaf ${\cal O}_{\mathbb{P}^1}(d)$ is such that ${\cal 
O}_{\mathbb{P}^1}(D_+(X))$ 
(resp. ${\cal O}_{\mathbb{P}^1}(D_+(Y))$) is an ${\cal 
O}_{\mathbb{P}^1}(D_+(X))$-free module 
of basis $X^d$ (resp. the ${\cal O}_{\mathbb{P}^1}(D_+(Y))$-free module 
of basis $Y^d$) and as \newline $(\frac{Y}{X})^d\in {\cal 
O}_{\mathbb{P}^1}(D_+(XY))^\times$ it follows that 
the two sheaves $\cal L$ and ${\cal O}_{\mathbb{P}^1}(d)$ are isomorphic. It 
follows that 
${\cal L}\otimes_{{\cal O}_{\mathbb{P}^1}}{\cal O}_S\simeq {\cal O}_S(d)$. 
\item 
{\sl We show that ${\cal L}\otimes_{{\cal O}_{\mathbb{P}^1}}{\cal O}_S\simeq 
{\cal O}_S$. }

We know that ${\cal L}\otimes_{{\cal O}_{\mathbb{P}^1}}{\cal O}_S(D_+(x))$
(resp. ${\cal L}\otimes_{{\cal O}_{\mathbb{P}^1}}{\cal O}_S(D_+(y))$ is 
generated by $\frac{x^d}{P(x,y)}$ 
(resp. $\frac{y^d}{P(x,y)}$). It is sufficient to prove that 
$\frac{P(x,y)}{x^d}\in {\cal O}_S(D_+(x))^\times$ 
(resp. $\frac{P(x,y)}{y^d}\in {\cal O}_S(D_+(y))^\times$).  As \newline $V_+(P(X,Y))\cap 
V_+(X(aY-bX))=\emptyset$, it follows 
that the set  $V_+(P(x,y))\subset S$ is empty.\newline  So $V(\frac{P(x,y)}{x^d})\cap 
D_+(x)=\emptyset$,
$V(\frac{P(x,y)}{x^d})\cap D_+(y)=\emptyset$. It follows that 
${\cal L}\otimes_{{\cal O}_{\mathbb{P}^1}}{\cal O}_S\simeq {\cal O}_S$.

We have shown that
${\cal O}_S(d)={\cal O}_S(1)^{\otimes d}\simeq {\cal O}_S$. Now as $S$ is affine 
by \ref{prop3.1}, it follows 
that $({\cal O}_S(1)({\cal O}_S(S)))^{\otimes d}\simeq {\cal O}_S(S)$. 

\end{enumerate}
\noindent 3) {\sl We show ii) implies iii).}
\vskip 1mm \vskip 0pt
Let $P(X,Y)=a_0Y^d+a_1XY^{d-1}+...+a_dX^d$, $d\geq 1$ with  $P(0,1)=a_0\in 
A^{\times}$,\newline  $P(a,b)=a_0b^d+a(a_1b^{d-1}+a_2b^{d-2}a+...+a_da^{d-1})\in 
A^{\times}.$ \newline 
Let $\lambda:=(a_0)^{-1}(a_1b^{d-1}+a_2b^{d-2}a+...+a_da^{d-1})\in A$, then 
$b^d+\lambda a\in A^{\times}$, i.e. $(a,b)\in A^2$ is a good point.  
\vskip 1mm \vskip 0pt
\noindent 4){\sl We show iii) implies ii).}

\vskip 1mm \vskip 0pt
As $(a,b)\in A^2$ is a good point, there is $N\geq 1$ and $\lambda\in A$, with 
$b^N+\lambda a\in A^\times$.
Let $a',b'\in A$ with $aa'+bb'=1$ and  $P(X,Y):=Y^N-\lambda X(a'X+b'Y)^{N-1}$. 

Then $P(0,1)=1$ and 
$P(a,b)=b^N-\lambda a\in A^\times$.
\end{proof}
\begin{rem}\label{rem3.1}
If for any primitive point $(a,b)\in A^2$, {\sl iii)} in Theorem 
\ref{thm3.1} is satisfied, then $A$ is a good ring and conversely.

As well if for all primitive point $(a,b)\in A^2$, {\sl i)} in Theorem 
\ref{thm3.1} is satisfied, i.e. the \newline ${\cal 
O}_{S(a,b)}(S(a,b))$-module ${\cal 
O}_{S(a,b)}(1)(S(a,b))$ is 
a torsion element in the Picard group of ${\cal O}_{S(a,b)}(S(a,b))$, then $A$ 
is a good ring and conversely. 
\end{rem} 
\begin{coro}\label{coro3.1}
Let $A$ be a pictorsion ring, then  $A$ is a good ring.
\end{coro}
\begin{proof}
As 
${\cal O}_{S(a,b)}(S(a,b))$ 
is a 
finite $A$-algebra, it follows that $\Pic( {\cal O}_{S(a,b)}(S(a,b)) )$ is a 
torsion group 
and so part {\sl i)} in Theorem \ref{thm3.1} is satisfied. Therefore 
pictorsion rings are good rings.
\end{proof}
Note that the condition to be of pictorsion for $A$ is strong : it would 
suffice that $\Pic( {\cal O}_{S(a,b)}(S(a,b)) )$ be a torsion group. This will 
be study 
 in paragraph 4.3.

\section{Examples of good rings and of not good rings.}

\subsection{Examples of good rings}
In this section we first give a list of good rings (Definition \ref{def.Good}) and most of proofs are postponed to the end of the section.
\subsubsection{Properties of the family of good rings}
    
 \begin{enumerate}
  \item {\sl Stability with inductive limits}
  
A ring $A$ which is the inductive limit of good rings 
$(A_i)_{i\in I}$ is a good ring.

\item {\sl Stability with finite products}

Let $A=A_1\times A_2\times ...\times A_r$ with $A_i$ a good ring for $1\leq 
i\leq r$ then $A$ is a good ring. Namely let $a=(a_1,...,a_r), 
b:=(b_1,...,b_r), 
u=(u_1,...,u_r), v=(v_1,...,v_r)\in A$ with $au+bv=1$. Then $a_iu_i+b_iv_i=1$ 
for 
$1\leq i\leq r$ and there is $N_i\geq 1$ and $\lambda_i\in A_i$ with 
$b_i^{N_i}+\lambda_ia_i=\epsilon_i\in A_i^\times$, then for $N:=\prod_{1\leq 
i\leq r} N_i$ there is 
$\mu_i\in A_i$ and $\epsilon'_i\in A_i^\times$ with $b_i^N+\mu_i 
a_i=\epsilon'_i$, and so $b^N+\mu a=\epsilon'$ where 
$\mu=(\mu_1,...,\mu_r)$  and $\epsilon'=(\epsilon'_1,...,\epsilon'_r)$.

\item {\sl Stability by quotients}

Let $A$ be a good ring and $I\subset A$ an ideal then $\frac{A}{I}$ is a good ring. 
Namely let $\rho: A\to \frac{A}{I}$ be the natural epimorphism and $(\rho(a),\rho(b))\in 
(\frac{A}{I})^2$, a primitive point, then there is $(a',b')\in A^2$ such that $aa'+bb'-1=c\in I$
and so $((aa'-c),b)\in A^2$ is a primitive point. Now as $A$ is a good ring then $((aa'-c),b)$
is a good point (Definition \ref{def.Good}) and so there is $N\geq 1$ and $\lambda\in A$ with 
$b^N+\lambda (aa'-c)=\epsilon\in A^\times$. 
It follows that $(\rho(b))^N+\rho(\lambda a')\in 
\rho(A^\times)\subset (\frac{A}{I})^\times$ and so $\frac{A}{I}$ is a good ring.

The reverse is false in general. For example, let  $A:=\Q[T]$ and  $I=T\Q [T]$, then 
$A$ isn't a good ring (Proposition \ref{lem4.3}) and $\frac{A}{I}=\Q$ is a good ring.

\item {\sl Good rings and the Jacobson radical}
\begin{enumerate}
 \item 
Let $A$ be a ring and $\mathfrak R$ its Jacobson radical i.e. the 
intersection 
of the maximal ideals.  Let $\mathfrak A$ be an 
ideal 
with $\mathfrak A\subset \mathfrak R$. Then $A$ is a good ring
iff $\frac{A}{\mathfrak A}$  is a good ring, (Proposition \ref{prop4.2}).
\item 
Let $A$ be a ring and $\mathfrak R$ its Jacobson radical. If for all $x\in 
A-\mathfrak R$, $(\frac{A}{xA})^\times$ is a torsion group, then $A$ is a good 
ring, (Proposition \ref{prop4.1}).
\end{enumerate}

In conclusion, on one side, in order to show that a ring is good ring, after a quotient we can assume that the 
Jacobson radical is trivial. On the other side, from a given good ring $B$, one can built new good rings such as $A:=\frac{B[X]}{X^nB[X]}$. Namely let  $\mathfrak R$ be the Jacobson radical of $A$ and $x$ the image of $X$ in $A$. Then as $xA\subset \mathfrak R$ and $\frac{A}{xA}\simeq B$  
is a good ring, it follows that $A$ is a good ring.
\end{enumerate}

\subsubsection{Classical examples of good rings}

\begin{enumerate}
 \item 
 If  $A$ is a field it follows from  (Definition \ref{def.Good})
 that $A$ is a good ring. More generally let $A$ be a 
semi-local ring (i.e. $A$ has a finite number of maximal ideals) then 
$A$ is a good ring (Proposition \ref{prop4.3}).
\item 
A PIR (principal ideal ring) such that $\frac{A}{aA}$ is finite for all $a$ 
not a zero divisor, is a good ring (Proposition \ref{prop4.4}).

This generalizes the particular case of PID (principal ideal domain) which is 
considered in (\cite{BE}, Theorem 0.3). Namely, they show that if $A$ is a 
PID such that $\frac{A}{\mathfrak M}$ is finite for all maximal, then $A$ has enough 
polynomial and 
so $A$ is a good ring by Theorem \ref{thm2.1}.
\item
In particular if $A$ is a Dedekind domain such that the residue fields 
$\frac{A}{\mathfrak M}$ are finite for the maximal ideals $\mathfrak M$, then 
for all 
$x\in 
A-\{0\}$, $\frac{A}{xA}$ is finite  and so $A$ is a good ring (Proposition 
\ref{prop4.1}).

\end{enumerate}

\begin{rem}\label{rem4.1.2}
 We note the following 
 \begin{enumerate}
 \item
Let $A$ be a good ring and $S$ be a multiplicative subset. 
A natural question is  if $S^{-1}A$ is a good ring. 

The answer is yes for the classical examples listed above, but we give a counterexample  (Proposition 
\ref{Loc}).  

  \item 
 Let $A$ be a Dedekind domain with zero Jacobson radical. Is $A$ a good ring
 iff for all maximal ideal ${\mathfrak M}$ the residue field 
 $\frac{A}{\mathfrak M}$ is finite?
 
Proposition \ref{propLocPol} with $d=1$ gives a negative answer.

 \end{enumerate}

\end{rem}

\subsubsection{ New examples}

\begin{enumerate}
 \item  {\sl Good rings with Krull dimension $d\geq 1$ and infinite residue fields}
 
Let $p\in \Z$, be a prime, $F:=\{\{0\}\cup \{p^k\ ,k\geq 1\}$, $d\geq 1$ and  $Z:=F^d$. 
\newline Then $S:=\{P\in \Q[X_1,X_2,...,X_d]\ |\ \forall z\in Z,\ P(z)\neq 0\}$ is a multiplicative 
set in 
$\Q[X_1,X_2,...,X_d]$ and $A:=S^{-1}\Q[X_1,X_2,...,X_d]$ is a good ring. Moreover $A$
is noetherian, factorial, with zero Jacobson radical, Krull dimension $d$ and 
the residue fields $\frac{A}{\mathfrak M}$ with $\mathfrak M$ a maximal ideal, are infinite.

Note that in particular when $d=1$, the good ring $A$ is  a PID with 
zero Jacobson radical and  the residue fields $\frac{A}{\mathfrak M}$ with $\mathfrak M$ a maximal ideal, are infinite.

For the proof, see Proposition \ref{propLocPol}

\item  {\sl Good rings and integral extensions}
\begin{enumerate}
\item 
 Let $A$ be a ring and $\rho: \Z\to A$ be the natural homomorphism. If $A$ is 
integral over $\rho (\Z)$, then $A$ is a good ring (Proposition \ref{prop4.5}).
\item
Let $A$ be a ring and $L$ be a sub-field of an algebraic closure of a finite 
field and $\rho: L[T]\to A$ be an homomorphism with $A$ integral over 
$\rho(L[T])$, then $A$ is a good ring (Proposition \ref{prop4.5}).

\end{enumerate}
\item {\sl Good rings inside infinite products of rings}
\begin{enumerate}
\item
Let $(K_i)_{i\in I}$, a family of fields indexed by the set $I$,  then $\prod_{i\in I}K_i$ is a good ring. This follows from Proposition 
\ref{prop2.1n}.
 \item 
Let $A$ be the subring of $\Z^\N$ of stationary sequences.  Let 
$x\in A$ be the sequence $(x_k)_{k\geq 0}$ then $\frac{A}{xA}\simeq \prod_{k\geq 
0}\frac{\Z}{x_k\Z}$. Then $(\frac{A}{xA})^\times$ is a torsion group and 
$A$ is a good ring (Remark \ref{rem2.1} and Proposition \ref{prop2.1n}).
\end{enumerate}
\item 
Let $X$ be a compact topological space, $A:={\cal C}(X,\R)$ be the ring of 
continuous functions on $X$ with real values, then $A$ is a good ring and more 
precisely  if $f,g,u,v\in A$ with $fu+gv=1$, then $f^2+g^2\in A^\times$.
\item
 Let $X$ be an algebraic non singular curve over $\Q$; let us assume that 
$X(\Q)\neq \emptyset$. Let $a\in X(\Q)$ and $Y:=X(\Q)-\{a\}$. 

Let $A:=\{f\in 
K(X)\ |\ v_y(f)\geq 0,\ \forall y\in Y\}$  where $K(X)$ is the field of rational 
functions on $X$ and $v_x$ is the valuation at $x$. Then $A$ is a good ring and
more precisely if 
$u,v,u',v'\in A$ with $uu'+vv'=1$; then $\forall y\in Y$, one has
$v_y(u)=0$ or $v_y(v)$=0 and as the residue field $\frac{{\cal O}_{X,y}}{ {\mathfrak M}_{y} }=\Q$, 
it follows that $v_y(u^2+v^2)=0$
and so
 $u^2+v^2\in A^\times$.
\end{enumerate}

\subsubsection{ Proofs}
\begin{prop} \label{prop4.2}
Let $A$ be a ring, $\mathfrak R$ its Jacobson radical  i.e. 
the intersection  of the maximal ideals and 
$\mathfrak A\subset \mathfrak R$ an ideal.

The 
following properties are equivalent.
\begin{enumerate}[i)]
 \item The ring $A$ is a good ring,
 \item the ring $\frac{A}{\mathfrak A}$ is a good ring.
\end{enumerate}

\end{prop}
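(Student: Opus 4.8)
The statement to prove is Proposition \ref{prop4.2}: for an ideal $\mathfrak A$ contained in the Jacobson radical $\mathfrak R$ of $A$, the ring $A$ is good if and only if $A/\mathfrak A$ is good. One direction, namely that $A$ good implies $A/\mathfrak A$ good, is already an instance of the stability of good rings under quotients (item 3 in \S4.1.1, proved in the excerpt), so no assumption on $\mathfrak A$ is needed there. The substance is the converse: assume $A/\mathfrak A$ is good and deduce that $A$ is good.

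\textbf{Main step.} Let $(a,b)\in A^2$ be a primitive point, say $aa'+bb'=1$. Write $\rho:A\to A/\mathfrak A$ for the quotient map; then $(\rho(a),\rho(b))$ is primitive in $(A/\mathfrak A)^2$, so by hypothesis it is a good point: there are $N\geq 1$ and $\bar\lambda\in A/\mathfrak A$ with $\rho(b)^N+\bar\lambda\,\rho(a)\in (A/\mathfrak A)^\times$. Lift $\bar\lambda$ to some $\lambda\in A$; then $b^N+\lambda a=:c$ satisfies $\rho(c)\in (A/\mathfrak A)^\times$. The key observation is that since $\mathfrak A\subset\mathfrak R$, an element $c\in A$ is a unit of $A$ as soon as its image in $A/\mathfrak A$ is a unit: indeed if $c$ lies in some maximal ideal $\mathfrak M$ of $A$, then $\mathfrak A\subset\mathfrak R\subset\mathfrak M$, so $\rho(c)$ lies in the maximal ideal $\mathfrak M/\mathfrak A$ of $A/\mathfrak A$, contradicting that $\rho(c)$ is a unit; hence $c$ lies in no maximal ideal and $c\in A^\times$. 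Therefore $b^N+\lambda a\in A^\times$, i.e. $(a,b)$ is a good point, and $A$ is a good ring.

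\textbf{Alternative phrasing via Proposition \ref{prop2.1n}.} One could equally run the argument through the characterization of good rings by torsion of $(\rho_x(A))^\times/\rho_x(A^\times)$: the point is that for $x\in A$ the image of $\mathfrak A$ in $A/xA$ lies in its Jacobson radical, and reduction modulo that image induces an isomorphism on unit groups modulo the images of $A^\times$; combined with the hypothesis on $A/\mathfrak A$ this gives property ii) of Proposition \ref{prop2.1n} for $A$. But the direct argument above is shorter and cleaner, so I would present that one.

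\textbf{Main obstacle.} There is essentially no obstacle: the only point requiring care is the lifting-of-units lemma ``$c\mapsto$ unit mod $\mathfrak R$ forces $c$ a unit,'' which is the standard fact about the Jacobson radical and is what makes the hypothesis $\mathfrak A\subset\mathfrak R$ (rather than an arbitrary ideal) indispensable — as the counterexample $A=\Q[T]$, $\mathfrak A=T\Q[T]$ in item 3 of \S4.1.1 shows, the converse genuinely fails without it. I would state and use this lemma explicitly and keep the rest of the proof to the two short paragraphs above.
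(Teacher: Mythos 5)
Your proof is correct and follows essentially the same route as the paper: the direction i)~$\Rightarrow$~ii) is delegated to stability under quotients, and for the converse both arguments lift the unit $\rho(b)^N+\bar\lambda\rho(a)$ and invoke the standard fact that an element of $A$ whose image in $A/\mathfrak A$ is a unit must itself be a unit when $\mathfrak A\subset\mathfrak R$, proved in both cases by the same maximal-ideal contradiction. The only difference is cosmetic (the paper writes $b^N+\mu a=e+\alpha$ with $\alpha\in\mathfrak A$ before running the argument, while you argue directly on $c:=b^N+\lambda a$).
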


\begin{proof}
As i) implies ii) is always satisfied (see 4.1.1 part 3),
we show that ii) implies i).

Let $\rho: A\to \frac{A}{\mathfrak A}$ be the natural epimorphism and $a,b,a',b'\in A$ with $aa'+bb'=1$.\newline  Then $\rho(a )\rho(a' )+\rho( b)\rho( b')=1$. 
As $\frac{A}{\mathfrak A}$ is a good ring, there is $N\geq 1$, $\mu\in A$ with \newline
$\rho(b)^N+\rho(\mu) \rho(a)=\epsilon \in (\frac{A}{\mathfrak A})^\times$. Let 
$e\in 
A$ with $\rho(e)=\epsilon$, then $b^N+\mu a=e+\alpha$ with $\alpha\in \mathfrak 
A$. 
If 
$e+\alpha\notin A^\times$ there is a maximal ideal $\mathfrak M\subset A$ with 
$e+\alpha\in \mathfrak M$. As $\alpha\in \mathfrak R$ then $e\in \mathfrak M$. 
As $\Ker 
\rho\subset \mathfrak R\subset \mathfrak M$, it follows that $\rho(\mathfrak 
M)$ is a 
maximal ideal of $\frac{A}{\mathfrak A}$, but $\epsilon=\rho(e)\in 
\rho(\mathfrak M)$ 
which is in contradiction with $\epsilon \in (\frac{A}{\mathfrak A})^\times$. 
Finally $A$ is a good ring.
\end{proof}

\begin{prop} \label{prop4.1}
Let $A$ be a ring, $\mathfrak R$ its Jacobson radical.  If for all $x\in 
A-\mathfrak R$, $(\frac{A}{xA})^\times$ is a torsion group, then $A$ is a good 
ring.

In particular if $A$ is a Dedekind domain such that the residue fields 
$\frac{A}{\mathfrak M}$ are finite for the maximal ideals $\mathfrak M$, then 
for all 
$x\in 
A-\{0\}$, $\frac{A}{xA}$ is finite  and so $A$ is a good ring.
\end{prop}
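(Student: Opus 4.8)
The plan is to verify Definition \ref{def2.3} directly, distinguishing according to whether the first coordinate of the primitive point lies in $\mathfrak R$. So fix a primitive point $(a,b)\in A^2$, and pick $a',b'\in A$ with $aa'+bb'=1$.

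First I would treat the case $a\in\mathfrak R$. The point is that no maximal ideal of $A$ can contain both $a$ and $b$ (it would then contain $1$); since $a$ lies in \emph{every} maximal ideal, $b$ lies in none of them, i.e.\ $b\in A^\times$. Then $b^{1}+0\cdot a=b\in A^\times$, so $(a,b)$ is a good point. One can also package this through Proposition \ref{prop2.1n}: when $aA\subset\mathfrak R$, units lift modulo $\mathfrak R$, so $\rho_a\colon A^\times\to(\rho_a(A))^\times$ is onto and the group in ii) is trivial.

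Next, for $a\in A-\mathfrak R$, I would invoke the hypothesis. Let $\rho_a\colon A\to A/aA$ be the natural epimorphism. Since $(a,b)$ is primitive, $\rho_a(b)\in(A/aA)^\times$, and by assumption $(A/aA)^\times$ is a torsion group; hence $\rho_a(b)^N=1$ for some $N\geq 1$, i.e.\ $b^N+\lambda a=1\in A^\times$ for a suitable $\lambda\in A$. So $(a,b)$ is again a good point, and $A$ is a good ring. Equivalently, the two cases together say precisely that condition ii) of Proposition \ref{prop2.1n} holds for every $a\in A$, which is arguably the cleanest way to organize the argument.

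For the ``in particular'' assertion I would show that, for a Dedekind domain $A$ with all residue fields $A/\mathfrak M$ finite and any $x\in A-\{0\}$, the quotient $A/xA$ is finite; then $(A/xA)^\times$ is finite, a fortiori torsion, and since $A-\mathfrak R\subset A-\{0\}$ the hypothesis of the first part applies. For the finiteness: factor $xA=\prod_i\mathfrak M_i^{e_i}$ into finitely many maximal ideals, use the Chinese Remainder Theorem to get $A/xA\simeq\prod_i A/\mathfrak M_i^{e_i}$, and observe that each $A/\mathfrak M_i^{e_i}$ is finite because its successive quotients $\mathfrak M_i^{\,j}/\mathfrak M_i^{\,j+1}$ are one-dimensional vector spaces over the finite field $A/\mathfrak M_i$ (localize at $\mathfrak M_i$ to reduce to a discrete valuation ring). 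There is no real obstacle in this proof; the only point requiring a little care is this last local computation, everything else being formal once Proposition \ref{prop2.1n} is in hand.
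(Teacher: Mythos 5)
Your proof is correct and follows essentially the same route as the paper: split on whether $a\in\mathfrak R$ (in which case $b$ is forced to be a unit) or $a\in A-\mathfrak R$ (in which case the torsion hypothesis on $(\frac{A}{aA})^\times$ gives $b^N+\lambda a=1$), and for the Dedekind case factor $xA$ into maximal ideals, apply the Chinese Remainder Theorem, and localize to compute $|\frac{A}{\mathfrak M_i^{e_i}}|=|\frac{A}{\mathfrak M_i}|^{e_i}$. The only cosmetic difference is your justification that $b\in A^\times$ when $a\in\mathfrak R$ (via maximal ideals rather than the paper's observation that $1-au\in A^\times$), which is an equivalent standard fact.
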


\begin{proof}
Let $a,b,u,v\in A$ with $au+bv=1$. 

If $a\in A-\mathfrak R$, then 
$(\frac{A}{aA})^\times$ is a torsion group by hypothesis and so there is $N\geq 
1$ 
and $\lambda\in A$ with $b^N+\lambda a=1$, i.e.  $(a,b)$ is a good point 
(Definition \ref{def.Good}).
 
If $a\in \mathfrak R$, then $1-av\in A^\times$ and so $b\in A^\times$ and 
$b^1+0a=b\in A^\times$; again  $(a,b)$ is a good point. 

It follows that $A$ is a good ring (Definition \ref{def.Good}).

Let us assume now that $A$ is a Dedekind domain such that the residue fields 
$\frac{A}{\mathfrak M}$ are finite for the maximal ideals $\mathfrak M$.

If $x\in A^\times$, then $\frac{A}{xA}=\{0\}$ and $(\frac{A}{xA})^\times$ is a 
torsion group.

If $x\notin A^\times$ and $x\neq 0$, then $xA=\mathfrak M_1^{\alpha_1}\mathfrak 
M_2^{\alpha_2}...\mathfrak M_r^{\alpha_r}$ where $r\geq 1$, $\mathfrak M_i$ a 
maximal 
ideal and $\alpha_i\geq 1$ for $1\leq i\leq r$.  Then $\frac{A}{xA}\simeq 
\frac{A}{\mathfrak M_1^{\alpha_1}}\times \frac{A}{\mathfrak 
M_2^{\alpha_2}}\times 
...\times \frac{A}{\mathfrak M_r^{\alpha_r}}$ and as $|\frac{A}{\mathfrak 
M_i^{\alpha_i}}|= |\frac{A}{\mathfrak M_i}|^{\alpha_i}$ (to see this one can 
replace $A$ by its localisation at $\mathfrak M_i$ which is a principal local 
ring 
and $\mathfrak M_i$ by its maximal ideal)  the ring 
$\frac{A}{xA}$ is finite and 
$(\frac{A}{xA})^\times$ is a torsion group and so $A$ is a good ring (Remark 
\ref{rem2.1} and Proposition \ref{prop2.1n}).

\end{proof}

\begin{prop} \label{propLocPol}
Let $p\in \Z$, be a prime, $F:=\{\{0\}\cup \{p^k\ ,k\geq 1\}$, $d\geq 1$ and $Z:=F^d$. 
Then \newline $S:=\{P\in \Q[X_1,X_2,...,X_d]\ |\ \forall z\in Z,\ P(z)\neq 0\}$ is a multiplicative 
set in 
$\Q[X_1,X_2,...,X_d]$ and \newline $A:=S^{-1}\Q[X_1,X_2,...,X_d]$ is a good ring. Moreover $A$
is noetherian, factorial, with zero Jacobson radical, Krull dimension $d$ and 
the residue fields $\frac{A}{\mathfrak M}$ with $\mathfrak M$ a maximal ideal, are infinite.
\end{prop}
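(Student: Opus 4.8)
The plan is to verify, in order, that: (i) $A$ is a well-defined localization with the stated ring-theoretic properties; (ii) every maximal ideal of $A$ has infinite residue field; and (iii) $A$ is a good ring, which I will reduce via Proposition \ref{prop2.1n} to showing that $(A/aA)^\times$ is a torsion group for every nonzero $a\in A$. First I would check that $S$ is multiplicatively closed (a product of polynomials nonvanishing on $Z$ is nonvanishing on $Z$, and $1\in S$) and that $0\notin S$ since $Z\neq\emptyset$, so $A$ is a nonzero domain with $\Q[X_1,\dots,X_d]\subset A\subset \Q(X_1,\dots,X_d)$. Being a localization of a Noetherian factorial ring of Krull dimension $d$, $A$ is automatically Noetherian and factorial; its Krull dimension is $d$ because $Z$ is a countable set of closed points of $\mathbb{A}^d_\Q$, so localizing at $S$ removes only height-$d$ primes lying over points of $Z$ and cannot lower the dimension (the generic point and a chain of length $d$ through a $\Q$-point not in $Z$ survive). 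For the Jacobson radical: the maximal ideals of $A$ correspond to the maximal ideals $\mathfrak{m}$ of $\Q[X_1,\dots,X_d]$ with $\mathfrak{m}\cap S=\emptyset$, i.e. those $\mathfrak{m}$ whose zero locus avoids $Z$; since $\Q[X_1,\dots,X_d]$ has zero Jacobson radical and there are plenty of such $\mathfrak{m}$, their intersection in $A$ is $(0)$.

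Next I would identify the maximal ideals and their residue fields. A maximal ideal $\mathfrak{M}$ of $A$ is $S^{-1}\mathfrak{m}$ for a maximal ideal $\mathfrak{m}\subset\Q[X_1,\dots,X_d]$ with $\mathfrak{m}\cap S=\emptyset$; then $A/\mathfrak{M}\cong \Q[X_1,\dots,X_d]/\mathfrak{m}$, which is a finite extension of $\Q$, hence infinite (of characteristic zero). So every residue field is infinite, as claimed.

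The heart of the matter is showing $A$ is good, and the plan is to show more: for every $a\in A-\{0\}$, the unit group $(A/aA)^\times$ is a torsion group, which by Remark \ref{rem2.1} (part 3) and Proposition \ref{prop2.1n} gives that $A$ is a good ring. Write $a = f/g$ with $f\in\Q[X_1,\dots,X_d]$ and $g\in S$; then $aA = fA$, so I may assume $a=f\in\Q[X_1,\dots,X_d]$, and after clearing denominators and removing unit factors I may assume $f$ is a nonconstant polynomial. Factor $f = c\,q_1^{e_1}\cdots q_r^{e_r}$ into primes $q_i$ of $\Q[X_1,\dots,X_d]$; those $q_i$ lying in $S$ become units in $A$, so $A/fA \cong \prod_i A/q_i^{e_i}A$ over the prime factors $q_i\notin S$, i.e. those $q_i$ that vanish at some point of $Z$. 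Thus it suffices to show $(A/q^eA)^\times$ is torsion for a single prime $q$ vanishing at some $z=(p^{k_1},\dots,p^{k_d})\in Z$ (with the convention $p^0=0$ allowed). The key point is that such a $q$, up to sign, has constant term and coefficients that force a divisibility by $p$: more precisely I would argue that an element of $(A/qA)^\times$ lifts to some $h\in A$ with $hh'\equiv 1 \pmod{qA}$, and then evaluate at the $p$-adic point $z$ — working in the completion or in $\Z_{(p)}$ — to see that $h(z)$ must be a $p$-adic unit times a rational whose numerator and denominator are prime to $p$; since the residue field $\Q[X_1,\dots,X_d]/q$ embeds (via $z$) into a finite extension of $\Q_p$, and the relevant local unit group has the form (finite)$\times$(pro-$p$)$\times\Z_p^{\,m}$ whose torsion-free part is killed after reduction mod a high power of $q$... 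Here is where I expect the main obstacle: controlling $(A/q^eA)^\times$ is delicate because $A/q^eA$ need not be Artinian (it has Krull dimension $d-1$), so I cannot simply count. The cleanest route I foresee is to localize further: show that every maximal ideal of $A/q^eA$ has the property that its further residue field, arising from a $\Q$-point avoiding $Z$, combined with the single $p$-adic constraint at $z\in Z$, forces any unit to satisfy a polynomial $u^n=1$; equivalently, show that the natural map from $\Q[X_1,\dots,X_d]/q$ to the product of its residue fields (finite over $\Q$) together with the $p$-adic place kills no nontrivial torsion and that inverting $S$ collapses the infinite-rank part. I would carry this out by an explicit valuation argument: for any $h\in A$ with $h \bmod q \in (A/qA)^\times$, the divisor of $h$ on the curve/variety $\{q=0\}$ is supported on points avoiding $Z$, and using that $A$ is a UFD one rewrites $h = u\cdot\prod(\text{primes in }S)$ modulo $q$, whence $h\bmod q$ is, up to a unit of $\Q$, a product of images of elements of $S$; these are units whose "valuation data" at $z$ is controlled by powers of $p$, and a pigeonhole/torsion argument on the $p$-adic valuations gives $h^n \equiv \lambda\pmod{q}$ with $\lambda\in\Q^\times$ a power of $p$ — and a further power makes it $\equiv 1$. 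Finally, assembling the finitely many prime factors $q_i$ via the product decomposition and taking a common exponent shows $(A/fA)^\times$ is torsion, completing the proof that $A$ is good.
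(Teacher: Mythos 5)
Your treatment of the routine properties (multiplicativity of $S$, Noetherian, factorial, residue fields infinite) is essentially fine, although your justification of $\dim A=d$ is off: a chain through a $\Q$-point \emph{not} in $Z$ does not survive the localization (for the point $(1,\dots,1)$ the polynomial $X_1-1$ lies both in its maximal ideal and in $S$); the surviving chain the paper uses is $(X_1)\subset\cdots\subset(X_1,\dots,X_d)$, which meets $S$ trivially because $(0,\dots,0)\in Z$. The serious problem is the core of your plan: you propose to prove that $(A/aA)^\times$ is a torsion group for all $a\neq 0$ and invoke Remark \ref{rem2.1}(3). That statement is false for this ring. Take $d=1$ and $a=X_1-p$: then $aA$ is a maximal ideal with $A/aA\simeq\Q$, so $(A/aA)^\times\simeq\Q^\times$ is not torsion. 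What is true (and what Proposition \ref{prop2.1n} actually requires) is only that $\rho_a(A)^\times/\rho_a(A^\times)$ be torsion, and the paper proves the stronger fact that this quotient is trivial. Two further steps would also fail as written: the splitting $A/fA\simeq\prod_i A/q_i^{e_i}A$ needs the $q_i^{e_i}$ to be pairwise comaximal, which fails for $d\geq 2$ (distinct irreducibles of $\Q[X_1,\dots,X_d]$ can vanish at a common point of $Z$); and your concluding step ``$h^n\equiv\lambda$ with $\lambda$ a power of $p$, and a further power makes it $\equiv 1$'' is false, since $p$ is not a root of unity in a ring containing $\Q$. (Ironically, had you aimed at the correct quotient, reaching $h^n\equiv\lambda$ with $\lambda\in\Q^\times$ would already finish, because $\Q^\times\subset A^\times$ — but your write-up neither notices this nor supplies a real argument for getting there.)

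The paper's proof runs on an entirely different and concrete mechanism that is absent from your sketch. Given a primitive pair $(a,b)$, write $a=P/T$, $b=Q/T$ with $P,Q,T\in\Z[X_1,\dots,X_d]$ and $PU+QV=R\in S$. Compactness of $Z=F^d$ in $\Z_p^d$ and continuity of $z\mapsto|R(z)|_p$ give an $M$ with $(P(z),Q(z))\not\equiv(0,0)\bmod p^M$ for all $z\in Z$. One then counts, modulo $p^N$ for $N$ large: the set of pairs $(u,v)$ with $P(z)v-Q(z)u=0$ for some $z\in Z$ has at most $N^d p^{N+M}<p^{2N}$ residues, so some $(s,t)\in\Z^2$ satisfies $P(z)t-Q(z)s\neq 0$ for every $z\in Z$, whence $at-bs\in S\subset A^\times$ and $(a,b)$ is a good point outright. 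If you want to salvage your approach, you must replace the torsion claim on $(A/aA)^\times$ by this (or an equivalent) construction of an actual unit of $A$ of the form $ta-sb$.
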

\begin{proof}
We shall write $\X$ for $X_1,X_2,...,X_d$. 

\vskip 1mm \vskip 0pt
0) Let $a\in A$ and $\rho_a:A\to\frac{A}{aA}$ be the natural 
epimorphism. Let $b\in A$ such that $(a,b)\in A^2$ is a primitive point i.e. $\rho_a(b)\in \rho_a(A)^\times$.

{\sl We prove that  $\rho_a(b)\in \rho_a(A^\times)$ i.e. $(a,b)\in A^2$ 
is a good point and so $A$ is a good ring. }
\vskip 1mm \vskip 0pt
1) One can write $a=\frac{P(\X)}{T(\X)}$, $b=\frac{Q(\X)}{T(\X)}$ 
with $P,Q,T\in \Z[\X ]$ and $T\in S$. There is $U,V\in \Z[\X]$ and 
$R\in S$ with
\begin{eqnarray}
 P(\X)U(\X)+Q(\X)V(\X)=R(\X). \label{1LocPol}
\end{eqnarray}

In particular $\forall z\in Z$, one has $R(z)\neq 0$. 
\vskip 1mm \vskip 0pt
2) {\sl There is  $M\geq 1$ such that for all $z\in Z$, one has 
$(P(z),Q(z))\not\equiv 0$ modulo $p^M\Z^2$.}

Let $\Z_p$ be the completion of $\Z$ for the $p$-adique absolute $|.|_p$. Then $\Z_p$ is
compact and  if $F\subset \Z_p$ is  closed, it is compact. Moreover  $Z=F^d$ is compact.
The map $z\in Z\to |f(z)|_p\in \R$ is continuous and as $R(z)\neq 0$ for $z\in Z$, there is $M\geq 1$ with
$R(z)\not\equiv 0 \mod p^M\Z.$

From relation \eqref{1LocPol} we deduce that if $P(z)\equiv 0 \mod p^M\Z$,
(resp. $Q(z)\equiv 0 \mod p^M\Z$ ) then \newline
$Q(z)\not\equiv 0 \mod p^M\Z$, (resp. $P(z)\not\equiv 0 \mod p^M\Z$). 
\vskip 1mm \vskip 0pt
3) {\sl We show that there is $(s,t)\in \Z^2$ such that $\forall z\in Z$, one has  $P(z)t-Q(z)s\neq 0$.}
\vskip 1mm \vskip 0pt
3.1) Let $z\in Z$ and $C(P(z),Q(z)):=\{(u,v)\in \Z^2\ |\ P(z)v-Q(z)u=0\}$. 

Let $N\geq 1$ and $\pi_N:\Z\to \frac{\Z}{p^N\Z}$ be the natural epimorphism and \newline
$\Theta(z):= \{(x,y)\in (\frac{\Z}{p^N\Z})^2\ |\ \pi_N(P(z))y-\pi_N(Q(z))x=0\}$, then 
\begin{eqnarray}
 \pi_N\times \pi_N(C(P(z),Q(z))\subset \Theta(z). \label{2LocPol}
\end{eqnarray}
We remark that if $z'\equiv z$ modulo $p^N\Z^d$, then 
\begin{eqnarray}
 \pi_N\times \pi_N(C(P(z'),Q(z'))\subset \Theta(z). \label{3LocPol}
\end{eqnarray}
Let $Z_N:=\{0\}\cup\{p,p^2,...,p^{N-1}\}$, it follows from \eqref{2LocPol} and \eqref{3LocPol} that 
\begin{eqnarray}\pi_N\times \pi_N(\cup_{z\in Z}C(P(z),Q(z))\subset
 \cup_{z\in Z_N}\{(x,y)\in (\frac{\Z}{p^N\Z})^2\ |\ 
\pi_N(P(z))y-\pi_N(Q(z))x=0\} . \label{4LocPol}
\end{eqnarray}

3.2) We assume that $N\geq M$ where $M$ is defined in 2). Then $\forall z\in Z$, we have \newline
$\card \{(x,y)\in (\frac{\Z}{p^N\Z})^2\ |\ \pi_N(P(z))y-\pi_N(Q(z))x=0\}\leq p^{N+M}$, and so there 
is $(a,b)\in \Z^2$, such that $\forall z\in Z$, we have $P(z)b-Q(z)a\neq 0$. 

We assume that $\pi_N(P(z))y-\pi_N(Q(z))x=0$. We can write  $\pi_N(P(z))=\pi_N(p^\alpha)u$ and \newline
$\pi_N(Q(z))=\pi_N(p^\beta)v$ with $u,v\in \frac{\Z}{p^N\Z}^\times$. Moreover we can assume that 
$0\leq \alpha\leq \beta\leq N$.

Now we remark that 2) implies
\begin{eqnarray}
 \alpha\leq M. \label{5LocPol}
\end{eqnarray}
So we can write 
\begin{eqnarray}
 \pi_N(p^\alpha)u(y-\pi_N(p^{\beta-\alpha})u^{-1}vx)=0. \label{6LocPol}
\end{eqnarray}
It follows that for $x\in \frac{\Z}{p^N\Z}$ there are $p^\alpha$ solutions $y$ to equation \eqref{6LocPol}.
This shows
\begin{eqnarray}
\card \{(x,y)\in (\frac{\Z}{p^N\Z})^2\ |\ \pi_N(P(z))y-\pi_N(Q(z))x=0\}\leq p^{N+M}. \label{7LocPol}
\end{eqnarray}
Now it follows from \eqref{4LocPol}, \eqref{5LocPol}, \eqref{6LocPol}, \eqref{7LocPol} that
\begin{eqnarray}
\card \{\pi_N\times \pi_N(\cup_{z\in Z}C(P(z),Q(z))\}\leq N^dp^{N+M}. 
\end{eqnarray}
Now for $N$ big enough, we have $N^dp^{N+M}<p^{2N}$ and so there is $(s,t)\in \Z^2$ such that
\begin{eqnarray}
\pi_N\times \pi_N(s,t)\notin\pi_N\times \pi_N(\cup_{z\in Z}C(P(z),Q(z)). 
\end{eqnarray}
and so 
\begin{eqnarray}
(s,t)\notin\cup_{z\in Z}C(P(z),Q(z)). 
\end{eqnarray}
i.e. $\forall z\in Z$, we have $P(z)t-Q(z)s\neq 0.$ 
\vskip 1mm \vskip 0pt
4) {\sl We prove that  $\rho_a(b)\in \rho_a(A^\times)$ i.e. $(a,b)\in A^2$ 
is a good point and so $A$ is a good ring. }

We have $a=\frac{P(\X)}{T(\X)}$, $b=\frac{Q(\X)}{T(\X)}$ 
with $P,Q,T\in \Z[\X ]$ and $T\in S$.

By 3), $\forall z\in Z$, one has $P(z)t-Q(z)s\neq 0$ and so $a(z)t-a(z)s\neq 0$ i.e. 
$a(\X)t-b(\X)s\in S\subset A^\times$.

If $s\neq 0$ then $b-\frac{t}{s}a\in A^\times$ and so $\rho_a(b)\in \rho_a(A^\times)$.

If $s=0$, then $a\in S$ and $\rho_a(A)=\{0\}$ and again $\rho_a(b)\in \rho_a(A^\times)$.
\vskip 1mm \vskip 0pt
5) As $\Q[\X]$ is integral, noetherian and factorial , this is the same for $A=S^{-1}\Q[\X]$. 
\vskip 1mm \vskip 0pt
6) {\sl We show that $\dim A=d$. }

As $\dim \Q[\X]=d$, it follows that $\dim A\leq d$. 

We show that $\{0\}\subset X_1A\subset  X_1A+ X_2A\subset ...\subset  X_1A+ X_2A+...+X_dA$ is a chain 
of prime ideals of length $d$. 

Let $k\geq 1$ and $r_k:\Q[\X]\to \Q[X_{k+1},X_{k+2},...,X_d]$ the homomorphism with \newline
$r_k(P(\X)):=P(0,...,0,X_{k+1},X_{k+2},...,X_d)$.
If $T\in S$, $r_k(T)=r_k(0,...,0,X_{k+1},X_{k+2},...,X_d)$. \newline If
$z_{k+1},z_{k+2},...,z_d\in F$, then $(0,...,0,z_{k+1},z_{k+2},...,z_d)\in Z$, so  
$r_k(0,...,0,z_{k+1},z_{k+2},...,z_d)\neq 0$, and  $r_k(T)\neq 0$.

It follows that $r_k$ extends to an homomorphism 

$r'_k:S^{-1}\Q[\X]\to \Q(X_{k+1},X_{k+2},...,X_d)$, so $\Ker r'_k=X_1A+ X_2A+...+X_kA$ is a prime ideal. 

As $r_k(X_{k+1})=X_{k+1}\neq 0$, we get that $\Ker r'_k\neq \Ker r'_{k+1}$. 
\vskip 1mm \vskip 0pt
7) {\sl We show that the Jacobson radical of  the ring $A$ is zero.}

Let $z\in Z$ and $f_z:\Q[\X]\to \Q$, defined by $f_z(P):=P(z)$. Then $f_z$ extends to an homomorphism 
$f'_z:S^{-1}\Q[\X]\to \Q$ and ${\mathfrak M}_z:=\Ker f'_z\subset A$ is a maximal ideal. Let $\frac{P(\X)}{T(\X)}
\in A
$ 
with $P(\X)\in \Q[\X]$ and $T(\X)\in S$, then $\frac{P(\X)}{T(\X)}\in {\mathfrak M}_z$ iff $P(z)=0$.
As $F$ is infinite it is well known that this implies that $P=0$. 
\vskip 1mm \vskip 0pt
8) Let ${\mathfrak M}\subset A$, be a maximal ideal; then $\Q\subset \frac{A}{{\mathfrak M}}$ and so is infinite.

\end{proof}

\begin{prop}\label{prop4.3}
Let $A$ be a field or a semi-local ring (i.e. the set 
of 
maximal ideals in $A$ is finite), then $A$ is a good ring.
\end{prop}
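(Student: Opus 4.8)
The plan is to prove the stronger statement that in a semi-local ring \emph{every} primitive point $(a,b)\in A^2$ is already a good point with $N=1$, i.e. there is $\lambda\in A$ with $b+\lambda a\in A^\times$ (this is the classical fact that semi-local rings have stable range one). This contains the field case, since a field is semi-local with unique maximal ideal $\{0\}$, and the zero-ring case, which has no maximal ideal and satisfies $A^\times=A$, hence is trivially good; so it suffices to treat a semi-local ring $A$ with maximal ideals $\mathfrak M_1,\dots,\mathfrak M_r$.

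Fix a primitive point $(a,b)$, say $ua+vb=1$. First I would split the index set as $I:=\{i\mid a\in\mathfrak M_i\}$ and its complement $J$. For $i\in I$, primitivity forces $b\notin\mathfrak M_i$ (otherwise $ua+vb\in\mathfrak M_i$), and since $b+\lambda a\equiv b\pmod{\mathfrak M_i}$ for every $\lambda$, nothing has to be arranged at these primes. For $i\in J$ the image $\bar a$ is invertible in the field $A/\mathfrak M_i$, and since every field has at least two elements I can pick $c_i\in A/\mathfrak M_i$ with $\bar b+c_i\bar a\neq 0$, namely any $c_i\neq-\bar b\,\bar a^{-1}$.

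The key step is then to assemble a single $\lambda$: distinct maximal ideals are pairwise comaximal, so the Chinese Remainder Theorem furnishes $\lambda\in A$ with $\lambda\equiv c_i\pmod{\mathfrak M_i}$ for all $i\in J$ (and arbitrary residue at $\mathfrak M_i$ for $i\in I$). By construction $b+\lambda a\notin\mathfrak M_i$ for every $i$, hence $b+\lambda a$ lies in no maximal ideal of $A$ and is therefore a unit. Thus $(a,b)$ is a good point and, by Definition~\ref{def2.3}, $A$ is a good ring.

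There is essentially no obstacle here; the only points worth a word of care are that every residue field $A/\mathfrak M_i$ has at least two elements (so a value can always be avoided, even when $A/\mathfrak M_i=\F_2$), that the maximal ideals are pairwise comaximal so CRT applies, and that in a commutative unital ring an element outside every maximal ideal is invertible. Alternatively one could deduce the proposition from Proposition~\ref{prop2.1n} by checking that $(\rho_a(A))^\times/\rho_a(A^\times)$ is in fact trivial for semi-local $A$, or simply invoke that semi-local rings have stable range one (\cite{B}); the direct argument above is the more self-contained choice.
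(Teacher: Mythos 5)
Your proof is correct, and it proves something slightly stronger than the paper does: you show directly that every primitive point of a semi-local ring is good with $N=1$ (i.e.\ semi-local rings have stable range one), by splitting the maximal ideals according to whether they contain $a$ and using the Chinese Remainder Theorem to manufacture $\lambda$. Each step checks out: at a maximal ideal containing $a$, primitivity forces $b$ to be a unit modulo that ideal and $\lambda$ is irrelevant there; at the others, the residue field has at least two elements so a forbidden value can be avoided; pairwise comaximality makes CRT applicable; and an element outside every maximal ideal is a unit. The paper instead argues structurally: it passes to $A/\mathfrak R\simeq\prod_i A/\mathfrak M_i$ (again CRT), notes each factor is a field hence good, invokes stability of goodness under finite products (Section 4.1.1, part 2), and then lifts back along $\mathfrak R$ via Proposition~\ref{prop4.2}. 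So the underlying mechanism is the same (CRT over the finitely many maximal ideals), but the packaging differs: the paper's route reuses general-purpose lemmas that serve elsewhere in Section 4 and only yields goodness (with no control on $N$), whereas your direct argument is self-contained and delivers the sharper classical conclusion $N=1$. Your alternative remark -- that $(\rho_a(A))^\times/\rho_a(A^\times)$ is trivial for semi-local $A$, feeding into Proposition~\ref{prop2.1n} -- is also a valid third route.
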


\begin{proof}
If $A$ is a field this is immediate from the definition. Now let us assume that 
the set of maximal ideals in $A$ is $\{\mathfrak M_1,\mathfrak 
M_2,...,\mathfrak M_r\}$ 
then 
the diagonal morphism $d:A\to \prod_{1\leq i\leq r}\frac{A}{\mathfrak M_i}$ 
induces 
an isomorphism of rings $\frac{A}{\mathfrak R}\simeq \prod_{1\leq i\leq 
r}\frac{A}{\mathfrak M_i}$. As $\frac{A}{\mathfrak M_i}$ is a good ring we 
deduce from 
example 
2.  in 4.1.
that 
its the same for $\frac{A}{\mathfrak R}$ and so for $A$ (Proposition 
\ref{prop4.1}).
\end{proof}

\begin{prop}\label{prop4.4}
Let $A$ be a principal ideal ring (PIR). We assume that for all $x\in A$ which 
is not 
a zero divisor, the quotient ring $\frac{A}{xA}$ is finite, then $A$ is a good 
ring.

\end{prop}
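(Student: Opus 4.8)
The plan is to reduce the statement to two situations that are already under control, namely principal ideal \emph{domains} and \emph{local} rings. The main external input is the classical structure theorem for principal ideal rings (Zariski--Samuel; Hungerford): any commutative principal ideal ring $A$ is isomorphic to a finite product $A\simeq A_1\times A_2\times\cdots\times A_r$ in which each factor $A_i$ is either a principal ideal domain or a special principal ideal ring, i.e.\ a local principal ideal ring (necessarily Artinian, with nilpotent maximal ideal). Since the class of good rings is stable under finite products (see 4.1.1, part 2), it then suffices to prove that each factor $A_i$ is a good ring.

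For a factor $A_i$ that is a special principal ideal ring, I would simply observe that it is local, hence semi-local, and therefore a good ring by proposition \ref{prop4.3}; note that this case does not even use the finiteness hypothesis.

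For a factor $A_i$ that is a principal ideal domain, I first transport the finiteness hypothesis from $A$ down to $A_i$. Given $y\in A_i-\{0\}$, the element $x:=(1,\dots,1,y,1,\dots,1)\in A$ with $y$ in the $i$-th coordinate is not a zero divisor in $A$, because $A_i$ is a domain; hence $A/xA$ is finite by hypothesis, and $A/xA\simeq A_i/yA_i$. Thus $A_i/yA_i$ is finite for every nonzero $y$, so $A_i$ is a good ring by remark \ref{rem2.1}, part 4 (equivalently by proposition \ref{prop2.1n}). Combining the two cases, $A=\prod_i A_i$ is a good ring.

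The only step that is not purely formal is the appeal to the structure theorem; everything afterwards is bookkeeping with results already established. The reason one cannot argue more directly is that $A$ need not be a domain: for a primitive point $(a,b)$ with $a$ a zero divisor the ring $A/aA$ can be infinite (take $a$ supported on one principal ideal domain factor and equal to $1$ on the others), so remark \ref{rem2.1}, part 4 does not apply to $A$ itself. It is precisely the product decomposition that confines the zero divisors to the idempotent structure and separates the "domain part'' from the "local part'', each of which the earlier results handle cleanly.
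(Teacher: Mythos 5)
Your proof is correct and follows essentially the same route as the paper: both invoke the Zariski--Samuel structure theorem to write $A$ as a finite product of PIDs and local (special) principal ideal rings, handle the local factors via proposition \ref{prop4.3}, transport the finiteness hypothesis to the PID factors via a non-zero-divisor concentrated in one coordinate, and conclude by stability under finite products. Your treatment of the PID case is marginally more direct (you test an arbitrary nonzero $y$ rather than first passing through generators of maximal ideals as the paper does), but the argument is the same.
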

\begin{proof} 
By (\cite{ZS} Theorem 33 page 245), we know that a principal ideal ring $A$ is 
a 
finite product of rings $A=A_1\times A_2\times ...\times A_r$ where $A_i$ is a 
PID or a local ring whose the maximal ideal $\mathfrak M_i$ is generated by
  a nilpotent element.
  
If $A_i$ is a PID and $\mathfrak M_i=z_iA_i$ a maximal ideal, then 
$(1,..,1,z_i, 
1,...,1)\in A$ is not a zero divisor. As $\frac{A_i}{z_iA_i}\simeq 
\frac{A}{(1,..,1,z_i,1,...,1)A}$ is finite, then if $x_i\in A_i-\{0\}$, 
$\frac{A_i}{x_iA_i}$ is a finite ring and so $\frac{A_i}{x_iA_i}$ is finite for 
any $x_i\in A_i-\{0\}$. Now with Remark \ref{rem2.1} and Proposition 
\ref{prop2.1n}, $A_i$ is a good ring. 

If $A_i$ is a local ring, then $A_i$ is a good ring (cf. example 5 in 4.1 ). 
Finally $A$ is a finite product of good rings and so $A$ is a good ring (cf. 
example 2 in 4.1 ).

\end{proof}

\begin{prop}\label{prop4.5}
\begin{enumerate}[A.] Let $A$ be a ring.

 \item Let $\rho: \Z\to A$ be the natural homomorphism. We assume that any 
element in $A$ is integral over  $\rho (\Z)$, then $A$ is a good ring.
 \item Let $L$ be a subfield of an algebraic closure of a finite 
field, $L[T]$ the polynomial ring in the variable $T$ over $L$. We assume there 
is   $\rho: L[T]\to A$ an homomorphism with $A$ integral over 
$\rho(L[T])$, i.e. every element in $A$ is zero of a unitary polynomial with 
coefficient in $L[T]$, then $A$ is a good ring.
 
\end{enumerate}

\end{prop}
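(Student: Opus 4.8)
The plan is to reduce both statements A and B to a single arithmetic input — that a ring finite over $\Z$, resp.\ finite over $L[T]$ with $L\subseteq\overline{\F_p}$, is pictorsion — and then to combine this with two facts already at our disposal: pictorsion rings are good rings (Remark~\ref{rem3.1}, via Theorem~\ref{thm3.1}), and an inductive limit of good rings is a good ring (Section~4.1.1).

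First I would set $R:=\rho(\Z)\subseteq A$ in case A and $R:=\rho(L[T])\subseteq A$ in case B, so that $R$ is a quotient of $\Z$ (resp.\ of $L[T]$), and write $A=\varinjlim_i A_i$ where $A_i$ runs over the finitely generated $R$-subalgebras of $A$; this is a filtered union. Since by hypothesis every element of $A$ is integral over $R$, each $A_i$ is a finitely generated $R$-algebra that is integral over $R$, hence a finite $R$-module, hence — pulling back along $\Z\twoheadrightarrow R$ (resp.\ $L[T]\twoheadrightarrow R$) — a finite module over $\Z$ (resp.\ over $L[T]$). By stability of good rings under inductive limits it therefore suffices to prove that each $A_i$ is a good ring.

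The heart of the matter is the claim: any ring $B$ finite over $\Z$ has finite Picard group, and any ring $B$ finite over $L[T]$ with $L$ a subfield of $\overline{\F_p}$ has torsion Picard group. Granting this, each $A_i$ is pictorsion — any ring finite over $A_i$ is again finite over $\Z$ (resp.\ over $L[T]$), hence has torsion Picard group — so by Remark~\ref{rem3.1} (i.e.\ Theorem~\ref{thm3.1}) $A_i$ is a good ring, and with the previous paragraph this gives that $A$ is a good ring in both cases. One can also bypass the word ``pictorsion'': for a given primitive $(a,b)\in A_i^2$, Proposition~\ref{prop3.1} identifies $\mathcal{O}_{S(a,b)}(S(a,b))$ with $A_i[T]/(T(a-T)A_i[T])$, which is free of rank two over $A_i$ and hence finite over $\Z$ (resp.\ over $L[T]$); its Picard group is then torsion, so $(a,b)$ is a good point by Theorem~\ref{thm3.1}, {\sl i)}$\Rightarrow${\sl iii)}.

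The main obstacle is the Picard-group claim, which is where the arithmetic enters. Over $\Z$ this is the classical finiteness of $\Pic$ of an order: reduce to $B$ reduced (nilpotents do not affect $\Pic$ of an affine scheme), pass to the finite normalization $\widetilde{B}$ (Noether), a product of rings of integers of number fields and of finite fields, and run the Mayer--Vietoris sequence of the conductor square, using finiteness of the class groups $\mathrm{Cl}(\mathcal{O}_K)$ and finiteness of $(\widetilde{B}/\mathfrak{c})^\times$. Over $L[T]$ one argues in the same spirit, replacing ``finite'' by ``torsion'': $\widetilde{B}$ is a product of smooth affine curves over finite extensions of $L$, whose Picard groups are torsion because $\mathrm{Pic}^0$ is governed by the $\overline{\F_p}$-points of an abelian variety and every such point is defined over a finite field, while the Artinian contributions $\widetilde{B}/\mathfrak{c}$ and the unit groups that occur are torsion since $\overline{\F_p}^\times$ and the additive group in characteristic $p$ are torsion. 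I would also check the two routine points in the reduction: that the $A_i$ are genuinely \emph{module}-finite (this is exactly where integrality of all of $A$, not merely of a single pair, is used) and that $A$ really is the filtered union of the $A_i$.
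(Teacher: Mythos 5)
Your argument is correct, but it follows a genuinely different route from the paper's. The paper makes the same first reduction (write $A$ as the filtered union of its subrings that are module-finite over $\rho(\Z)$, resp.\ $\rho(L[T])$, and use stability of good rings under inductive limits), but from there it never touches Picard groups: it verifies directly criterion {\sl ii)} of Proposition \ref{prop2.1n}, proving for each $z\in A$ that $\rho_z(A)^\times/\rho_z(A^\times)$ is \emph{finite}. This is done by reducing to $A$ reduced (Proposition \ref{prop4.2}), splitting along the minimal primes that do or do not contain $z$, embedding $A$ into a product $B\times C$ of the corresponding quotients, and invoking two ad hoc finiteness lemmas (Lemmas \ref{lem4.1} and \ref{lem4.2}) about rings finite over $\Z$ or $k[T]$ with $k$ finite. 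You instead show that each module-finite subring $A_i$ is pictorsion --- via $\Pic(B)\simeq\Pic(B_{red})$, finiteness of the normalization, the Units--Pic sequence of the conductor square, finiteness of class groups of rings of integers, and torsion of $\Pic$ of affine curves over subfields of $\overline{\F_p}$ --- and then quote Theorem \ref{thm3.1}/Remark \ref{rem3.1}. All of these inputs are standard and correctly deployed, and the direct variant you give (applying Theorem \ref{thm3.1} to ${\cal O}_{S(a,b)}(S(a,b))=A_i[T]/(T(T-a))$, which is finite over $\Z$, resp.\ $L[T]$) closes the argument cleanly. What your route buys is a strictly stronger conclusion --- the $A_i$ are pictorsion, and by the paper's own Section 4.3 (Goldman's Dedekind domain) pictorsion is strictly stronger than good --- and brevity once the classical Picard-group facts are granted; in substance it combines Remark \ref{rem3.1} with the observation, essentially contained in \cite{GLL}, that rings integral over $\Z$ are pictorsion. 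What the paper's route buys is an elementary, self-contained proof avoiding normalization, class-group finiteness and Jacobians, together with the sharper quantitative fact that the groups $\rho_z(A)^\times/\rho_z(A^\times)$ are finite rather than merely torsion. If you write your version up in full, the two points to make explicit are the module-finiteness of the normalization (Noether; $\Z$ and $L[T]$ are excellent) and the exactness of the Units--Pic sequence for the conductor (Milnor) square.
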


\begin{proof}

As $\Z$ (resp. $L[T]$) is a pictorsion ring (\cite{MB}, Theorem 2.3. p. 165), it follows by Corollary \ref{coro3.1} that $\Z$ (resp. $L[T]$) is a good ring.

Note that there is a direct algebraic proof of this result, due to its technical nature we refer to \newline \cite{FM}, Proposition 4.6.
\end{proof}

\subsection{Examples of rings $A$ which aren't good rings.}

\subsubsection{Good rings and localization}

In general a localisation of a good ring isn't a good ring as shown with the following example. 

\begin{prop} \label{Loc}
 Let $k$ be a field and $k[X,Y,Z]$, the polynomial ring. 
Let ${\mathfrak M}=(X,Y,Z)\subset k[X,Y,Z]$ be a maximal ideal and 
$A: =k[X,Y,Z]_{\mathfrak M}$, then $A$ is a good ring. 

Let $B:=A[\frac{1}{X+YZ}]$ be the localized ring $S^{-1}A$, with  $S:=\{(X+YZ)^n\ |\ n\in \N\}$,  then 
$B$ isn't a good ring.

\end{prop}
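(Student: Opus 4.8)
The plan is to invoke Proposition \ref{prop2.1n}: to see that $B$ is \emph{not} a good ring it suffices to exhibit one element $a\in B$ for which the quotient group $(B/aB)^{\times}/\rho_{a}(B^{\times})$ fails to be torsion. I would take $a=X$. (That $A$ itself is a good ring is immediate, since $A$ is a local ring; see Remark \ref{rem2.1}(5) or Proposition \ref{prop4.3}.) The geometric reason for choosing $a=X$ is that on the hyperplane $\{X=0\}$ the inverted polynomial $f:=X+YZ$ degenerates to the product $YZ$ of two coordinate functions; so reducing modulo $X$ forces a single multiplicative relation between the classes of $Y$ and $Z$, leaving behind a free cyclic quotient in the relevant unit group.

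First I would pin down $B^{\times}$. Since $A=k[X,Y,Z]_{\mathfrak M}$ is a localization of a polynomial ring it is a UFD, and $f=X+YZ$ is a prime element of $A$: it is irreducible in $k[X,Y,Z]$, being monic of degree one in $X$ over $k[Y,Z]$, and $f\in\mathfrak M$, so it stays prime after localizing at $\mathfrak M$. The usual unique-factorization argument then gives $B^{\times}=A^{\times}\cdot f^{\Z}$: writing $\beta=a/f^{n}$ and $\beta^{-1}=a'/f^{m}$ with $a,a'\in A$, the identity $aa'=f^{n+m}$ in the domain $A$ forces $a$ to be a unit times a power of $f$.

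Next I would identify $B/XB$ explicitly. Since localization commutes with quotients, $B/XB\simeq (A/XA)[1/\bar f]$, where $A/XA\simeq k[Y,Z]_{(Y,Z)}=:R_{0}$ and $\bar f=\overline{X+YZ}=YZ$ in $R_{0}$; hence $B/XB\simeq R_{0}[1/(YZ)]=:R$. Now $R_{0}$ is again a (regular local) UFD having $Y$ and $Z$ among its prime elements, so inverting $YZ$ removes precisely the primes $(Y)$ and $(Z)$ and no others, whence $R^{\times}=R_{0}^{\times}\cdot Y^{\Z}\cdot Z^{\Z}\simeq R_{0}^{\times}\times\Z^{2}$, the $\Z^{2}$ being freely generated by the classes of $Y$ and $Z$ (detected by the valuations $\mathrm{ord}_{Y},\mathrm{ord}_{Z}$).

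Finally I would compute $\rho_{X}(B^{\times})$ inside $R^{\times}$. The reduction $A^{\times}\to R_{0}^{\times}$ is onto, because an element of $R_{0}$ not in $(Y,Z)$ lifts to an element of $A$ not in $\mathfrak M$, i.e.\ to a unit of $A$; so $\rho_{X}(A^{\times})=R_{0}^{\times}$. Moreover $\rho_{X}(f)=YZ$, which under $R^{\times}\simeq R_{0}^{\times}\times\Z^{2}$ corresponds to $(1,1)$ in the $\Z^{2}$. Therefore $\rho_{X}(B^{\times})=R_{0}^{\times}\cdot(YZ)^{\Z}$ corresponds to $R_{0}^{\times}\times\Z(1,1)$, and
\[
(B/XB)^{\times}/\rho_{X}(B^{\times})\ \simeq\ \bigl(R_{0}^{\times}\times\Z^{2}\bigr)\big/\bigl(R_{0}^{\times}\times\Z(1,1)\bigr)\ \simeq\ \Z^{2}/\Z(1,1)\ \simeq\ \Z,
\]
which is not a torsion group; by Proposition \ref{prop2.1n}, $B$ is not a good ring. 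The only delicate point in all of this, and the step I expect to require the most care, is the bookkeeping of unit groups for the two UFD localizations ($B$ over $A$, and $R$ over $R_{0}$): one must verify that inverting $f$, respectively $YZ$, kills exactly the expected prime(s) and leaves the rest of the unit group untouched. Everything else is routine.
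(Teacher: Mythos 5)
Your proof is correct and follows essentially the same route as the paper: both take $a=X$, identify $B^\times$ as $A^\times\cdot(X+YZ)^{\Z}$ via factoriality, compute $B/XB\simeq k[Y,Z]_{(Y,Z)}[\tfrac{1}{YZ}]$, and obtain the non-torsion quotient $\Z^2/\Z(1,1)\simeq\Z$ (the paper writes it as $\frac{Y^{\Z}\times Z^{\Z}}{(YZ)^{\Z}}$). Your write-up is in fact slightly more careful on the unit-group bookkeeping for the two UFD localizations, which the paper asserts with less detail.
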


\begin{proof}
As $A$ is  a local ring it is  a good ring (Proposition \ref{prop4.3}).

Note that 
$A:=\{\frac{P}{R}\ |\ P\in k[X,Y,Z],\ R\in k[X,Y,Z],\ R(0,0,0)\neq 0\}$ and 
\newline 
$B=\{\frac{P}{(X+YZ)^nR}\ |\ P,R\in k[X,Y,Z], \ n\geq 0,\ R(0,0,0)\neq 0\}$. 

As $X+YZ$ is irreducible in the factorial ring $k[X,Y,Z]$, we have 
\begin{eqnarray}
 B^\times=(X+YZ)^\Z\times\{\frac{P}{Q}\ |\ P,Q\in k[X,Y,Z],\ P(0,0,0)\neq 0,Q(0,0,0)\neq 0\}. \label{1Loc}
\end{eqnarray}
Let $a:=X\in B$, then  $\frac{B}{aB}\simeq\{\frac{P}{(YZ)^nR}\ |\ P,R\in k[Y,Z], \ n\geq 0,\ R(0,0)\neq 0.\}$
It follows that 
\begin{eqnarray}
 (\frac{B}{aB})^\times = Y^\Z\times Z^\Z\times\{\frac{P}{R}\ |\ P,R\in k[Y,Z],\ R(0,0)\neq 0\} . \label{2Loc}
\end{eqnarray}
Let $\rho_a:B\to \frac{B}{aB}$, the natural epimorphism. It follows from \eqref{1Loc} that 
\begin{eqnarray}
 \rho_a(B^\times)=Y^\Z\times Z^\Z\times\{\frac{P}{R}\ |\ P,R\in k[Y,Z],\ R(0,0)\neq 0\} . \label{3Loc}
\end{eqnarray}
Now with \eqref{2Loc} and \eqref{3Loc} we have 
$\frac{\rho_a(B)^\times}{\rho_a(B^\times)}\simeq \frac{Y^\Z\times Z^\Z}{(YZ)^\Z}\simeq \Z.$

Then $\frac{\rho_a(B)^\times}{\rho_a(B^\times)}$ is not torsion and so $B$ isn't a good ring.
\end{proof}

\subsubsection{Good rings and transfert to polynomial rings}

A. {\sl  The case of polynomial rings $k[T]$ with coefficients in a 
field is answered in the following proposition.}

\begin{prop}\label{lem4.3}
Let $k$ be a field, $A:=k[T]$ the polynomial ring of the variable 
$T$ 
and coefficients in $k$. 

Then the following properties are equivalent.

\begin{enumerate}[i)]
 \item The ring $A$ is a good ring,
 \item the field $k$ is algebraic over a finite field,
 \item the group $k^\times$ is a torsion group.
\end{enumerate}
\end{prop}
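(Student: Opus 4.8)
The plan is to establish the equivalence of ii) and iii) directly, and then to close the loop with the two implications iii) $\Rightarrow$ i) and i) $\Rightarrow$ iii); essentially all the content sits in the last one. The equivalence of ii) and iii) is elementary. If $k$ is algebraic over a finite field $\F_p$, then every $x\in k^\times$ lies in a finite subfield and hence has finite multiplicative order, so $k^\times$ is a torsion group. Conversely, if $k^\times$ is torsion then $\mathrm{char}\,k\neq 0$ — otherwise $\Q\subseteq k$ and the image of $2$ in $k^\times$ has infinite order — so $\F_p\subseteq k$, and each nonzero $x\in k$ satisfies $x^N=1$ for some $N\geq 1$ and is therefore algebraic over $\F_p$; since $0$ is algebraic too, $k$ is algebraic over $\F_p$.

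For iii) $\Rightarrow$ i): having shown $k$ is algebraic over a finite field, $k$ is isomorphic to a subfield $L$ of an algebraic closure of a finite field, and $A=k[T]$ equals $\rho(L[T])$ for the identity homomorphism $\rho\colon L[T]\to A$, so $A$ is (trivially) integral over $\rho(L[T])$; Proposition \ref{prop4.5} B. then gives that $A$ is a good ring. One could instead argue directly: for any nonzero $a\in A$ the $k$-algebra $A/aA$ is a finite product of Artinian local rings, each with residue field algebraic over $\F_p$ and maximal ideal $\mathfrak m$ such that $1+\mathfrak m$ is a $p$-group, whence $(A/aA)^\times$ is torsion, and Remark \ref{rem2.1} together with Proposition \ref{prop2.1n} concludes; citing Proposition \ref{prop4.5} is shorter.

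The substantive direction is i) $\Rightarrow$ iii), which I would prove by contraposition. Assume $k^\times$ is not torsion and fix $c\in k^\times$ of infinite multiplicative order. Take $a:=T(T-1)\in A$; since $(T)$ and $(T-1)$ are comaximal, evaluation at $0$ and $1$ gives a ring isomorphism $A/aA\simeq k\times k$, so $(\rho_a(A))^\times=(A/aA)^\times\simeq k^\times\times k^\times$. As $A^\times=k^\times$ and a constant $e$ maps to $(e,e)$, the subgroup $\rho_a(A^\times)$ is the diagonal $\Delta(k^\times)$, whence
\[
\frac{(\rho_a(A))^\times}{\rho_a(A^\times)}\;\simeq\;\frac{k^\times\times k^\times}{\Delta(k^\times)}\;\simeq\;k^\times
\]
(via $(x,y)\mapsto xy^{-1}$), which is not a torsion group. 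By Proposition \ref{prop2.1n}, $A$ is not a good ring. Concretely the witness is the primitive point $(a,b)$ with $b:=(c-1)T+1$ (so $b(0)=1$, $b(1)=c$, hence $b$ is a unit modulo $a$): if $b^N+\lambda a\in A^\times=k^\times$ for some $N\geq 1$ and $\lambda\in A$, then reducing modulo $a$ gives $(1,c^N)=(e,e)$ for some $e\in k^\times$, so $c^N=1$, contradicting that $c$ has infinite order; thus $(a,b)$ is not a good point.

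I do not expect a real obstacle: the only idea is to choose $a$ with two distinct roots, so that the image of $A^\times=k^\times$ in $(A/aA)^\times=k^\times\times k^\times$ is the diagonal and the relevant quotient is exactly $k^\times$. The routine verifications are that $A^\times=k^\times$ (true since $k[T]$ is a domain), that the Chinese Remainder Theorem applies to the comaximal ideals $(T)$ and $(T-1)$, and — only if one takes the direct route for iii) $\Rightarrow$ i) — the description of the unit group of a finite product of Artinian local $k$-algebras.
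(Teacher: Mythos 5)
Your proof is correct. The equivalence of ii) and iii) is handled exactly as in the paper. For the substantive direction i) $\Rightarrow$ iii) you use the same key construction as the paper --- the modulus $a=T(T-1)$ and evaluation at its two roots $0$ and $1$ --- but you run it in contrapositive form: you fix a non-torsion $c\in k^\times$, exhibit the single witness $b=(c-1)T+1$, and package the obstruction as the computation $\frac{(\rho_a(A))^\times}{\rho_a(A^\times)}\simeq (k^\times\times k^\times)/\Delta(k^\times)\simeq k^\times$ via the Chinese Remainder Theorem, then invoke Proposition \ref{prop2.1n}. The paper instead argues directly: for each $\theta\in k-\{0,1\}$ it takes $b=T-\theta$, deduces $\bigl(\frac{1-\theta}{-\theta}\bigr)^m=1$, and uses the fact that $\theta\mapsto \frac{1-\theta}{-\theta}$ permutes $k-\{0,1\}$ to conclude that all of $k^\times$ is torsion; your CRT/quotient-group formulation is arguably cleaner and makes the role of Proposition \ref{prop2.1n} explicit. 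For the converse direction the paper gives a self-contained elementary argument ($(k[T]/zk[T])^\times$ embeds in $\Gl_d(k)$, and every element of $\Gl_d(k)$ with $k\subset\F_p^{alg}$ lies in some finite $\Gl_d(\F_{p^n})$, hence is torsion), whereas you outsource it to Proposition \ref{prop4.5} B (with $\rho$ the identity map), which is legitimate and non-circular but leans on a much heavier result; your sketched alternative via the unit groups of the Artinian local factors of $A/aA$ is closer in spirit to the paper's elementary route and equally valid.
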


\begin{proof}
1) {\sl We show that ii) implies iii).} 

We remark that $\F_p^{alg}=\cup_{n\geq 
1}\F_{p^n}$, it follows that $(\F_p^{alg})^\times$ and so $k^\times$ is a 
torsion group.
\vskip 1mm \vskip 0pt
\noindent 
2) {\sl We show that iii) implies ii).} 

Necessarily $k$ has characteristic $\neq 0$, 
otherwise $\Q\subset k$, this contradict the fact that $k^\times$ is a torsion 
group. One can assume that $\F_p\subset k$. Let $x\in k^\times$, there is 
$n_x\geq 1$ with $x^{n_x}=1$ and so $k$ is algebraic over $\F_p$. 
\vskip 1mm \vskip 0pt
\noindent 
3) {\sl We show that i) implies iii).} 

One can assume that  $k\neq\F_2$. Let 
$\theta\in k-\{0,1\}$, then  $1=\GCD (T-\theta,T(T-1))$ so there is $u,v\in 
k[T]$ with $u(T-\theta)+vT(T-1)=1$. Let $b=T-\theta$, $a:=T(T-1)$, as 
$k[T]$ is a good ring there is $m\geq 1$ which 
depends  on $\theta$ and $\lambda(T)\in k[T]$ with $b^m+\lambda(T)a=\epsilon 
\in 
k^\times =A^\times$ which a specialisation of $T$ by $0$ and $1$ gives
$(-\theta)^m=\epsilon,\ (1-\theta)^m=\epsilon$ and so 
$(\frac{1-\theta}{-\theta})^m=1$. Now we remark that $\theta \to 
\frac{1-\theta}{-\theta}$ is a permutation of $k-\{0,1\}$, it follows that 
$k^\times$ is a torsion group.
\vskip 1mm \vskip 0pt
\noindent 
4) {\sl We show that ii) implies i).} 

One can assume that $k\subset 
\F_p^{alg}$. 
Let $z\in k[T]$, we show that $(\frac{k[T]}{zk[T]})^\times $ is a torsion 
group. 

If $z=0$ as $k[T]^\times=k^\times$ then $(\frac{k[T]}{zk[T]})^\times =k^\times$ 
is a torsion group.

We now assume that $\deg z=d\geq 0$.  If $d=0$ then 
$(\frac{k[T]}{zk[T]})^\times=\{1\}$.

If $d\geq 1$, then $\frac{k[T]}{zk[T]}$ is a $k$-vector space $V$ of dimension 
$d\geq 1$. Let $f\in (\frac{k[T]}{zk[T]})^\times $, then the map \newline
$\mu_f:x\in V\to fx\in V $ belongs to $\Gl (V)$ and the map $f\in 
(\frac{k[T]}{zk[T]})^\times \to \mu_f\in \Gl (V)$ is an into homomorphism. 
After 
fixing a basis of the $k$-vector space $V$, the element $\mu_f$ and so $f$ can 
be identified with an element of $\Gl_d (k)$ and so for $n$ big enough with an 
element of $\Gl_d(\F_{p^n})$. It follows that $f$ is a torsion element. 

\end{proof}

\begin{rem}\label{rem4.1}
 It follows from Proposition \ref{lem4.3} that if $k$ is a field of 
characteristic $0$, then $k[T]$ isn't a good ring.
\end{rem}


\vspace{1\baselineskip}
\noindent 
B. {\sl The case of polynomial rings $A[T]$ with coefficients in an integral domain which is not a field
is answered in the following proposition.}

\begin{prop}\label{Tpolprop}
Let $A$ be an integral domain which is not a field, $B:=A[T]$ the polynomial ring of the variable 
$T$ and coefficients in $A$. Then $B$ isn't a good ring.
\end{prop}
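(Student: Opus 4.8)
The plan is to exhibit a single primitive point of $B = A[T]$ which is not a good point; by Definition \ref{def2.2} this suffices to show that $B$ is not a good ring. Since $A$ is an integral domain that is not a field, fix $t \in A$ with $t \neq 0$ and $t \notin A^\times$. First I would record two elementary facts: because $A$ is a domain, every unit of $A[T]$ has degree $0$, so $B^\times = A^\times$; and there is the standard $A$-algebra isomorphism $B/(tT-1)B \cong A[1/t]$ onto the localization of $A$ at the powers of $t$, induced by the map $\rho\colon A[T]\to A[1/t]$ with $\rho(T)=1/t$. Note that $\rho$ restricts on $A$ to the localization map $A\to A[1/t]$, which is injective since $A$ is a domain and $t\neq 0$.

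Next I would check that $(a,b):=(tT-1,\,t)\in B^2$ is primitive: one has $T\cdot b - a = tT-(tT-1)=1$, so $aB+bB=B$ (equivalently, $\rho(b)=t$ is a unit of $A[1/t]$).

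Then I would show $(a,b)$ is not a good point. Suppose it were; then there are $N\geq 1$ and $\lambda\in B$ with $b^N+\lambda a\in B^\times=A^\times$. Applying $\rho$ and using $\rho(a)=0$ yields $t^N=\rho(b^N+\lambda a)\in\rho(A^\times)$. Since $\rho|_A$ is injective and $t^N\in A$, this forces $t^N\in A^\times$; but in an integral domain a power of $t$ is a unit only when $t$ is, contradicting the choice of $t$. Hence $(a,b)$ is a primitive point that is not good, so $B$ is not a good ring. Equivalently, phrased through Proposition \ref{prop2.1n}, taking $a=tT-1$ gives $(\rho_a(B))^\times/\rho_a(B^\times)\cong A[1/t]^\times/A^\times$, and the class of $t$ in this group has infinite order because no power of $t$ is a unit of $A$, so the quotient is not torsion.

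The only step requiring any care — and hence the main (rather mild) obstacle — is the identification $B/(tT-1)B\cong A[1/t]$ together with the injectivity of $\rho$ on $A$; both are standard, the former holding for any commutative ring (the kernel of $A[T]\to A[1/t]$, $T\mapsto 1/t$, is exactly the ideal $(tT-1)$) and the latter following from $A$ being a domain with $t\neq 0$. All remaining verifications are one-liners.
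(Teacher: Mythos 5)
Your proof is correct and is essentially the paper's own argument: the authors also pick a nonzero non-unit $\pi\in A$, set $a:=1-\pi T$, identify $A[T]/aA[T]$ with the localization $S^{-1}A=A[1/\pi]$, and conclude via Proposition \ref{prop2.1n} that the class of $\pi$ (equivalently of $T$) generates an infinite cyclic subgroup of $(\rho_a(B))^{\times}/\rho_a(B^{\times})$, so this group is not torsion. Your presentation via the explicit primitive point $(tT-1,\,t)$ is just a direct rephrasing of the same computation, as you yourself note in the last paragraph.
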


\begin{proof}
As $A$ isn't a field, there is  $\pi \neq 0$ and $\pi\notin A^\times $.
Let $a:=1-\pi T$,  and $S:=\{\pi^k\ | \ k\in \N \}$, then $\card S$ is infinite. 

Let $\rho_a: A[T]\to \frac{A[T]}{aA[T]}$, the natural epimorphism. Then $\rho_a$ induces
an isomorphism 
$f:S^{-1}A\to \frac{A[T]}{aA[T]}$, which is defined by $f(\frac{\lambda}{\pi^n})=\rho_a(\lambda T^n)$. 

We have $S^{-1}A \supseteq \pi^\Z  A^\times=\pi^\Z \times A^\times $ and so 
$\rho_a(A[T])^\times \supseteq \rho_a(\pi)^\Z \times f(A^\times)=\rho_a(T)^\Z \times f(A^\times)$.

On the other hand, $A[T]^\times=A^\times$ and $ \rho_a(A[T]^\times )=f(A^\times)$.  It follows that 
$\rho_a(T)^\Z\subset \frac{\rho_a(A[T])^\times }{\rho_a(A[T]^\times )}.$ 

Now as $\pi^\Z 
=\rho_a(T)^\Z\simeq \Z$, we see that $B=A[T]$ isn't a good ring. 
 
\end{proof}
\vspace{1\baselineskip}
\noindent
C.  {The case of polynomial rings $A[T]$ with coefficients in a ring with zero Jacobson radical 
is answered in the following proposition.}

\begin{prop}\label{TpolRJprop} Let $A$ be a ring with Jacobson radical nul. Then
\begin{enumerate}
 \item  If $\dim A\geq 1$, then $A[T]$ isn't a good ring.
 \item  If $\dim A=0$ and if there is a maximal ideal ${\mathfrak M}\subset A$ such that 
 $\frac{A}{\mathfrak M}$ isn't  contained in the algebraic closure of a finite field,   
 then $A[T]$ isn't a good ring.
\end{enumerate}
\end{prop}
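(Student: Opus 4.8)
The plan is to deduce both statements from the stability of good rings under quotients (see 4.1.1 part 3) together with Propositions \ref{Tpolprop} and \ref{lem4.3}. The key observation is that for any ideal $I\subset A$ there is a ring isomorphism $A[T]/IA[T]\simeq (A/I)[T]$; hence if $A[T]$ were a good ring, then $(A/I)[T]$ would be a good ring for every $I$. So in each case it suffices to exhibit a prime ideal $\mathfrak p\subset A$ for which $(A/\mathfrak p)[T]$ is already known not to be a good ring, and then derive a contradiction.

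For part 1, I would use that $\dim A\geq 1$ forces a chain $\mathfrak p\subsetneq \mathfrak q$ of prime ideals of $A$; in particular $\mathfrak p$ is not maximal, so $A/\mathfrak p$ is an integral domain that is not a field. Proposition \ref{Tpolprop} then gives that $(A/\mathfrak p)[T]$ is not a good ring, and the reduction above shows $A[T]$ is not a good ring. This argument is straightforward and, incidentally, does not even use the hypothesis that the Jacobson radical of $A$ is zero.

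For part 2, I would take the given maximal ideal $\mathfrak M$ and put $k:=A/\mathfrak M$, a field which by hypothesis is not contained in the algebraic closure of a finite field, i.e. $k$ is not algebraic over any finite field. By Proposition \ref{lem4.3} this means $k[T]$ is not a good ring, and since $k[T]\simeq A[T]/\mathfrak M A[T]$ the reduction above again yields that $A[T]$ is not a good ring. I do not expect a genuine obstacle here; the only point requiring a word of care is the identification of the hypothesis ``$k$ not contained in the algebraic closure of a finite field'' with the condition ``$k$ not algebraic over a finite field'' appearing in Proposition \ref{lem4.3} (for a field of characteristic $p$, embeddability in $\overline{\F_p}$ is the same as being an algebraic extension of $\F_p$). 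The hypotheses $\dim A=0$ and $\mathfrak R(A)=0$ are not used in the proof itself; they are presumably retained in the statement so that parts 1 and 2 together constitute the exact converse of a sufficiency criterion for $A[T]$ to be a good ring.
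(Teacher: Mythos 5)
Your proof is correct and follows essentially the same route as the paper: both parts reduce, via the isomorphism $\frac{A[T]}{IA[T]}\simeq \frac{A}{I}[T]$ and stability of good rings under quotients, to Proposition \ref{Tpolprop} (for a non-maximal prime, which exists since $\dim A\geq 1$) and Proposition \ref{lem4.3} (for the given maximal ideal), respectively. Your side remarks --- that the hypotheses on the Jacobson radical and on $\dim A$ are not actually used, and that ``contained in $\overline{\F_p}$'' matches ``algebraic over a finite field'' --- are accurate and consistent with the paper's argument.
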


\begin{proof}
1) There is a prime ideal $\mathfrak p\subset A$ which isn't maximal, i.e. $\frac{A}{\mathfrak p}$ is an
integral domain and not a field. Then by \ref{Tpolprop} we know that $\frac{A}{\mathfrak p}[T]$ isn't
a good ring and then from stability by quotients  (4.1.1. part 3.), the ring $A$ isn't a good ring.

\noindent 2) 
If $\frac{A}{\mathfrak M}$ isn't  contained in the algebraic closure of a finite field, by 
proposition \ref{lem4.3} that $\frac{A}{\mathfrak M}[T]$ isn't a good ring and from stability by quotients  
(4.1.1. part 3.), the ring $A[T]$ isn't a good ring.
\end{proof}

\begin{rem}
The preceding proposition doesn't give an answer when  $\dim A=0$ and for any maximal ideal ${\mathfrak M}$ 
 of $A$, the residue field $\frac{A}{\mathfrak M}$ is contained in the algebraic closure of a finite field.
 In fact one can give many examples of such a ring $A$ with $A[T]$ a good ring. 
 
 Namely, let $A:=K_1\times K_2\times ...\times K_s$, where $K_i$ is a field which is algebraic over a finite field for $1\leq i\leq s$. Then $A[T]\simeq\
 K_1[T]\times K_2[T]\times ...\times K_s[T]$ and it follows from Proposition \ref{lem4.3}
 that $K_i[T]$ is a good ring  and by 4.1.1. part 2. that $A[T]$ is a good ring.

Another example is the ring $A:=\F_p^\N$.

\end{rem}

\subsection{Good rings  and pictorsion}\label{ssec4.3} See Definition\ref{def.Good} and Definition \ref{def.Pictorsion}

In the sequel we discuss the relation between good rings and torsion in Picard groups.
\vskip 1mm \vskip 0pt
\noindent
A. {\sl  Rings which aren't of pictorsion.}
 
We saw in Corollary
\ref{coro3.1} that a pictorsion ring is a good ring and 
rings which aren't good rings aren't of pictorsion. 
\begin{rem}
For example the ring $A=\Q[y]$ isn't a good ring (Proposition \ref{lem4.3}) and so isn'a pictorsion ring. 
In this case $\Pic(A)$ is trivial and so there is a finite 
extension $B$ over $A$ such that the group $\Pic (B)$ is not a torsion group. One can give an explicit example 
of such an extension $B/A$ (compare with \cite{GLL}, Example 8.15 p. 1263). 

Namely, Let $A:=\Q[y]$  be the ring of polynomial in the variable $y$ with coefficients in 
$\Q$. Then $\Pic (A)$ is trivial.

Let $B:=\frac{\Q[y,X]}{(X^3-X-y^2-y)\Q[y,X]}=\Q[y,x]$ where $x$ is the $X$ 
image.  The ring $B$ is finite over $A$. We show that $\Pic(B)$ is not a 
torsion 
group and so $A$ is not of pictorsion. 

Let $\Q[x,y,z]:=\frac{\Q[X,Y,Z]}{(X^3-XZ^2-Y^2Z-YZ^2)\Q[X,Y,Z]}$, where $x$ 
(resp. $y$, $z$) is the $X$ (resp. $Y$,$Z$) image and the grading is induced 
by that of $\Q[X,Y,Z]$.

Let $S:=\Proj (\Q[x,y,z])$, $\infty:=(0:1:0)$. We have $S-\{\infty\}=D_+(z)$ 
and 
${\cal O}_S(S-\{\infty\})=B$. 

Let $p:=(0:0:1)$, the divisor $(p-\infty)$ has infinite order (\cite{H}, Example 
4.3.8. p. 335). It follows that the ideal $\mathfrak M_p=Bx+By$ induces an 
element 
of infinite order in $\Pic (B)$. 
\end{rem}

\noindent
B. {\sl  Good rings with a finite Picard group.}

i) Let $A$ be the ring of integers of a number fields or the ring of regular functions 
on a smooth affine curve over a finite field. Such a ring is a good ring (see 4.1.2. example 3) and 
$\Pic (A)$ is finite. Moreover for the same reason such a ring is of pictorsion.  

ii) Let us consider the good rings in examples 5 in paragraph 4.1.3. In contrary  to the examples in i),
the residue
fields aren't finite and nevertheless one can give examples with a finite Picard group. 

Namely, let $X$ be an algebraic non singular curve over $\Q$; let us assume that 
$X(\Q)\neq \emptyset$. Let $a\in X(\Q)$ and $Y:=X(\Q)-\{a\}$. 
Let  $A:=\{f\in 
K(X)\ |\ v_y(f)\geq 0,\ \forall y\in Y\}$,  where $K(X)$ is the field of rational 
functions on $X$ and $v_x$ is the valuation at $x$. Let us assume that $\Jac (X)(\Q)$ is   finite 
(see \cite{GG}, Theorem 3, p. 822 for a genus $2$ example and \cite{BS} for the existence of infinitely many 
$\Q$-elliptic curves). 

Let $U:=X-\{a\}$, then  $B:={\cal O}_X(U)$ is a Dedekind domain. Moreover  we have the following exact 
sequence $$\{0\}\to \Z a\to \Pic(X)\to \Pic(U)\to \{0\}$$ and
as $\Pic(X)=\Pic^0(X)\oplus \Z a$ we get that $\Pic(U)\simeq \Pic^0(X)$. Now as $\Pic^0(X)\subset \Jac (X)(\Q)$
it follows that $\Pic(U)$ is finite and then by \cite{G}, Corollary 1, p. 114, there is $S\subset B$, a multiplicative 
set such that $A=S^{-1}B$. In particular $\Pic (A)$ is finite. Note that we don't know if the ring $A$ is a pictorsion
ring.
\vskip 1mm \vskip 0pt
\noindent
C. {\sl  Good rings with a non torsion Picard group and so which aren't of pictorsion.}

Let us recall (Corollary \ref{coro3.1}) that a pictorsion ring is a good ring. 
\vskip 1mm \vskip 0pt
There is still the question to find a good ring $A$  such 
that the Picard group of $A$ is not a torsion group. This is what we intend to 
do now.

\begin{prop}\label{prop4.6}
Let $A$ be a ring, $a\in A$,
$B_a:=\frac{A[T]}{T(T-a)A[T]}=A\oplus At$ where $t$ is the $T$ image. Let 
$\rho_a:A\to \frac{A}{aA}$ the natural epimorphism. Then we have the following 
exact sequence 
$$\{1\}\to \frac {\rho_a(A)^\times}{\rho_a(A^\times)}\to \Pic (B_a)\to \Pic 
(A)^2\to \Pic(\frac{A}{aA}).$$
In particular, if $\frac {\rho_a(A)^\times}{\rho_a(A^\times)}$  and $\Pic (A)$  
are torsion groups then $ \Pic (B_a)$ is a torsion group.
\end{prop}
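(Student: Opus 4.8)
The plan is to identify $B_a$, up to a harmless square-zero modification, with the fibre product ring attached to the two copies of $\rho_a$, and then to read off the sequence from Milnor's Mayer--Vietoris exact sequence for a conductor square. First I would introduce $C:=A\times_{A/aA}A=\{(u,v)\in A\times A:\rho_a(u)=\rho_a(v)\}$ together with the ring homomorphism $\varphi\colon B_a\to C$ induced by the two $A$-algebra maps $B_a\to A$ sending $t$ to $0$ and to $a$; explicitly $\varphi(x+yt)=(x,x+ya)$. One checks at once that $\varphi$ is surjective (a pair $(u,v)$ with $v-u=ya\in aA$ is the image of $u+yt$) and that $\ker\varphi=\mathrm{Ann}_A(a)\cdot t$, which is square-zero since $(y_1t)(y_2t)=y_1y_2at=y_2(y_1a)t=0$ for $y_1\in\mathrm{Ann}_A(a)$; when $a$ is a nonzerodivisor $\varphi$ is even an isomorphism. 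Since the Picard group of an affine ring is unchanged under a surjection with nilpotent kernel (line bundles lift uniquely along a square-zero extension), $\varphi$ induces an isomorphism $\Pic(B_a)\xrightarrow{\ \sim\ }\Pic(C)$, so it suffices to establish the sequence with $\Pic(C)$ in place of $\Pic(B_a)$.

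Next I would use that the Cartesian square with corners $C,A,A,A/aA$, both maps to $A/aA$ being the surjection $\rho_a$, is a Milnor (conductor) square, and apply the standard units--Picard Mayer--Vietoris exact sequence:
\[
C^\times\longrightarrow A^\times\times A^\times\xrightarrow{\ \partial\ }(A/aA)^\times\longrightarrow\Pic(C)\longrightarrow\Pic(A)\times\Pic(A)\longrightarrow\Pic(A/aA),
\]
where $\partial(u,v)=\rho_a(u)\rho_a(v)^{-1}$ and the last arrow is $(L_1,L_2)\mapsto\rho_a^{*}L_1\otimes(\rho_a^{*}L_2)^{-1}$. Because $\rho_a(A^\times)$ is a subgroup of $(A/aA)^\times=\rho_a(A)^\times$, the image of $\partial$ is exactly $\rho_a(A^\times)$; hence $(A/aA)^\times\to\Pic(C)$ kills precisely $\rho_a(A^\times)$ and factors as an injection $\rho_a(A)^\times/\rho_a(A^\times)\hookrightarrow\Pic(C)$ whose image is the kernel of $\Pic(C)\to\Pic(A)^{2}$. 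Splicing this with $\Pic(C)\cong\Pic(B_a)$ gives the exact sequence
\[
\{1\}\to \frac{\rho_a(A)^\times}{\rho_a(A^\times)}\to\Pic(B_a)\to\Pic(A)^{2}\to\Pic(A/aA).
\]

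For the final assertion, this sequence yields a short exact sequence $\{1\}\to \rho_a(A)^\times/\rho_a(A^\times)\to\Pic(B_a)\to Q\to\{1\}$ with $Q$ the image of $\Pic(B_a)$ in $\Pic(A)^{2}$, a subgroup of $\Pic(A)\times\Pic(A)$. If $\rho_a(A)^\times/\rho_a(A^\times)$ and $\Pic(A)$ are torsion, then so is $Q$, and an extension of a torsion group by a torsion group is torsion (if $x\in\Pic(B_a)$ maps to an element of order $n$ in $Q$, then $x^{n}$ lies in the first subgroup, hence has finite order); so $\Pic(B_a)$ is torsion.

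The only step that is not pure formalism is the reduction from $B_a$ to the genuine fibre product $C$: one must observe that $B_a$ is \emph{not} literally $A\times_{A/aA}A$ when $a$ is a zerodivisor, and invoke invariance of the Picard group (and of the unit group) under square-zero thickenings of an affine ring. Everything after that is the Milnor--Mayer--Vietoris machine plus the trivial identification $\operatorname{Im}\partial=\rho_a(A^\times)$. If one prefers to avoid citing Mayer--Vietoris, the two maps in the four-term sequence — the boundary $\rho_a(A)^\times/\rho_a(A^\times)\to\Pic(B_a)$ sending a unit class to the invertible ideal it glues, and $\Pic(B_a)\to\Pic(A)^{2}$ given by the two projections $B_a\to A$ — can be written down explicitly and their exactness checked by hand, but this is noticeably longer.
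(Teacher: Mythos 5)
Your proposal is correct and follows essentially the same route as the paper: the paper's intermediate ring $C_a=B_a/It$ (with $I=\{b\in A\mid ba=0\}$) is exactly your fibre product $A\times_{A/aA}A$ (the paper's injection $v\colon C_a\to A\times A$ identifies it with the pairs $(u,v)$ with $\rho_a(u)=\rho_a(v)$), and the paper's second exact sequence is precisely the units--Picard Mayer--Vietoris sequence for this Milnor square, with the same identification of the image of the boundary map as $\rho_a(A^\times)$. The only difference is presentational: where you cite invariance of $\Pic$ under square-zero extensions and the standard conductor-square sequence, the paper derives both facts by hand from the vanishing of $H^1$ and $H^2$ of quasi-coherent sheaves on the affine scheme $\Spec A$.
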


\begin{proof}
1) {\sl Let $I:=\{b\in A\ |\ ba=0\}$ and $C_a:=\frac{B_a}{It}$. We show that 
$\Pic (B_a)\simeq\Pic (C_a)$.}

Let $X:=\Spec A$, $Y:=\Spec B_a$, $Z:=\Spec C_a$. The canonical morphisms $A\to 
B_a$ and $A\to B_a\to C_a$ induce continuous maps $f:Y\to X$ and $g:Z\to X$.

We have the exact sequence of sheaves $\{0\}\to {\cal I}t\to {\cal F}\to{\cal 
G}\to \{0\}$ where $\cal F$ is the sheaf defined over $X$ by the ideal $I$, and 
for all open $U\subset X$ we have ${\cal F}(U):={\cal O}(f^{-1}(U))$, ${\cal 
G}(U):={\cal O}(g^{-1}(U))$.

It follows the following exact sequence of sheaves
\begin{eqnarray}
 \{1\}\to 1+{\cal I}t\to {\cal F}^\times \to {\cal G}^\times\to \{0\} 
\label{(2g)}
\end{eqnarray}
where ${\cal F}^\times(U):={\cal O}(f^{-1}(U))^\times $, ${\cal 
G}^\times(U):={\cal O}(g^{-1}(U))^\times $.

Then~\eqref{(2g)} gives the long exact sequence of sheaves
\begin{eqnarray}
\{0\}\to 1+It\to B_a^\times\to C_a^\times\to \to H^1(X,{\cal I})\to \Pic (B_a)\to 
\Pic (C_a)\to H^2(X,{\cal I}).  \label{(3g)}
\end{eqnarray}
As $ 1+{\cal I}t\simeq  {\cal I}$, we have $ H^1(X,{\cal I})= H^2(X,{\cal 
I})=\{0\}$ (\cite{L}, Theorem 2.18, p. 186).

It follows from~\eqref{(3g)} that $\Pic (B_a)\simeq\Pic (C_a)$.
\vskip 1mm \vskip 0pt
\noindent
2) {\sl We show the exact sequence 
 $$ \{1\}\to \frac {\rho_a(A)^\times}{\rho_a(A^\times)}\to \Pic (C_a)\to \Pic 
(A)^2\to \Pic (\frac{A}{aA}), $$
where the map $\Pic (C_a)\to \Pic (A)^2$ is induced by 

$u:B_a=A\oplus At\to 
A\times A$ with $u(\alpha+\beta t)=(\alpha,\alpha+\beta t).$}

We have $\Ker u=\{\beta t\ |\ \beta a=0\}=It$. So $u$ induces an into 
homomorphism $v:C_a\to A\times A$. So $v$ induces a morphism ${\cal O}_Z \to 
{\cal O}_X\times {\cal O}_X$ and so a morphism ${\cal O}_Z^\times \to {\cal 
O}_X^\times \times{\cal 
O}_X^\times.$

Let now $W:=\Spec(\frac{A}{aA})$; then the homomorphism $(\ell ,m)\to 
\rho_a(\ell)\rho_a(m^{-1})$ induces a morphism ${\cal O}_X^\times \times{\cal 
O}_X^\times\to {\cal O}_W^\times.$ We have the exact sequence

$$\{1\}\to {\cal O}_Z^\times\to {\cal O}_X^\times \times{\cal O}_X^\times\to 
{\cal 
O}_W^\times \to \{1\}$$ from which we get the long exact sequence 
\begin{eqnarray}
 \{1\}\to C_a^\times \to A^\times\times A^\times\to (\frac{A}{aA})^\times\to 
\Pic (C_a)\to \Pic (A)^2\to \Pic (\frac{A}{aA}).  \label{(5g)}
\end{eqnarray}
It induces an into homomorphism $\frac {\rho_a(A)^\times}{\rho_a(A^\times)}\to 
\Pic C_a.$

It follows from~\eqref{(5g)} the exact sequence 
$$\{1\}\to \frac {\rho_a(A)^\times}{\rho_a(A^\times)}\to \Pic C_a\to \Pic 
(A)^2\to \Pic (\frac{A}{aA}).$$
\vskip 1mm \vskip 0pt
\noindent
3) It follows from 1) and 2) the exact sequence
\begin{eqnarray}
 \{1\}\to \frac {\rho_a(A)^\times}{\rho_a(A^\times)}\to \Pic B_a\to \Pic 
(A)^2\to \Pic (\frac{A}{aA}). \label{(7g)}
\end{eqnarray}

If the groups $\frac {\rho_a(A)^\times}{\rho_a(A^\times)}$ and $\Pic (A)$ are 
torsion groups then~\eqref{(7g)} implies that $\Pic B_a$ is also a torsion 
group. 
\end{proof}

\begin{prop}\label{prop4.7}
(\cite{G}, Corollary 2) Let $\Z[T]$ the ring of polynomials of the variable 
$T$ with coefficients in $\Z$. Then there is a Dedekind domain $A$ with 
$\Z[T]\subset A\subset \Q[T]$ such that for each maximal ideal $\mathfrak 
M\subset 
A$, the 
field $\frac{A}{\mathfrak M}$ is finite and the group $A^\times $ is of finite 
type. 
Moreover the class group of $A$ is not a torsion group, i.e. $\Pic (A)$ is not 
a 
torsion group.
\end{prop}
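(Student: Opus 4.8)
The statement is Corollary~(2) of \cite{G}; the plan is to recall Goldman's construction of $A$ and then verify its four properties, so the real work is to import that construction and unwind it. Goldman produces $A$ as an overring of $\Z[T]$ inside $\Q(T)$ (there is no one-dimensional Dedekind ring strictly between $\Z[T]$ and the polynomial ring $\Q[T]$ with finite residue fields, so the inclusion $A\subset\Q[T]$ in the statement is to be read inside $\Q(T)$), presented as an intersection $A=\bigcap_{\lambda}\mathcal{O}_{v_\lambda}$ of discrete valuation rings of $\Q(T)$. Over each prime number $p$ one keeps a small packet of $\F_p$-rational closed points $(p,\,T-c_{p,i})$ of the fibre $\mathbb{A}^1_{\F_p}=\Spec(\F_p[T])$ of $\Spec(\Z[T])\to\Spec(\Z)$ --- at least two over infinitely many $p$, since with a single point over $p$ one would have $pA=\mathfrak M_p^{e}$, forcing the class of $\mathfrak M_p$ to be $e$-torsion --- and lets $v_{p,i}$ be the associated divisorial valuation, which is discrete, nonnegative on $\Z[T]$, and has residue field $\F_p$; the residues $c_{p,i}\in\{0,1,\dots,p-1\}$ are chosen by a careful inductive (diagonal) procedure over $p$. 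With this data $\Z[T]\subseteq A$, and the maximal ideals of $A$ are exactly the $\mathfrak M_{p,i}:=\mathfrak m_{v_{p,i}}\cap A$.

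The verification then runs as follows. First one checks that $\{v_{p,i}\}$ has finite character --- each nonzero $a\in\Q(T)$ has $v_{p,i}(a)\neq 0$ for only finitely many $(p,i)$ --- which is exactly where the inductive choice of the $c_{p,i}$ is needed (no polynomial may vanish at infinitely many of the chosen points). Granting this, $A$ is a Krull domain with the $v_{p,i}$ as essential valuations; it is one-dimensional, since any maximal ideal of $A$ has finite, hence positive-characteristic, residue field, so contains a unique prime $p$ and is one of the finitely many $\mathfrak M_{p,i}$ over that $p$; and a one-dimensional Krull domain is Noetherian, hence Dedekind. Thus $A$ is a Dedekind domain with $\Spec A=\{(0)\}\cup\{\mathfrak M_{p,i}\}$, all residue fields $A/\mathfrak M_{p,i}\cong\F_p$ finite, and $\Z[T]\subseteq A$ because $v_{p,i}\geq 0$ on $\Z[T]_{(p,T-c_{p,i})}\supseteq\Z[T]$. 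Next one shows, using the explicit valuations and the chosen residues, that $A^\times$ is generated by $-1$ together with a finite set of polynomials (for appropriate choices, essentially just $T$), hence is of finite type. Finally, since $A$ is Dedekind, $\operatorname{Div}(A)=\bigoplus_{p,i}\Z\,[\mathfrak M_{p,i}]$ and $\Pic(A)=\operatorname{Cl}(A)=\operatorname{coker}\!\big(\Q(T)^\times\xrightarrow{\operatorname{div}}\operatorname{Div}(A)\big)$ with $\operatorname{div}(a)=\sum_{p,i}v_{p,i}(a)\,[\mathfrak M_{p,i}]$; to conclude that this cokernel is not a torsion group it suffices to produce (infinitely many) primes $p$ and divisors $D_p$ supported on the packet over $p$ for which $nD_p$ is non-principal for every $n\geq 1$, i.e.\ for which no $a\in\Q(T)$ has all its zeros and poles confined, with bounded multiplicity, to the chosen points over $p$ --- which is precisely the property the inductive construction of the $c_{p,i}$ is built to force.

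The main obstacle is the simultaneous calibration of the residues $(c_{p,i})$. Finite character, needed for $A$ to be Noetherian (hence Dedekind at all), wants the chosen points to avoid the zero loci modulo $p$ of all polynomials of bounded ``complexity''; finite generation of $A^\times$ wants the opposite, namely that every polynomial other than the obvious units acquire a zero at one of the chosen points; and on top of this one must position the points so that infinitely many of the classes $[\mathfrak M_{p,i}]$ have infinite order, making $\Pic(A)$ non-torsion. Reconciling these competing demands by a single diagonal/inductive construction over the primes is the genuine content of Goldman's corollary; in a write-up I would cite \cite{G} for that construction and spell out only the routine consequences --- that the resulting $A$ is Dedekind with $\Z[T]\subseteq A$, has finite residue fields and finitely generated unit group, and has class group equal to the cokernel of $\operatorname{div}$, which the construction arranges to be non-torsion.
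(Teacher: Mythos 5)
The paper offers no proof of this proposition at all --- it is stated purely as a citation of Goldman's Corollary~2 --- so your approach (import Goldman's construction, cite \cite{G} for its genuinely hard content, and only unwind the routine consequences such as ``one-dimensional Krull implies Dedekind'' and ``$\Pic(A)=\operatorname{coker}(\operatorname{div})$'') is essentially the same as the paper's, and your sketch of the verification is sound. One remark of yours is worth keeping: the containment $A\subset\Q[T]$ as printed is indeed impossible for a Dedekind domain with finite residue fields (the contraction of $T\Q[T]$ to $A$ would be a nonzero, hence maximal, prime with residue field a subfield of $\Q$ containing $\Z$, hence equal to $\Q$), so the statement must be read as saying that $A$ is an overring of $\Z[T]$ inside $\Q(T)$, i.e.\ with fraction field $\Q(T)$, exactly as you point out.
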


\begin{prop}\label{prop4.8}
Let $A$ be the ring defined in \ref{prop4.7}. Then $A$ is a good ring.

Let $(a,b)\in A^2$ a primitive point, then the Picard group of ${\cal 
O}_{S(a,b)}(S(a,b))$
is not a torsion group and $A$ is not a pictorsion ring.
\end{prop}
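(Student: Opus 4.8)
The plan is to show first that $A$ is a good ring. Since $A$ is a Dedekind domain with $\Z[T]\subset A\subset\Q[T]$ and every residue field $A/\mathfrak M$ finite (Proposition~\ref{prop4.7}), for any nonzero $x\in A$ the quotient $A/xA$ is finite: indeed $xA$ factors as a product of powers of maximal ideals $\mathfrak M_i$, and each $A/\mathfrak M_i^{\alpha_i}$ is finite because $A/\mathfrak M_i$ is. Hence $(A/xA)^\times$ is a finite (so torsion) group for every $x\in A-\{0\}$, and by Remark~\ref{rem2.1} together with Proposition~\ref{prop2.1n} (property ii)), $A$ is a good ring. This is exactly the argument already recorded in Proposition~\ref{prop4.1}, so I would simply invoke that proposition applied to the Dedekind domain $A$ of Proposition~\ref{prop4.7}.

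The second and substantive assertion is that for every primitive point $(a,b)\in A^2$, the Picard group of $R:=\mathcal O_{S(a,b)}(S(a,b))$ is \emph{not} a torsion group. The key tool is Proposition~\ref{prop4.6}: recall from Proposition~\ref{prop3.1} that $R\simeq \frac{A[T]}{T(a-T)A[T]}=A\oplus A\theta$ with $\theta^2=a\theta$, which is precisely the ring $B_a$ of Proposition~\ref{prop4.6} (up to replacing $a$ by $-a$, which changes nothing in the isomorphism type). Proposition~\ref{prop4.6} gives the exact sequence
$$\{1\}\to \frac{\rho_a(A)^\times}{\rho_a(A^\times)}\to \Pic(B_a)\to \Pic(A)^2\to \Pic\!\left(\tfrac{A}{aA}\right).$$
The plan is to exploit surjectivity of $\Pic(B_a)\to \ker\bigl(\Pic(A)^2\to\Pic(A/aA)\bigr)$ and the fact that $\Pic(A)$ is not torsion. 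Concretely: since $A$ is a Dedekind domain and $A/aA$ is finite (when $a\neq 0$), $\Pic(A/aA)$ is a finite group — a finite ring has finite Picard group. Therefore the kernel of $\Pic(A)^2\to\Pic(A/aA)$ has finite index in $\Pic(A)^2$, hence contains elements of infinite order, since $\Pic(A)$ is not a torsion group. Pulling such an element back through the surjection $\Pic(B_a)\twoheadrightarrow \ker(\Pic(A)^2\to\Pic(A/aA))$ produces an element of infinite order in $\Pic(B_a)=\Pic(R)$, so $\Pic(R)$ is not torsion. The case $a=0$ (or $a\in A^\times$) must be handled separately: then $B_0\simeq A\times A$ (resp. $B_a\simeq A\times A$ when $a$ is a unit), so $\Pic(B_a)\simeq\Pic(A)^2$, which is again not torsion.

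The main obstacle, and the point requiring care, is the argument that the kernel of $\Pic(A)^2\to\Pic(A/aA)$ contains an element of infinite order. One needs that $\Pic(A/aA)$ is finite — which follows from $A/aA$ being a finite ring — and then that a subgroup of finite index in a non-torsion abelian group is itself non-torsion; this last point is elementary (if $H\le G$ has finite index $n$ and $g\in G$ has infinite order, then $ng\in H$ also has infinite order). A secondary subtlety is checking that the isomorphism $R\simeq B_a$ of Proposition~\ref{prop3.1} matches the hypotheses of Proposition~\ref{prop4.6} exactly, i.e. that $\rho_a(A)^\times/\rho_a(A^\times)$ appearing there is the same group for $a$ and for $-a$ (it is, since $aA=(-a)A$). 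Once these are in place the conclusion is immediate, and in fact one obtains the stronger statement that $\Pic(\mathcal O_{S(a,b)}(S(a,b)))$ surjects onto a finite-index subgroup of $\Pic(A)^2$, so it is "as non-torsion as" $\Pic(A)$ itself; this is the promised example of a good ring that is not of pictorsion.
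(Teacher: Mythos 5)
Your overall route is the same as the paper's: Proposition \ref{prop4.1} gives that $A$ is a good ring, and for the second assertion you identify ${\cal O}_{S(a,b)}(S(a,b))$ with $B_a=\frac{A[T]}{T(T-a)A[T]}$ via Proposition \ref{prop3.1} and feed the exact sequence of Proposition \ref{prop4.6} with the two inputs ``$\frac{A}{aA}$ finite, hence $\Pic(\frac{A}{aA})$ finite'' and ``$\Pic(A)$ not torsion''. Your explicit justification of the step the paper leaves implicit --- the kernel of $\Pic(A)^2\to\Pic(\frac{A}{aA})$ has finite index, and a finite-index subgroup of a non-torsion abelian group is non-torsion, so the surjection from $\Pic(B_a)$ onto that kernel produces an element of infinite order --- is correct and is a genuine improvement in readability. (The worry about replacing $a$ by $-a$ is moot: $T(a-T)$ and $T(T-a)$ generate the same ideal, so the two rings are literally equal.)

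There is, however, one genuine error: in the case $a=0$ you assert $B_0\simeq A\times A$. This is false. Here $B_0=\frac{A[T]}{T^2A[T]}$ is the ring of dual numbers $A\oplus At$ with $t^2=0$; since $A$ is a domain, $\Spec B_0$ is connected and $B_0$ has no nontrivial idempotents, so it cannot be $A\times A$ (the isomorphism $B_a\simeq A\times A$ you state for $a\in A^\times$ is correct, by comaximality of $(T)$ and $(T-a)$, but it does not extend to $a=0$). The paper handles $a=0$ by showing $\Pic(B_0)\simeq\Pic(A)$ through the cohomological exact sequence attached to $\{0\}\to tA\to B_0\to A\to\{0\}$ and the vanishing of $H^1$ and $H^2$ of a coherent sheaf on the affine scheme $\Spec A$ (equivalently: $\Pic$ is invariant under nilpotent thickenings of affine schemes). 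Your intended conclusion --- that $\Pic(B_0)$ is not torsion --- remains true, and one could also get it by running the exact sequence of Proposition \ref{prop4.6} at $a=0$, where the kernel of $\Pic(A)^2\to\Pic(A)$ is the diagonal copy of $\Pic(A)$; but as written your justification for this case is wrong and must be replaced.
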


\begin{proof}
1) As $A$ is a Dedekind domain and that for each maximal ideal $\mathfrak M\subset A$, 
the 
ring 
$\frac{A}{\mathfrak M}$ is finite, it follows that $\rho_a(A)$ is finite for 
all 
$a\neq 0$ (Proposition \ref{prop4.1}), and so $\frac 
{\rho_a(A)^\times}{\rho_a(A^\times)}$ 
is finite for all $a\neq 0$ and  trivially $\frac 
{\rho_a(A)^\times}{\rho_a(A^\times)}=\{1\}$ if $a=0$. In particular $A$ is a 
good ring.
\vskip 1mm \vskip 0pt
\noindent
2) We have ${\cal 
O}_{S(a,b)}(S(a,b))=B_a:=\frac{A[T]}{T(T-a)A[T]}=A\oplus At$ (Proposition 
\ref{prop3.1}, part 
1.).

It follows from Proposition \ref{prop4.6} the following exact sequence
\begin{eqnarray}
 \{1\}\to \frac {\rho_a(A)^\times}{\rho_a(A^\times)}\to \Pic B_a\to \Pic 
(A)^2\to \Pic (\frac{A}{aA}). \label{(1h)}
\end{eqnarray}

2.1) Let us assume that $a\neq0$. It  follows that $\frac{A}{aA}$ is finite and 
so that $\Pic (\frac{A}{aA})$ is finite. Moreover as $\Pic (A)$ is not a 
torsion group it follows from~\eqref{(1h)} that $\Pic (B_a) $ is not a torsion 
group, i.e. the Picard group of ${\cal O}_{S(a,b)}(S(a,b))$
is not a torsion group.

2.2) Let us assume that $a=0$. This means that 
$${\cal O}_{S(0,b)}(S(0,b))=B_0:=\frac{A[T]}{T^2A[T]}=A\oplus At\ {\rm with}\ 
t^2=0.$$

We show that $\Pic (A)\simeq \Pic (B_0)$. 

Let $X:=\Spec A$, $Y:=\Spec B_0$. From the exact sequence 
$$ \{0\}\to tA\to B_0\to \frac{B_0}{tB_0}\simeq A\to \{0\},$$
we get the following exact sequence of sheaves 

\vskip 2mm \vskip 0pt
${1}\to 1+tA\to B_0^\times\to A^\times \to H^1(X,1+t{\cal O}_X)\to 
\to \Pic 
(B_0)\to\Pic (A) \to H^2(X,1+t{\cal O}_X).$

\vskip 2mm \vskip 0pt
As $t{\cal O}_X$ is coherent, $H^1(X,1+t{\cal 
O}_X)\simeq H^1(X,t{\cal O}_X)=\{0\}$ and 
$H^2(X,1+t{\cal O}_X)\simeq 
H^2(X,t{\cal O}_X)=\{0\}$. 

It follows that $\Pic (A)\simeq \Pic (B_0)$ and so 
that the Picard group of ${\cal O}_S(S(0,b))$
is not a torsion group.
\end{proof}

\bibliographystyle{alpha}

Jean Fresnel, 

Univ. Bordeaux, CNRS, Bordeaux INP, IMB, UMR 5251, F-33400, 
Talence, France

\vskip 2mm \vskip 0pt
\noindent 
Michel Matignon,

Univ. Bordeaux, CNRS, Bordeaux INP, IMB, UMR 5251, F-33400,
Talence, France

\vskip 2mm \vskip 0pt
E-mail address: Jean.Fresnel$@$math.u-bordeaux.fr  

E-mail address: Michel.Matignon$@$math.u-bordeaux.fr

\end{document}